  \def\MR#1{}
\newtheorem{Theorem}{Theorem}[section]
\newtheorem{Lemma}[Theorem]{Lemma}
\newtheorem{Proposition}[Theorem]{Proposition}
\newtheorem{Remark}[Theorem]{Remark}
\newtheorem{Definition}[Theorem]{Definition}
\numberwithin{equation}{section}
\def\be{\begin{equation}}
	\def\ee{\end{equation}}
\def\ben{\begin{eqnarray}}
	\def\een{\end{eqnarray}}
\newcommand{\ncom}{\newcommand}
\ncom{\n}{\normalfont}
\ncom{\Lc}{\mathcal}
\ncom{\wt}{\widetilde}
\newcommand{\vertiii}[1]{{\left\vert\kern-0.25ex\left\vert\kern-0.25ex\left\vert #1 
		\right\vert\kern-0.25ex\right\vert\kern-0.25ex\right\vert}}
\long\def\/*#1*/{}
\title[Error estimates for Burgers' equation using deep learning method]{Error estimates for viscous Burgers' equation using deep learning method}
\date{\today}
\author{{Wasim Akram$^\dag$, Sagar Gautam$^\dag$, Deepanshu Verma$^\ddag$}\and{Manil T. Mohan$^\dag$}}
\thanks{$\dag$ Department of Mathematics, Indian Institute of Technology Roorkee, Uttarakhand, 247667, India, Email- {\normalfont{ wakram2k11@gmail.com; sagargautamkm@gmail.com; maniltmohan@ma.iitr.ac.in}}\\
$\ddag$ Department of Mathematical and Statistical Sciences, Clemson University, Clemson, 29631, USA, Email- {\normalfont {dverma@clemson.edu}}\\
Wasim Akram is supported by NBHM (National Board of Higher Mathematics, Department of Atomic Energy) postdoctoral fellowship, No. 0204/16(1)(2)/2024/R\&D-II/10823. }
\renewcommand{\tocsection}[3]{%
	\indentlabel{\@ifnotempty{#2}{\bfseries\ignorespaces#1 #2\quad}}\bfseries#3}
\renewcommand{\tocsubsection}[3]{%
	\indentlabel{\@ifnotempty{#2}{\ignorespaces#1 #2\quad}}#3}
\newcommand\@dotsep{4.5}
\def\@tocline#1#2#3#4#5#6#7{\relax
	\ifnum #1>\c@tocdepth 
	\else
	\par \addpenalty\@secpenalty\addvspace{#2}%
	\begingroup \hyphenpenalty\@M
	\@ifempty{#4}{%
		\@tempdima\csname r@tocindent\number#1\endcsname\relax
	}{%
		\@tempdima#4\relax
	}%
	\parindent\z@ \leftskip#3\relax \advance\leftskip\@tempdima\relax
	\rightskip\@pnumwidth plus1em \parfillskip-\@pnumwidth
	#5\leavevmode\hskip-\@tempdima{#6}\nobreak
	\leaders\hbox{$\m@th\mkern \@dotsep mu\hbox{.}\mkern \@dotsep mu$}\hfill
	\nobreak
	\hbox to\@pnumwidth{\@tocpagenum{\ifnum#1=1\bfseries\fi#7}}\par
	\nobreak
	\endgroup
	\fi}
\renewcommand\csname r@tocindent0\endcsname{0pt}
\def\l@subsection{\@tocline{2}{0pt}{2.5pc}{5pc}{}}
\begin{document}
	\begin{abstract}
The article focuses on error estimates as well as stability analysis of deep learning methods for stationary and non-stationary viscous Burgers equation in two and three dimensions. The local well-posedness of homogeneous boundary value problem for non-stationary viscous Burgers equation is established  by using semigroup techniques and fixed point arguments. By considering a suitable approximate problem and deriving appropriate energy estimates, we prove the existence of a unique strong solution. Additionally, we extend our analysis to the global well-posedness of the non-homogeneous problem. For both the stationary and non-stationary cases, we derive explicit error estimates in suitable Lebesgue and Sobolev norms by optimizing a loss function in a Deep Neural Network approximation of the solution with fixed complexity. Finally, numerical results on prototype systems are presented to illustrate the derived error estimates.
\end{abstract}
	\maketitle
	\pagenumbering{arabic}

\noindent \textbf{Keywords.} Deep Neural Networks $\cdot$ Viscous Burgers equation $\cdot$ Error estimates $\cdot$ Non-homogeneous boundary value problems  $\cdot$ PINNs.\\
\noindent \textbf{MSC Classification (2020).} 65N15  $\cdot$ 68T07  $\cdot$ 35A01  $\cdot$ 35A02. 


\section{Introduction}
The Burgers equation is a fundamental partial differential equation (PDE) that arises in various fields, including fluid dynamics, gas dynamics, traffic flow, and turbulence modeling. Serving as a simplified model for the more complex Navier-Stokes equations, it provides valuable insights into nonlinear advection-diffusion processes. The study of the Burgers equation in two and three dimensions is essential for understanding diffusion-driven wave dynamics, shock formation, and wavefront propagation which are key phenomena in real-world physical systems.

\subsection{Motivation}
There are various numerical methods for solving PDEs, including the finite difference method, finite element method, and finite volume method. However, these traditional approaches often come with limitations, such as high computational cost and long execution times. In recent years, machine learning methods, particularly deep neural networks (DNNs), have shown promise in addressing these challenges. Among these, Physics-Informed Neural Networks (PINNs) \cite{KDKS19} have gained significant attention for solving PDEs efficiently. One of the key advantages of PINNs is their ability to approximate a PDE's solution without requiring a discretized grid. This is achieved by training a neural network to satisfy both the PDE and the associated boundary/initial conditions.

Deep learning \cite{HighhamDNN} is a branch of machine learning that focuses on developing models inspired by the structure and functioning of the human brain. It builds on artificial neural networks to create systems capable of interpreting and learning from data. Multi-layer perceptrons, which consist of multiple hidden layers, are commonly used in deep learning. Figure~\ref{fig:algo} provides an illustration of a four-layer neural network used for the PINN approximation of Burgers equation. In this setup, the network is supervised, meaning it requires a “teacher” to guide it by providing the desired outputs. The system constructs high-level representations by combining simpler, lower-level features, forming more abstract concepts. This hierarchical architecture consists of an input layer, several hidden layers, and an output layer, with connections existing only between neighboring layers. Each layer can act as a logistic regression model, enabling the network to develop complex concepts from simpler building blocks, making deep learning a powerful and flexible tool for data interpretation.

A typical dense neural network computes layer-by-layer transformations for a given input $x$:
$$
y_2(x) = \sigma(W_1 x + b_1),\quad  
y_3(y_2) = \sigma(W_2 y_2 + b_2), \dots
$$
leading to the final output:
$$
u_\theta(x) = y_{n+1}(y_n(\dots y_2(x))).
$$
Here, $ \theta = \{W_1, W_2, \dots, W_n, b_1, \dots, b_n\} $ represents all the weights and biases, and $\sigma$ is a nonlinear activation function and $n$ denote the number of layers. The objective of neural network training is to ``learn" or optimize the parameters $\theta$ by minimizing a chosen loss function. 

Throughout the article, $\mathcal{F}_N$ denote the class of functions representable by a deep neural network of complexity $N$, which constitutes a finite-dimensional subspace of functions over the bounded domain $\Omega$.

The primary goal of this paper is to provide a mathematically rigorous analysis of the PINN-approximated solution, including error estimates and stability analysis, for the viscous Burgers equations in two and three dimensions.   While hyperparameter tuning and exploring various neural network architectures are important aspects of solving PDEs with neural networks, they are not the focus here. In particular, we use smooth activation functions such as the sigmoid ($\sigma(x) = \frac{e^x}{e^x + 1}$), or the hyperbolic tangent $\tanh$ ($\tanh(x)= \frac{e^x -e^{-x}}{e^x+e^{-x}})$. Our approach is similar in spirit to the probabilistic error analysis for machine learning algorithms applied to the Black-Scholes equations \cite{BiswasDNN-NSE}. We examine two settings: the stationary case, which serves to establish fundamental ideas and illustrate our methodology, and the non-stationary case, which is the primary focus of this work. A detailed summary of our key contributions is provided in Section~\ref{sec:contr}.

\begin{figure}
\begin{tikzpicture}
	\draw[fill=purple!10] (-0.5,-2.7) rectangle (9.8,2.7);
	\tikzstyle{neuron} = [circle, draw=black, fill=orange!50, minimum size=20pt, inner sep=0pt]
	\tikzstyle{input neuron} = [neuron, fill=blue!20];
	\tikzstyle{output neuron} = [neuron, fill=green!30];
	\tikzstyle{hidden neuron} = [neuron, fill=orange!50];
	\tikzstyle{annot} = [text width=4em, text centered];
	\tikzstyle{derivative} = [neuron, fill=green!50];

	\node[input neuron] (I-1) at (0,1) {$x_1$};
	\node[input neuron] (I-2) at (0,0) {$x_2$};
	\node[input neuron] (I-3) at (0,-1) {$t$};
	
	\foreach \name / \y in {1,2,...,5}
	\path[yshift=0.5cm]
	node[hidden neuron] (H1-\name) at (2.5,2.5-\y) {$y_1^{(\name)}$};
	
	\foreach \name / \y in {1,2,...,5}
	\path[yshift=0.5cm]
	node[hidden neuron] (H2-\name) at (5,2.5-\y) {$y_2^{(\name)}$};
	
	\foreach \name / \y in {1,2,...,5}
	\path[yshift=0.5cm]
	node[hidden neuron] (H3-\name) at (7.5,2.5-\y) {$y_3^{(\name)}$};
	
	\node[output neuron] (O) at (9.5,0) {$u_\theta$};
	
	\node[derivative] (D-1) at (11,2) {$ u_\theta$};
	\node[derivative] (D-2) at (11,0.7) {$u_{\theta_t}$};
	\node[derivative] (D-3) at (11,-0.6) {$u_{\theta_{x_i}}$};
	\node[derivative] (D-4) at (11,-2) {$\Delta u_\theta$};
	
	\foreach \source in {1,2,3}
	\foreach \dest in {1,2,...,5}
	\path (I-\source) edge (H1-\dest);
	
	\foreach \source in {1,2,...,5}
	\foreach \dest in {1,2,...,5}
	\path (H1-\source) edge (H2-\dest)
	(H2-\source) edge (H3-\dest);
	
	\foreach \source in {1,2,...,5}
	\path (H3-\source) edge (O);
	
	\foreach \source in {1}
	\foreach \dest in {1,2,3,4}
	\path (O) edge (D-\dest);
	
	\node[rotate=90, draw, rectangle, text width=5cm, align=center, fill=yellow!30] (Eq) at (13.5, 0) {Calculate loss = \\ $\displaystyle \| u_{\theta_t} -\nu \Delta u_\theta +\sum_{i=1}^d u_\theta u_{\theta_{x_i}}-f\|^2$ \\ $+\|u_\theta|_\Gamma\|^2+\|u_\theta(0)-u_0\|^2$ };

	\foreach \source in {1,2,...,4}
	\path (D-\source) edge (Eq);
	
	\node[draw, rectangle, text width=3cm, align=center, fill=blue!10] (It) at (9,-3.5) {loss$<\epsilon$ or \\ $\ge$ Max iteration?};
	
	\draw[] (It) -- (5,-3.5);
	\draw[->] (5,-3.5) -- (5,-2.7);
	\node at (6.2,-3.7) {No};
	\draw[->] (13,-2.7) -- ++(0,0) |- (It);
	\node at (9.5,-5) {Yes};
	\draw[->] (9,-4) -- ++(0,0) |- (11.5,-5.5);
	\node at (12,-5.5) {Done};

	\node[annot, above of=H1-1, node distance=1.2cm] {Hidden Layer 1};
	\node[annot, above of=H2-1, node distance=1.2cm] {Hidden Layer 2};
	\node[annot, above of=H3-1, node distance=1.2cm] {Hidden Layer 3};
	
\end{tikzpicture}
\caption{An illustration of the PINN algorithm for solving a PDE, with the loss function specifically formulated for the Burgers equation.} \label{fig:algo}
\end{figure}
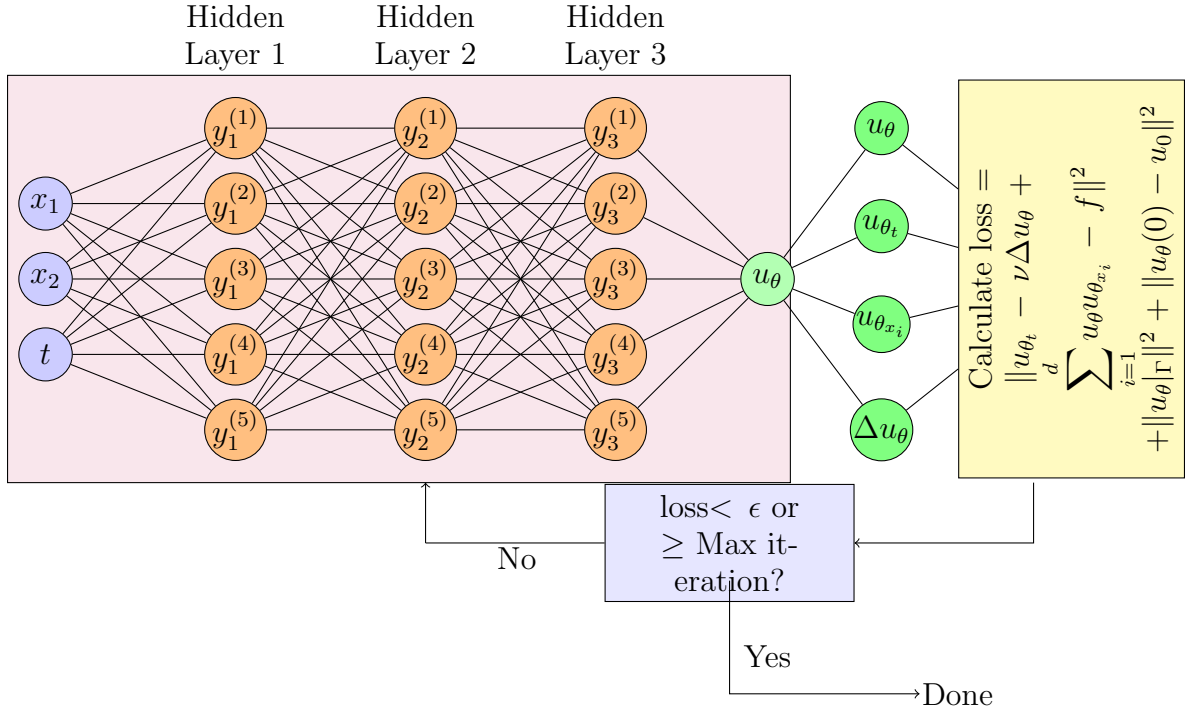

\subsection{Literature survey} 
Given the extensive applications of the Burgers equation, numerous numerical methods have been developed to solve it. The finite element method, in particular, has been widely utilized, as documented in studies such as \cite{AKR,Cald81, PanyNN07, AKPNN08, Dogan04, YChen19} and references therein. Error estimates for the one-dimensional Burgers equation were derived and verified through numerical experiments in \cite{PanyNN07}. The study in \cite{AKPNN08} provides an optimal error estimate accompanied by a comprehensive numerical analysis for the one-dimensional case. Additionally, \cite{YChen19} applied a weak Galerkin finite element method to achieve optimal-order error estimates for the one-dimensional Burgers equation, examining both semi-discrete and fully discrete systems, with numerical verification provided.

\medskip 
The finite element method is a standard approach for solving PDEs, while the success of deep neural networks in various approximation tasks has led to the development of PINNs for this purpose. PINNs and their variants have shown strong capabilities in approximating a wide range of PDEs. However, to date, PINNs and the finite element method have primarily been explored independently \cite{GKLSFEMPINN,Edu25JCAM}. There are vast literature available where PINN is used to solve partial differential equations, for example, see \cite{BiswasDNN-NSE,SMishMC,SMishACT,SMishIMA,KDKS19}, and references their in. The work \cite{SMishMC} established error bounds for ReLU neural network approximations of parametric hyperbolic scalar conservation laws, including generalization error bounds that depend on training error, sample size, and network size. The work \cite{BonIMA} presents a rigorous \emph{a priori} analysis of collocation methods, including PINNS, for elliptic PDEs, establishing optimal recovery rates under Besov space assumptions and proposing consistent loss functions that provide sharp error control and improved numerical performance.
 The authors in \cite{SMishIMA} provided error bounds for approximating the incompressible Navier-Stokes equations with PINN, proving that the residual error can be minimized with $\tanh$ networks, and that total error depends on training error, network size, and quadrature points.

\subsection{Contributions}\label{sec:contr}
This work provides a rigorous mathematical foundation for PINNs applied to stationary and non-stationary viscous Burgers equations in two and three dimensions. Our main contributions are:

\begin{itemize}
    \item \textbf{Sobolev space DNN approximation theory.} We extend existing neural network approximation results from $C^k$ functions to $H^m(\Omega) \cap H^1_0(\Omega)$, proving that deep neural networks can approximate Sobolev functions with arbitrarily small error in the Sobolev norm.
    
    \item \textbf{Well-posedness results.} We establish explicit solvability for both stationary and time-dependent viscous Burgers equations under homogeneous and non-homogeneous boundary conditions, covering weak and strong solutions.
    
    \item \textbf{Error estimates and stability.} We derive sharp bounds linking the residual error to the error of the solution, quantify the impact of boundary regularity and prove stability results showing that two PINN solutions remain close when their respective input data are close.
    
    \item \textbf{Constructive existence of PINN approximants.} For both stationary and non-stationary cases, we prove the existence of neural networks achieving arbitrarily small residual and solution errors in the Sobolev norm.
    
    \item \textbf{Numerical validation.} We present experiments for both stationary and time-dependent problems, confirming the theoretical error bounds and demonstrating the effectiveness of our deep learning approach.
\end{itemize}

\section{Functional setting and mathematical framework} \label{sec:funcSet}
We provide the function spaces needed to obtain the required results and some basic inequalities in this section. 
\subsection{Functional setting}  
Let $\Omega$ be a bounded domain in $\mathbb{R}^d$. Let $C_c^{\infty}(\Omega)$ be the space of all $C^\infty$ functions with compact support contained in $\Omega$. For $1\le p<\infty,$ let  $L^p(\Omega)$ denote the space of equivalence classes of Lebesgue measurable functions $f : \Omega \to\mathbb{R}$ such that $\int_{\Omega}|f(x)|^pdx<\infty$, where two measurable functions are equivalent if they are equal a.e. For $f\in L^p(\Omega)$, we define the $L^p-$norm of $f$ as $\|f\|_{L^p(\Omega)}:=\left(\int_{\Omega}|f(x)|^pdx\right)^{1/p}$. We define $L^{\infty}(\Omega)$, the equivalence class of Lebesgue measurable functions $f:\Omega\to\mathbb{R}$ for which $\|f\|_{L^{\infty}(\Omega)}:=\mathrm{ess \ sup}_{x\in\Omega}|f(x)|<\infty$. For $p=2$, $L^2(\Omega)$ is a Hilbert space and the inner product in  $L^2(\Omega)$  is denoted by $(\cdot,\cdot)$ and is given by $(f,g)=\int_{\Omega}f(x)g(x)dx$, and the corresponding norm is denoted by $\|\cdot\|$. 	Moreover, let $H_0^1(\Omega)$ denote the Sobolev space (also denoted as $W^{1,2}_0(\Omega)$), which is defined as the space of equivalence classes of Lebesgue measurable functions $f \in L^2(\Omega)$ such that its weak derivative $\nabla f \in L^2(\Omega)$ and $f$ is of trace zero.  The $H_0^1(\Omega)-$norm is defined by $\|{f}\|_{H_0^1(\Omega)} := \left(\int_{\Omega}|\nabla f(x)|^2dx\right)^{1/2}$ (by using the Poincar\'e inequality).
Then, we define  $H^{-1}(\Omega) := (H_0^1(\Omega))'$, that is, dual of the Sobolev space $H_0^1(\Omega)$, with norm \[\|{g}\|_{H^{-1}(\Omega)} := \sup\left\{{\langle g, f\rangle}:  f \in H_0^1(\Omega),\ {\|{f}\|_{H_0^1(\Omega)}}\leq 1\right\},\] where $\langle\cdot,\cdot\rangle$ represent the duality pairing between $H_0^1(\Omega)$ and $H^{-1}(\Omega)$. 
We denote the second order Hilbertian Sobolev spaces by $H^2(\Omega)$. The general Sobolev spaces will be denoted by $W^{m,p}(\Omega)$, where $m\in\mathbb{N}$ and $p\in[1,\infty)$. It is known that there exists  a linear continuous operator $\gamma_0 \in\mathcal{L}(H^1(\Omega),L^2(\Gamma))$ (the trace operator), such that $\gamma_0 u$ equals the restriction of $u$ to $\Gamma,$ for every function $u \in H^1(\Omega)$  which is twice continuously differentiable in $\overline{\Omega}$, see \cite[p. 9]{TemamNSE01}. The kernel of $\gamma_0$ is equivalent to the space $H_0^1(\Omega)$. The image space $\gamma_0(H^1(\Omega))$ is a dense subspace of $L^2(\Gamma)$, which is represented by $H^{1/2}(\Gamma)$, that is, 
\begin{align*}
	H^{1/2}(\Gamma)=\gamma_0(H^1(\Omega))=\{v\in L^2(\Gamma): \text{ there exists a }  u\in H^1(\Omega) \mbox{ such that } \gamma_0(u)=v\}.
\end{align*}
The space $H^{1/2}(\Gamma)$ can be equipped with the norm carried from $H^1(\Omega)$ by $\gamma_0$.
The dual space of $H^{1/2}(\Gamma)$ is denoted by $H^{-1/2}(\Gamma)$.  Moreover, there exists  a linear continuous operator $\ell_{\Omega}\in \mathcal{L}(H^{1/2}(\Gamma),H^1(\Omega))$, which is called a \emph{lifting operator}, such that $\gamma_0\circ \ell_{\Omega}=$ the identity operator in $H^{1/2}(\Gamma)$. Similarly, one can define $H^{3/2}(\Gamma)=\gamma_0(H^2(\Omega))$. For further information, see Lions $\& $ Magenes \cite{LioMag-V2} and McLean \cite{McLean-2000}. 

The norm in any space $X$ is denoted by $\|\cdot\|_X.$  The constant $C>0$ is generic and may depend on the given parameters and domain.

\subsection{Some useful inequalities}
\noindent Let us
recall some useful inequalities that are frequently used in the paper.

\noindent \textbf{Young's inequality.} Let $a,b$ be any non-negative real numbers. Then for any $\varepsilon>0,$ the following inequalities hold:
\begin{align} \label{eqPR-YoungIneq}
	ab \le \frac{\varepsilon a^2}{2} + \frac{b^2}{2\varepsilon} \text{ and } ab \le \frac{a^p}{p}+\frac{b^q}{q},
\end{align}
for any $p,q>1$ such that $\frac{1}{p}+\frac{1}{q}=1.$
\begin{Proposition}[Generalized H\"older's inequality] \label{ppsPR-GenHoldIneq}
Let $\Omega$ be a bounded domain in $\mathbb{R}^d$.	Let $f\in L^p(\Omega),$ $g\in L^q(\Omega),$ and $h\in L^r(\Omega),$ where $1\le p, q ,r\leq\infty$ are such that $\frac{1}{p}+\frac{1}{q}+\frac{1}{r}=1.$ Then $fgh\in L^1(\Omega)$ and 
	\begin{align*}
		\|fgh\|_{L^1(\Omega)} \le \|f\|_{L^p(\Omega)} \|g\|_{L^q(\Omega)} \|h\|_{L^r(\Omega)}.
	\end{align*} 
\end{Proposition}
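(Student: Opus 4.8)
The plan is to reduce the three-function estimate to two successive applications of the classical two-function H\"older inequality, which in turn rests on Young's inequality \eqref{eqPR-YoungIneq}. First I would treat the generic case in which $p,q,r$ are all finite. Let $p'$ denote the conjugate exponent of $p$, so that $\frac{1}{p'}=1-\frac1p=\frac1q+\frac1r$. Applying H\"older's inequality with the conjugate pair $(p,p')$ to the functions $f$ and $gh$ gives $\|fgh\|_{L^1(\Omega)}\le\|f\|_{L^p(\Omega)}\,\|gh\|_{L^{p'}(\Omega)}$, provided $gh\in L^{p'}(\Omega)$, which is verified in the next step.

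Next I would estimate $\|gh\|_{L^{p'}(\Omega)}$. By the choice of $p'$ we have $\frac{p'}{q}+\frac{p'}{r}=1$, so $\frac{q}{p'}$ and $\frac{r}{p'}$ form a conjugate pair; applying H\"older's inequality to $|g|^{p'}$ and $|h|^{p'}$ with these exponents yields $\int_\Omega|g|^{p'}|h|^{p'}\,dx\le\left(\int_\Omega|g|^q\,dx\right)^{p'/q}\left(\int_\Omega|h|^r\,dx\right)^{p'/r}$, that is, $\|gh\|_{L^{p'}(\Omega)}\le\|g\|_{L^q(\Omega)}\|h\|_{L^r(\Omega)}$. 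This simultaneously shows that $gh\in L^{p'}(\Omega)$, justifying the previous step, and combining the two bounds gives both $fgh\in L^1(\Omega)$ and the asserted inequality.

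Finally I would dispose of the endpoint cases where one or more of $p,q,r$ equals $\infty$. If, say, $p=\infty$, then $\frac1q+\frac1r=1$, and using the pointwise bound $|f|\le\|f\|_{L^\infty(\Omega)}$ a.e.\ together with the two-function H\"older inequality for the conjugate pair $(q,r)$ gives $\|fgh\|_{L^1(\Omega)}\le\|f\|_{L^\infty(\Omega)}\|gh\|_{L^1(\Omega)}\le\|f\|_{L^\infty(\Omega)}\|g\|_{L^q(\Omega)}\|h\|_{L^r(\Omega)}$; the symmetric roles of the three factors cover the cases $q=\infty$ or $r=\infty$, and the case in which two exponents are infinite (forcing the third to equal $1$) follows at once from the pointwise bound alone. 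I do not expect a genuine obstacle: the only points needing a word of care are checking that the auxiliary exponents are admissible conjugate pairs and noting that boundedness of $\Omega$ is not actually used, so the inequality is valid on an arbitrary measure space.
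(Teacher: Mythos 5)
Your proof is correct. Note that the paper does not actually prove this proposition: it is merely recalled as a standard preliminary inequality (the section is titled ``Some useful inequalities'' and opens with ``Let us recall\dots''), so there is no argument in the paper to compare against. Your reduction --- first applying the two-function H\"older inequality with the conjugate pair $(p,p')$ where $\frac{1}{p'}=\frac1q+\frac1r$, then estimating $\|gh\|_{L^{p'}(\Omega)}$ via a second application with the conjugate exponents $q/p'$ and $r/p'$ --- is the standard textbook argument, the exponent bookkeeping checks out (in the all-finite case each of $p,q,r$ is automatically in $(1,\infty)$, so $p'$ is finite and the auxiliary exponents are admissible), and your handling of the endpoint cases with one or two exponents equal to $\infty$ is complete. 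Your closing observation that boundedness of $\Omega$ plays no role and the inequality holds on any measure space is also accurate.
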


\begin{Lemma}[Agmons' inequality {\n\cite{Agmon10}}] \label{lemVB:AgmonIE}
	Let $\Omega$ be a bounded domain in $\mathbb{R}^d,\, d\in \{1,2,3\},$  and let $f \in H^{s_2}(\Omega).$ Let $s_1,s_2$ be such that $s_1<\frac{d}{2}<s_2.$ If for $0<\theta<1,$ $\frac{d}{2}=\theta s_1+(1-\theta)s_2,$ then there exists a positive constant $C_a=C_a(\Omega)$ such that
	\begin{equation} \label{eqVB-AgmonIE}
		\|f\|_{L^\infty(\Omega)} \le C_a \|f\|_{H^{s_1}(\Omega)}^{\theta} \|f\|_{H^{s_2}(\Omega)}^{1-\theta}.
	\end{equation}
\end{Lemma}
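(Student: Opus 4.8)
\smallskip
\noindent\textbf{Proof strategy.} The idea is to transfer the inequality from the bounded domain $\Omega$ to $\mathbb{R}^d$, and then to prove the whole-space version by inserting a single frequency cutoff into the Fourier-inversion estimate and optimizing over its location. Since $\Omega$ is a bounded domain whose boundary is regular enough to admit a universal extension operator (a Stein-type extension $E$, bounded on $H^s(\Omega)$ for every $s\ge 0$, hence simultaneously bounded from $H^{s_i}(\Omega)$ into $H^{s_i}(\mathbb{R}^d)$ for $i=1,2$), put $g:=Ef$, so that $g|_\Omega=f$ and $\|g\|_{H^{s_i}(\mathbb{R}^d)}\le \|E\|\,\|f\|_{H^{s_i}(\Omega)}$. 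As $\|f\|_{L^\infty(\Omega)}\le\|g\|_{L^\infty(\mathbb{R}^d)}$, it suffices to establish the bound for $g$ on $\mathbb{R}^d$.

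On $\mathbb{R}^d$ one starts from $\|g\|_{L^\infty(\mathbb{R}^d)}\le C_d\|\widehat g\|_{L^1(\mathbb{R}^d)}$. Fix $R>0$, split the frequency space as $\{|\xi|\le R\}\cup\{|\xi|>R\}$, and apply the Cauchy--Schwarz inequality on each piece after inserting the weights $(1+|\xi|^2)^{\mp s_1}$ and $(1+|\xi|^2)^{\mp s_2}$, respectively:
\begin{align*}
\int_{|\xi|\le R}|\widehat g(\xi)|\,d\xi&\le\Big(\int_{|\xi|\le R}(1+|\xi|^2)^{-s_1}\,d\xi\Big)^{1/2}\|g\|_{H^{s_1}(\mathbb{R}^d)},\\
\int_{|\xi|>R}|\widehat g(\xi)|\,d\xi&\le\Big(\int_{|\xi|>R}(1+|\xi|^2)^{-s_2}\,d\xi\Big)^{1/2}\|g\|_{H^{s_2}(\mathbb{R}^d)}.
\end{align*}
In polar coordinates, the hypothesis $s_1<d/2$ makes the first weight integrable near the origin and yields $\int_{|\xi|\le R}(1+|\xi|^2)^{-s_1}d\xi\le C R^{d-2s_1}$, while $s_2>d/2$ makes the tail convergent and yields $\int_{|\xi|>R}(1+|\xi|^2)^{-s_2}d\xi\le C R^{d-2s_2}$, so that
\[
\|g\|_{L^\infty(\mathbb{R}^d)}\le C\Big(R^{\frac d2-s_1}\|g\|_{H^{s_1}(\mathbb{R}^d)}+R^{\frac d2-s_2}\|g\|_{H^{s_2}(\mathbb{R}^d)}\Big).
\]
If $\|g\|_{H^{s_1}(\mathbb{R}^d)}=0$ or $\|g\|_{H^{s_2}(\mathbb{R}^d)}=0$ then $f\equiv 0$ and there is nothing to prove; otherwise take $R=\big(\|g\|_{H^{s_2}(\mathbb{R}^d)}/\|g\|_{H^{s_1}(\mathbb{R}^d)}\big)^{1/(s_2-s_1)}$, which balances the two terms. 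Since $\tfrac d2=\theta s_1+(1-\theta)s_2$ gives $\tfrac d2-s_1=(1-\theta)(s_2-s_1)$ and $s_2-\tfrac d2=\theta(s_2-s_1)$, this choice of $R$ produces exactly $\|g\|_{L^\infty(\mathbb{R}^d)}\le C\|g\|_{H^{s_1}(\mathbb{R}^d)}^{\theta}\|g\|_{H^{s_2}(\mathbb{R}^d)}^{1-\theta}$, and combining with the extension bounds gives the assertion with $C_a=C\|E\|$.

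The analytic part of the argument --- one Cauchy--Schwarz and one scalar minimization --- is routine; the genuinely delicate point is the very first step, namely having a \emph{single} extension operator bounded for both Sobolev exponents $s_1<s_2$ at once, which forces the boundary of $\Omega$ to be sufficiently regular (Lipschitz suffices, by Stein's extension theorem, and the smooth domains used in the Burgers applications certainly qualify). A cosmetic caveat occurs when $s_1<0$: then $\int_{|\xi|\le R}(1+|\xi|^2)^{-s_1}d\xi$ must be bounded by $C(1+R)^{d-2s_1}$ rather than $CR^{d-2s_1}$, but the optimization proceeds with only trivial changes; note also that the restriction $d\in\{1,2,3\}$ plays no role beyond the applications.
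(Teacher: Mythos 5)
The paper does not prove this lemma at all: it is quoted verbatim as a classical result with a citation to Agmon's lectures, so there is no in-paper argument to compare against. Your proof is the standard Fourier-analytic derivation of Agmon's inequality and it is correct: extend $f$ to $g=Ef$ on $\mathbb{R}^d$, bound $\|g\|_{L^\infty}$ by $\|\widehat g\|_{L^1}$, split the frequency integral at $|\xi|=R$, apply Cauchy--Schwarz against the weights $(1+|\xi|^2)^{\pm s_i}$, and choose $R$ to balance the two terms; the arithmetic $\tfrac d2-s_1=(1-\theta)(s_2-s_1)$ and $s_2-\tfrac d2=\theta(s_2-s_1)$ indeed converts the balanced bound into $\|g\|_{H^{s_1}}^{\theta}\|g\|_{H^{s_2}}^{1-\theta}$, and your handling of the degenerate case and of the $s_1<0$ modification is fine.

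One point you raise deserves emphasis rather than a parenthesis: as stated in the paper, the lemma assumes only that $\Omega$ is a \emph{bounded domain}, which is not enough for your first step (or for any proof of the inequality with a domain-dependent constant), since a simultaneous extension operator bounded on both $H^{s_1}(\Omega)$ and $H^{s_2}(\Omega)$ requires some boundary regularity --- Lipschitz suffices by Stein's theorem, and for fractional $s_i$ one can interpolate between integer orders. In the body of the paper the lemma is only ever applied on convex or $C^2$ domains, so this is a gap in the statement's hypotheses rather than in your argument, but your proof should state explicitly that it uses this extra regularity. A second, purely cosmetic remark: for $0\le s_1<d/2$ the bound $\int_{|\xi|\le R}(1+|\xi|^2)^{-s_1}\,d\xi\le CR^{d-2s_1}$ is valid for all $R>0$ (for $R\le 1$ the integrand is at most $1$ and $R^{d}\le R^{d-2s_1}$), so the caveat you flag really only matters when $s_1<0$, exactly as you say.
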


In the following lemma, we recall Sobolev embedding \cite[Theorem 2.4.4]{Kes} in our context.
\begin{Lemma}[Sobolev embedding] \label{lemPR:SobEmb}
	Let $\Omega$ be a bounded domain in $\mathbb{R}^d$ of class $C^1$  with $d\in \mathbb{N}.$ Then, we have the following continuous embedding with constant $C_s$:
	\begin{itemize}
		\item[$(a)$] $H^1(\Omega)\hookrightarrow L^p(\Omega)$ for $d>2,$ where $p=\frac{2d}{d-2},$
		\item[$(b)$] $H^1(\Omega) \hookrightarrow L^q(\Omega)$ for all $d=2\le q<\infty.$ 
	\end{itemize}
\end{Lemma}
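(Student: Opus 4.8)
The statement is the classical Sobolev embedding theorem, so the plan is to follow the Gagliardo--Nirenberg--Sobolev route, first on the whole space and then transported to the bounded $C^1$ domain $\Omega$. Since $\partial\Omega$ is of class $C^1$, there is a bounded linear extension operator $E\colon H^1(\Omega)\to H^1(\mathbb{R}^d)$ with $(Eu)|_\Omega=u$ and $\|Eu\|_{H^1(\mathbb{R}^d)}\le C(\Omega)\|u\|_{H^1(\Omega)}$, and, because $\Omega$ is bounded, $E$ may be taken to have range in functions supported in a fixed ball. As $C_c^\infty(\mathbb{R}^d)$ is dense in $H^1(\mathbb{R}^d)$, it then suffices to establish, for every $\varphi\in C_c^\infty(\mathbb{R}^d)$, the bounds $\|\varphi\|_{L^{p}(\mathbb{R}^d)}\le C\|\varphi\|_{H^1(\mathbb{R}^d)}$ with $p=\frac{2d}{d-2}$ in case $(a)$, and $\|\varphi\|_{L^{2k}(\mathbb{R}^d)}\le C_k\|\varphi\|_{H^1(\mathbb{R}^d)}$ for every $k\in\mathbb{N}$ in case $(b)$; applying these to mollifications of $Eu$, passing to the limit in $H^1(\mathbb{R}^d)$ and a.e.\ along a subsequence, and invoking Fatou's lemma gives the estimate for $Eu$, and restriction to $\Omega$ yields the claim. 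In case $(b)$, once $H^1(\Omega)\hookrightarrow L^{2k}(\Omega)$ is known for all $k$, every finite $q\ge 2$ is covered because $L^{2k}(\Omega)\hookrightarrow L^q(\Omega)$ on the bounded domain $\Omega$ whenever $2k\ge q$, and $q<2$ is immediate from $L^2(\Omega)\hookrightarrow L^q(\Omega)$.

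The heart of the argument is the base inequality
\[
\|\varphi\|_{L^{d/(d-1)}(\mathbb{R}^d)}\le\prod_{i=1}^d\Big(\int_{\mathbb{R}^d}|\partial_i\varphi|\,dx\Big)^{1/d}\le\|\nabla\varphi\|_{L^1(\mathbb{R}^d)},\qquad \varphi\in C_c^\infty(\mathbb{R}^d).
\]
To prove it, write $|\varphi(x)|\le\int_{\mathbb{R}}|\partial_i\varphi|\,ds$ for each $i$ by the fundamental theorem of calculus, multiply the $d$ resulting inequalities to obtain $|\varphi(x)|^{d/(d-1)}\le\prod_{i=1}^d\big(\int_{\mathbb{R}}|\partial_i\varphi|\,ds\big)^{1/(d-1)}$, and integrate successively in $x_1,\dots,x_d$; at the $j$-th integration one factor does not depend on $x_j$ and the remaining $d-1$ factors are handled by the generalized H\"older inequality (Proposition~\ref{ppsPR-GenHoldIneq}) with $d-1$ exponents all equal to $d-1$, which peels off one factor $\big(\int|\partial_i\varphi|\,dx\big)^{1/(d-1)}$ per step. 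For $d>2$ one then applies this inequality to $|\varphi|^\gamma$ with $\gamma=\frac{2(d-1)}{d-2}$ (legitimate since $|\nabla|\varphi|^\gamma|=\gamma|\varphi|^{\gamma-1}|\nabla\varphi|$ a.e.) and bounds the right-hand side by H\"older's inequality with exponents $2$ and $2$; the choice of $\gamma$ makes the left-hand exponent equal to $\frac{2d}{d-2}$ and matches it on the right, so that term can be absorbed, leaving $\|\varphi\|_{L^{2d/(d-2)}(\mathbb{R}^d)}\le C\|\nabla\varphi\|_{L^2(\mathbb{R}^d)}\le C\|\varphi\|_{H^1(\mathbb{R}^d)}$.

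For $d=2$ the Sobolev exponent degenerates, so instead one iterates: the base inequality reads $\|\psi\|_{L^2(\mathbb{R}^2)}\le\|\nabla\psi\|_{L^1(\mathbb{R}^2)}$, and with $\psi=|\varphi|^\gamma$ together with H\"older's inequality this gives $\|\varphi\|_{L^{2\gamma}(\mathbb{R}^2)}^\gamma\le C\gamma\,\|\varphi\|_{L^{2(\gamma-1)}(\mathbb{R}^2)}^{\gamma-1}\|\nabla\varphi\|_{L^2(\mathbb{R}^2)}$. Starting from $\|\varphi\|_{L^2}\le\|\varphi\|_{H^1}$ and applying this with $\gamma=2,3,4,\dots$ in turn yields $\|\varphi\|_{L^{2k}(\mathbb{R}^2)}\le C_k\|\varphi\|_{H^1(\mathbb{R}^2)}$ for every $k\in\mathbb{N}$, which is what was needed.

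The step I expect to be the main obstacle is the proof of the base inequality $\|\varphi\|_{L^{d/(d-1)}}\le\prod_i\|\partial_i\varphi\|_{L^1}^{1/d}$, where the careful bookkeeping of which variable has already been integrated out and of the exponents at each stage of the iterated generalized H\"older inequality must be carried out in general dimension $d$. The other ingredients---the extension operator for $C^1$ domains, density of $C_c^\infty(\mathbb{R}^d)$ in $H^1(\mathbb{R}^d)$, the chain rule for $|\varphi|^\gamma$, and the limiting/restriction argument---are standard and may be quoted.
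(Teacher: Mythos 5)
Your proposal is correct. Note, however, that the paper does not prove this lemma at all: it is recalled as a classical result and cited to the literature (Kesavan, Theorem 2.4.4), so there is no in-paper argument to compare against. What you have written is the standard Gagliardo--Nirenberg--Sobolev proof: the base inequality $\|\varphi\|_{L^{d/(d-1)}(\mathbb{R}^d)}\le\prod_i\bigl(\int|\partial_i\varphi|\,dx\bigr)^{1/d}$ via the fundamental theorem of calculus and iterated generalized H\"older, the substitution $\psi=|\varphi|^{\gamma}$ with $\gamma=\tfrac{2(d-1)}{d-2}$ and absorption for $d>2$, the Moser-type iteration over $\gamma=2,3,\dots$ for $d=2$, and the transfer to $\Omega$ by a bounded extension operator (which exists precisely because $\partial\Omega$ is $C^1$ and $\Omega$ is bounded) together with density and a limiting argument. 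The exponent bookkeeping checks out: $\gamma d/(d-1)=2d/(d-2)$ and $2(\gamma-1)=2d/(d-2)$, so the absorption step is legitimate. One small point worth a sentence in a written-out version: the base inequality is stated for $\varphi\in C_c^\infty(\mathbb{R}^d)$ but is then applied to $|\varphi|^{\gamma}$, which is only $C_c^1$; since the fundamental-theorem-of-calculus derivation needs only one continuous derivative, the inequality does extend to $C_c^1$ functions, but this should be said explicitly. With that gloss the argument is complete and self-contained, which is more than the paper offers for this statement.
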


In the next theorem, we recall the important inequality, known as the Gagliardo-Nirenberg inequality in the case of bounded domain with smooth boundary.

\begin{Theorem}[Gagliardo-Nirenberg inequality {\normalfont\cite{Nirenberg1959}}] \label{thm:G-NinBdd}
	Let $\Omega \subset \mathbb{R}^d,$ $d\in\{2,3\},$ be a convex domain or domain with $C^2$-boundary $\Gamma$. For any \( u \in W^{m,q}(\Omega) \), and for any integers \( j \) and \( m \) satisfying \( 0 \leq j < m \), the following inequality holds:
	\[
	\| D^j u \|_{L^p(\Omega)} \leq C \|  u \|_{W^{m,q}(\Omega)}^\theta \| u \|_{L^r(\Omega)}^{1-\theta} ,
	\]
	where \( D^j u \) denotes the \( j \)-th order weak derivative of \( u \), and:
	\[
	\frac{1}{p} = \frac{j}{d}+\theta \left( \frac{1}{q} - \frac{m}{d} \right) +  \frac{(1-\theta)}{r},
	\]
	for some constant \( C \) that depends on the domain \( \Omega \) but not on \( u \), where \( \theta \in [0,1] \).
\end{Theorem}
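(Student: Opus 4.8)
The plan is to reduce the inequality on the bounded domain $\Omega$ to the corresponding inequality on the whole space $\mathbb{R}^d$, where it is classical, by means of a Sobolev extension operator. Since $\Omega$ is either convex (hence automatically has a uniformly Lipschitz, indeed cone-regular, boundary) or has a $C^2$-boundary, Stein's extension theorem provides a \emph{single} linear operator $E$ that is simultaneously bounded from $W^{k,s}(\Omega)$ into $W^{k,s}(\mathbb{R}^d)$ for every integer $0\le k\le m$ and every $s\in[1,\infty]$, with $(Eu)|_{\Omega}=u$. In particular $\|Eu\|_{W^{m,q}(\mathbb{R}^d)}\le C(\Omega)\|u\|_{W^{m,q}(\Omega)}$ and $\|Eu\|_{L^r(\mathbb{R}^d)}\le C(\Omega)\|u\|_{L^r(\Omega)}$. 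Hence it suffices to prove the stated estimate for $v:=Eu\in W^{m,q}(\mathbb{R}^d)$ and then restrict back to $\Omega$ using the trivial bound $\|D^j u\|_{L^p(\Omega)}\le\|D^j v\|_{L^p(\mathbb{R}^d)}$.

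For the whole-space estimate I would first record the Gagliardo--Nirenberg--Sobolev endpoint inequality: for $w\in C_c^\infty(\mathbb{R}^d)$ one has $\|w\|_{L^{d/(d-1)}(\mathbb{R}^d)}\le\prod_{i=1}^d\|\partial_i w\|_{L^1(\mathbb{R}^d)}^{1/d}$, obtained by writing $w$ as the integral of $\partial_i w$ along the $i$-th coordinate axis and applying the iterated generalized H\"older inequality of Proposition~\ref{ppsPR-GenHoldIneq}. Feeding $|w|^{\gamma}$ into this estimate for a suitable power $\gamma>1$, and using H\"older's inequality to split the resulting $L^1$-norm of $|w|^{\gamma-1}|\nabla w|$, yields the first-order interpolation inequality $\|w\|_{L^p(\mathbb{R}^d)}\le C\|\nabla w\|_{L^q(\mathbb{R}^d)}^{a}\|w\|_{L^r(\mathbb{R}^d)}^{1-a}$, i.e. the case $j=0$, $m=1$. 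The general case $0\le j<m$ then follows by iterating this bound in the order of differentiation, so as to control $\|D^j v\|_{L^p}$ by a product of a norm of a higher-order derivative and a lower-order norm (eventually reaching $\|v\|_{L^r}$), together with a standard interior interpolation of intermediate derivatives of the form $\|D^j v\|\le C\|D^m v\|^{j/m}\|v\|^{1-j/m}$. The precise fractional parameter $\theta\in[0,1]$ and the exponent relation for $1/p$ are forced by a dilation argument $w\mapsto w(\lambda\,\cdot)$, which fixes the powers and exhibits the dimensional consistency of the inequality; a density argument extends it from $C_c^\infty(\mathbb{R}^d)$ to all of $W^{m,q}(\mathbb{R}^d)$.

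Combining the two steps, for $v=Eu$ we obtain
\[
\|D^j u\|_{L^p(\Omega)}\le\|D^j v\|_{L^p(\mathbb{R}^d)}\le C\,\|v\|_{W^{m,q}(\mathbb{R}^d)}^{\theta}\,\|v\|_{L^r(\mathbb{R}^d)}^{1-\theta}\le C(\Omega)\,\|u\|_{W^{m,q}(\Omega)}^{\theta}\,\|u\|_{L^r(\Omega)}^{1-\theta},
\]
which is the assertion. I expect the main obstacle to be the first step, namely securing a single (``total'') extension operator that is bounded on all the Sobolev scales involved at once: for $C^2$-domains this is Stein's construction via reflection in boundary normal coordinates, and for bounded convex domains one must first verify that such domains satisfy a uniform Lipschitz condition so that Stein's theorem applies. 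A secondary technical nuisance is the exponent bookkeeping in the second step — isolating the admissible ranges of $p,q,r,j,m,\theta$ and treating the borderline situations (e.g. when $m-j-d/q$ is a nonnegative integer, or $p=\infty$, $r=\infty$) separately, where the pure power trick must be replaced by a direct or Trudinger-type argument.
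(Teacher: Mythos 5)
The paper offers no proof of this theorem: it is recalled verbatim as a classical result and attributed to Nirenberg's 1959 paper, so there is no in-paper argument to compare yours against. Your outline is the standard route and is essentially sound: bounded convex domains and $C^2$-domains are both Lipschitz, so Stein's total extension operator applies uniformly across the scales $W^{k,s}(\Omega)$, $0\le k\le m$, and the problem reduces to the whole-space inequality, which you correctly build from the endpoint estimate $\|w\|_{L^{d/(d-1)}}\le\prod_i\|\partial_i w\|_{L^1}^{1/d}$, the power substitution $w\mapsto|w|^{\gamma}$, the intermediate-derivative interpolation, and scaling to pin down the exponent relation. One concrete point you should make explicit rather than leave under ``exponent bookkeeping'': the theorem as printed asserts the inequality for all $\theta\in[0,1]$, but the argument you sketch (and the inequality as it is actually true) requires $\theta\in[j/m,1]$, with $\theta=1$ excluded in the exceptional case where $m-j-d/q$ is a nonnegative integer; for $\theta<j/m$ the dilation argument you invoke shows the whole-space estimate \emph{fails}, and on a bounded domain the seminorm version fails as well. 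This discrepancy is harmless for the way the paper uses the theorem --- e.g.\ in the proof of Theorem \ref{thm:GblSolHomBurg} with $d=3$, $j=1$, $m=2$, $q=2$, $r=p$ one gets $\theta=(12-p)/(p+6)\ge 1/2=j/m$ precisely for $p\le 6$, which is the range used --- but a careful write-up should either restrict $\theta$ accordingly or justify the extension of the range, since your proof as proposed only delivers the restricted version.
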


\begin{Lemma}[General Gronwall inequality \cite{Canon99}] \label{lem:Gronwall} 
	Let $f,g, h$ and $y$ be four locally integrable positive functions on $[t_0,\infty)$ such that
	\begin{align*}
		 y(t)+\int_{t_0}^t f(s)ds \le C+ \int_{t_0}^t h(s)ds + \int_{t_0}^t g(s)y(s)ds\  \text{ for all }t\ge t_0,
	\end{align*}
	where $C\ge 0$ is any constant. Then 
	\begin{align*}
		 y(t)+\int_{t_0}^t f(s)ds \le \left(C+\int_{t_0}^t h(s)ds\right) \exp\left( \int_{t_0}^t g(s)ds\right) \text{ for all }t\ge t_0.
	\end{align*}
\end{Lemma}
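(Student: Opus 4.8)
The plan is to reduce the integral inequality to a linear first-order differential inequality and then solve it with an integrating factor. Here \emph{positive} is understood as nonnegative, and all the integrals that appear are finite on compact subintervals of $[t_0,\infty)$. Set
\[
R(t) := C + \int_{t_0}^t h(s)\,ds + \int_{t_0}^t g(s)y(s)\,ds ,
\]
which is well defined and locally absolutely continuous on $[t_0,\infty)$, since $h$ and $gy$ are locally integrable (local integrability of $gy$ is implicit in the hypothesis, the integral $\int_{t_0}^t g(s)y(s)\,ds$ appearing there). By assumption $y(t)+\int_{t_0}^t f(s)\,ds \le R(t)$ for all $t\ge t_0$, and since $f\ge 0$ this gives in particular $y(t)\le R(t)$ for all $t\ge t_0$.

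Next I would differentiate. For a.e.\ $t\ge t_0$, using $g\ge 0$ together with $y\le R$,
\[
R'(t) = h(t)+g(t)y(t) \le h(t)+g(t)R(t),
\]
i.e.\ $R'(t)-g(t)R(t)\le h(t)$. Writing $G(t):=\int_{t_0}^t g(s)\,ds$ (locally absolutely continuous, nonnegative, and Lipschitz on each compact subinterval), I multiply by the integrating factor $e^{-G(t)}>0$; the product rule in the absolutely continuous sense (valid since $e^{-G}$ is Lipschitz and $R$ is absolutely continuous on compact subintervals) yields
\[
\frac{d}{dt}\!\left(e^{-G(t)}R(t)\right) = e^{-G(t)}\bigl(R'(t)-g(t)R(t)\bigr) \le e^{-G(t)}h(t) \le h(t)\quad\text{a.e.,}
\]
where the last inequality uses $G(t)\ge 0$, hence $e^{-G(t)}\le 1$, and $h\ge 0$.

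Then I would integrate this over $[t_0,t]$. Since $e^{-G(\cdot)}R(\cdot)$ is absolutely continuous with $R(t_0)=C$ and $G(t_0)=0$, the fundamental theorem of calculus gives $e^{-G(t)}R(t)-C \le \int_{t_0}^t h(s)\,ds$, that is,
\[
R(t) \le \left(C+\int_{t_0}^t h(s)\,ds\right)e^{G(t)} ,
\]
and combining with $y(t)+\int_{t_0}^t f(s)\,ds \le R(t)$ proves the claim. The only genuine technical point is the justification that $R$ is absolutely continuous (so that $R'=h+gy$ a.e.\ and the fundamental theorem applies) and that the product $e^{-G}R$ may be differentiated by the product rule; both are standard once one notes $G$, hence $e^{-G}$, is Lipschitz on compact subintervals. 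If one prefers to avoid differentiation, an alternative is a Picard-type iteration: substitute $y(s)\le R(s)$ back into $\int_{t_0}^t g(s)y(s)\,ds$ repeatedly, using that $C+\int_{t_0}^{\cdot} h$ is nondecreasing and $\int_{t_0\le s_1\le\cdots\le s_k\le t} g(s_1)\cdots g(s_k)\,ds = \tfrac{1}{k!}\bigl(\int_{t_0}^t g\bigr)^k$, so the partial sums reproduce $\bigl(C+\int_{t_0}^t h\bigr)\sum_{k\ge 0}\tfrac{1}{k!}\bigl(\int_{t_0}^t g\bigr)^k$ with a vanishing remainder; but the integrating-factor argument is shorter.
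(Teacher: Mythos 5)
Your argument is correct and complete. Note, however, that the paper does not prove this lemma at all: it is recalled verbatim from the cited reference \cite{Canon99} and used as a black box, so there is no in-paper proof to compare against. Your integrating-factor derivation is the standard one and establishes the stated conclusion, including the slightly-lossy but sufficient step $e^{-G(s)}h(s)\le h(s)$, which is exactly what is needed to reproduce the factor $\bigl(C+\int_{t_0}^t h\bigr)\exp\bigl(\int_{t_0}^t g\bigr)$ rather than the sharper convolution form. One small inaccuracy: $G(t)=\int_{t_0}^t g(s)\,ds$ with $g$ merely locally integrable is locally absolutely continuous but not necessarily Lipschitz on compact subintervals (that would require $g$ locally bounded); the product rule for $e^{-G}R$ is still justified, since the product of two functions absolutely continuous on a compact interval is absolutely continuous there and Leibniz's rule holds a.e., so this does not affect the validity of the proof.
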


Now, we state a version of nonlinear generalization of Gronwalls' inequality that will be useful in our later analysis.
\begin{Theorem}{(A nonlinear generalization of Gronwall’s inequality \cite[Theorem 21]{DragSS}).}\label{lem:NonlinGronwall}
Let $\zeta(t)$ be a non-negative locally integrable function that satisfies the integral inequality
\begin{align*}
	\zeta(t)\le c+ \int_{t_0}^t \left(  a(s)\zeta(s) + b(s)\zeta^{\varepsilon}(s) \right) ds, c\ge 0, \ 0\le \varepsilon<1,
\end{align*}
where $a(t)$ and $b(t)$ are continuous non-negative functions for $t\ge t_0.$ Then, the following inequality holds:
\begin{align*}
	\zeta(t) \le \left\lbrace c^{1-\varepsilon} \exp{\left[(1-\varepsilon)\int_{t_0}^ta(s)ds\right]} + (1-\varepsilon)\int_{t_0}^t b(s) \exp{\left[(1-\varepsilon)\int_{s}^t a(r)dr \right]} ds\right\rbrace^{\frac{1}{1-\varepsilon}}.
\end{align*}
\end{Theorem}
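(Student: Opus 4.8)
The plan is to run the classical Bihari--LaSalle reduction: replace $\zeta$ by its integral majorant, turn the integral inequality into a Bernoulli-type differential inequality, linearize it by the power substitution, and finish with an integrating-factor (variation-of-parameters) estimate.

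\emph{Step 1 (majorant and differential inequality).} Assume first $c>0$. Set $u(t):=c+\int_{t_0}^t\bigl(a(s)\zeta(s)+b(s)\zeta^{\varepsilon}(s)\bigr)\,ds$. Since $a,b$ are continuous hence locally bounded, $\zeta\in L^1_{\mathrm{loc}}$, and $\zeta^{\varepsilon}\in L^1_{\mathrm{loc}}$ (on any $[t_0,T]$, H\"older with exponents $1/\varepsilon$ and $1/(1-\varepsilon)$ gives $\int_{t_0}^T\zeta^{\varepsilon}\le(\int_{t_0}^T\zeta)^{\varepsilon}(T-t_0)^{1-\varepsilon}$), the integrand is locally integrable; thus $u$ is absolutely continuous and nondecreasing, with $u(t_0)=c$, $u(t)\ge c>0$, and $u'(t)=a(t)\zeta(t)+b(t)\zeta^{\varepsilon}(t)$ for a.e.\ $t$. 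By hypothesis $\zeta\le u$, and since $a,b\ge 0$ and $s\mapsto s^{\varepsilon}$ is nondecreasing on $[0,\infty)$,
\[
u'(t)\le a(t)u(t)+b(t)u(t)^{\varepsilon}\qquad\text{for a.e.\ }t\ge t_0.
\]

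\emph{Step 2 (linearization and integrating factor).} Put $v(t):=u(t)^{1-\varepsilon}$. On $[c,\infty)$ the map $s\mapsto s^{1-\varepsilon}$ is Lipschitz (its derivative $(1-\varepsilon)s^{-\varepsilon}$ is bounded by $(1-\varepsilon)c^{-\varepsilon}$), so $v$ is absolutely continuous with $v(t_0)=c^{1-\varepsilon}$ and, a.e.,
\[
v'(t)=(1-\varepsilon)u(t)^{-\varepsilon}u'(t)\le(1-\varepsilon)u(t)^{-\varepsilon}\bigl(a(t)u(t)+b(t)u(t)^{\varepsilon}\bigr)=(1-\varepsilon)\bigl(a(t)v(t)+b(t)\bigr).
\]
Let $\mu(t):=\exp\bigl[-(1-\varepsilon)\int_{t_0}^t a(s)\,ds\bigr]$; then $(\mu v)'(t)=\mu(t)\bigl(v'(t)-(1-\varepsilon)a(t)v(t)\bigr)\le(1-\varepsilon)\mu(t)b(t)$ a.e. Integrating from $t_0$ to $t$ (legitimate, as $\mu v$ is absolutely continuous) and dividing by $\mu(t)>0$, together with $\mu(t)^{-1}=\exp[(1-\varepsilon)\int_{t_0}^t a]$ and $\mu(s)/\mu(t)=\exp[(1-\varepsilon)\int_s^t a(r)\,dr]$, yields
\[
v(t)\le c^{1-\varepsilon}\exp\Bigl[(1-\varepsilon)\!\int_{t_0}^t\! a(s)\,ds\Bigr]+(1-\varepsilon)\!\int_{t_0}^t\! b(s)\exp\Bigl[(1-\varepsilon)\!\int_s^t\! a(r)\,dr\Bigr]ds.
\]
Since $\zeta(t)\le u(t)=v(t)^{1/(1-\varepsilon)}$ and $s\mapsto s^{1/(1-\varepsilon)}$ is increasing, raising both sides to the power $1/(1-\varepsilon)$ gives the asserted bound when $c>0$. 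One could instead invoke the linear Gronwall inequality (Lemma~\ref{lem:Gronwall}) in this step, but it produces a coarser constant, so the direct integrating-factor computation is preferable to recover the sharp form.

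\emph{Step 3 (case $c=0$ and the main obstacle).} For $c=0$, apply Steps~1--2 with $c$ replaced by an arbitrary $\delta>0$ (the hypothesis is still satisfied), obtaining the bound with $\delta^{1-\varepsilon}$ in place of $c^{1-\varepsilon}$, and let $\delta\downarrow 0$; the right-hand side is continuous in $\delta$, so the limit gives the claim with $c^{1-\varepsilon}=0$. The only genuinely delicate point in the whole argument is the regularity bookkeeping of Steps~1--2 under the mere local integrability of $\zeta$: verifying $\zeta^{\varepsilon}\in L^1_{\mathrm{loc}}$, that $u$ is absolutely continuous with $u'=a\zeta+b\zeta^{\varepsilon}$ a.e., and that the chain rule applies to $v=u^{1-\varepsilon}$ (which rests on $u$ being absolutely continuous, nondecreasing, and bounded away from $0$, and on the Lipschitz property of $s\mapsto s^{1-\varepsilon}$ on $[c,\infty)$). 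Once this is settled, the remainder is a routine linear Gronwall estimate.
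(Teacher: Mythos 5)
The paper does not prove this statement at all: it is recalled verbatim from the cited reference (\cite[Theorem 21]{DragSS}) and used as a black box. Your argument is correct and is essentially the standard Bihari-type proof of that quoted result --- majorize $\zeta$ by the absolutely continuous, nondecreasing integral $u$, pass to the Bernoulli substitution $v=u^{1-\varepsilon}$, and close with an integrating factor; the regularity bookkeeping (local integrability of $\zeta^{\varepsilon}$ via H\"older, the chain rule for $u^{1-\varepsilon}$ justified by $u\ge c>0$, and the limiting argument $\delta\downarrow 0$ for $c=0$) is handled properly, so there is nothing to correct.
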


\section{Error estimates for stationary viscous Burgers equation} \label{sec:StVB}
The main objective of this section is to establish error estimates for stationary  viscous Burgers' equation using PINNs. 
Let $\Omega \subset \mathbb{R}^d,$ $d\in\{2,3\},$ be a convex domain or domain with $C^2$-boundary $\Gamma$. We first consider the following stationary viscous Burgers equation: 
\begin{equation} \label{eqn:St-Burg}
	\left\{
	\begin{aligned}
		& -\nu \Delta u_{\infty} (x)+\sum_{i=1}^d u_{\infty} (x)\frac{\partial u_{\infty}}{\partial x_i} (x) =f_{\infty}(x)\  \text{ in }\  \Omega, \\
		& u_{\infty}(x)  = 0 \ \text{ on }\ \Gamma,
	\end{aligned} \right.
\end{equation}
where $\nu>0$ is a given constant and $f_{\infty}$ is the forcing term. The explicit function space for $f_{\infty}$  is stated in the following theorem. Now, we state a result on the existence and uniqueness  of weak as well as strong solutions of \eqref{eqn:St-Burg}.

\begin{Theorem}[{\n\cite{MTMAK21}}]\label{thm:Stat-Burg-WP}
    For given $f\in H^{-1}(\Omega),$ there exists a weak solution (of \eqref{eqn:St-Burg}) $u_{\infty}\in H^1_0(\Omega)$ such that 
	\begin{align*}
		\|u_{\infty}\|_{H^1(\Omega)} \le \frac{1}{\nu} \|f_{\infty}\|_{H^{-1}(\Omega)}.
	\end{align*}
		If $f_{\infty}\in L^2(\Omega),$ then $u_{\infty}\in H^2(\Omega) \cap H^1_0(\Omega)$ and satisfies
	\begin{align*}
		\|u_{\infty}\|_{H^2(\Omega)} \le C(\nu) \|f_{\infty}\|.
	\end{align*} 
	Moreover, if 
	\begin{align}\label{5p2}
		\nu>\left\{\begin{array}{cc}\left(\frac{2}{\lambda_1}\right)^{1/4}\|f_{\infty}\|_{H^{-1}(\Omega)}&\ \text{ if } d=2,\\
			\left(\frac{\sqrt{2}}{\lambda_1^{1/8}}\right)\|f_{\infty}\|_{H^{-1}(\Omega)}&\ \text{ if } d=3,
		\end{array}\right.
	\end{align}
	where $\lambda_1$ is the first eigenvalue of Dirichlet's Laplacian, then the weak solution is unique.

\end{Theorem}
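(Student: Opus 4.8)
The plan is to prove this theorem in three parts, corresponding to the three assertions: existence of a weak solution with the $H^1$ bound, $H^2$-regularity when the data is in $L^2$, and uniqueness under the smallness condition \eqref{5p2}.

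For \textbf{existence and the $H^1$ bound}, I would use a Galerkin approximation: take the eigenfunctions $\{w_k\}$ of the Dirichlet Laplacian as a basis, seek $u_m = \sum_{k=1}^m c_k^m w_k$ solving the finite-dimensional projected problem, and establish solvability of the nonlinear algebraic system via Brouwer's fixed point theorem (using the fact that the convective term $\sum_i u \partial_{x_i} u$ has the antisymmetry property $\left(\sum_i v\,\partial_{x_i}v, v\right)=0$ in $H^1_0$, since $\sum_i v\partial_{x_i}v = \frac12 \sum_i \partial_{x_i}(v^2)$ which integrates against $v$... actually more carefully one uses $(\,(v\cdot\nabla)v, v)$-type cancellation after noting the structure; for the scalar Burgers form one writes $\sum_i u\partial_{x_i}u \cdot u$ and integrates by parts). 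Testing the Galerkin equations with $u_m$ itself, the nonlinear term's contribution vanishes (or is controlled), leaving $\nu\|u_m\|_{H^1_0}^2 \le \|f_\infty\|_{H^{-1}}\|u_m\|_{H^1_0}$, hence $\|u_m\|_{H^1_0}\le \frac{1}{\nu}\|f_\infty\|_{H^{-1}}$. This uniform bound gives a weakly convergent subsequence in $H^1_0(\Omega)$; since $d\le 3$, $H^1_0\hookrightarrow L^4$ compactly, which lets me pass to the limit in the nonlinear term (Rellich–Kondrachov), and the limit $u_\infty$ is a weak solution satisfying the stated bound by weak lower semicontinuity of the norm.

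For the \textbf{$H^2$-regularity}, assuming $f_\infty\in L^2(\Omega)$, I would rewrite the equation as $-\nu\Delta u_\infty = f_\infty - \sum_i u_\infty\partial_{x_i}u_\infty =: g$ and invoke elliptic regularity for the Dirichlet Laplacian on a convex domain or $C^2$-domain (this is exactly why those hypotheses on $\Omega$ are imposed): $\|u_\infty\|_{H^2}\le C\|g\|$. It remains to bound $\|\sum_i u_\infty\partial_{x_i}u_\infty\|_{L^2}$. Using the Gagliardo–Nirenberg / Agmon inequalities (Lemma~\ref{lemVB:AgmonIE}, Theorem~\ref{thm:G-NinBdd}) to interpolate $\|u_\infty\|_{L^\infty}$ and $\|\nabla u_\infty\|_{L^4}$ or similar between $\|u_\infty\|_{H^1}$ and $\|u_\infty\|_{H^2}$, together with the already-established $H^1$ bound and Young's inequality to absorb the resulting $\|u_\infty\|_{H^2}$ factor on the left, yields $\|u_\infty\|_{H^2}\le C(\nu)\|f_\infty\|$. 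The main technical care here is tracking the powers in the interpolation so that the $H^2$-norm appears with exponent $<1$ and can be absorbed; in $d=3$ this is tighter than in $d=2$, which is why the constant depends on $\nu$.

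For \textbf{uniqueness}, suppose $u_1, u_2$ are two weak solutions and set $w = u_1 - u_2 \in H^1_0(\Omega)$. Subtracting the weak formulations and testing with $w$, the viscous term gives $\nu\|w\|_{H^1_0}^2$, and the difference of the nonlinear terms, after using the cancellation structure, reduces to a term of the form $\left|\sum_i(w\,\partial_{x_i}u_2, w)\right|$, which I would estimate by $\|w\|_{L^4}^2\|\nabla u_2\|$ via Hölder, then by Sobolev/Ladyzhenskaya-type embeddings ($\|w\|_{L^4}^2 \le C\|w\|_{H^1_0}^2$, or with a better interpolation exponent in $d=2$) and the bound $\|u_2\|_{H^1}\le \frac1\nu\|f_\infty\|_{H^{-1}}$, yielding $\nu\|w\|_{H^1_0}^2 \le C\|f_\infty\|_{H^{-1}}\|w\|_{H^1_0}^2$ (with the constant $C$ involving $\lambda_1$ through the Poincaré/Sobolev constants). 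Under the smallness condition \eqref{5p2} this forces $w=0$. \textbf{The main obstacle} I anticipate is getting the dimension-dependent constants in \eqref{5p2} exactly right — in particular producing the explicit powers $\left(\frac{2}{\lambda_1}\right)^{1/4}$ in $d=2$ and $\frac{\sqrt2}{\lambda_1^{1/8}}$ in $d=3$ requires using the sharp form of the relevant Gagliardo–Nirenberg/Ladyzhenskaya inequalities with the Poincaré constant $\lambda_1^{-1/2}$ inserted carefully, rather than just a generic Sobolev embedding constant; since this is quoted from \cite{MTMAK21}, I would reproduce that computation and cite it. Everything else is standard energy-method bookkeeping.
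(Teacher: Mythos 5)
Your outline follows exactly the route the paper indicates: the paper does not prove Theorem \ref{thm:Stat-Burg-WP} but states that it follows from Brouwer's fixed-point theorem combined with a Galerkin approximation (citing \cite{MTMAK21} and \cite{AKMTMRRB}), which is precisely your existence argument, and your elliptic-regularity and energy-method steps for the $H^2$ bound and uniqueness are the standard completions of that scheme. You also correctly identify that the explicit constants in \eqref{5p2} must be reproduced from the sharp Ladyzhenskaya/Gagliardo--Nirenberg and Poincar\'e constants as in the cited reference, so nothing essential is missing.
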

By using Brouwer's fixed-point theorem (\cite[Chapter 2, Section 1]{TemamNSE01}) and a Galerkin approximation method, one  can establish the proof of above theorem and is standard (for example, see \cite[Theorems 2.1 - 2.3]{AKMTMRRB}), hence we omit here. 

\begin{Remark}
	The condition \eqref{5p2} means that  one has to take $\nu$ sufficiently large for arbitrary external forcing $f_{\infty}\in H^{-1}(\Omega)$ and small external forcing for arbitrary $\nu>0$. 
\end{Remark}

Now, we briefly discuss the existence of a strong solution for the following  stationary viscous Burgers equation with non-homogeneous boundary data, which is helpful in the sequel: 

\begin{equation} \label{eqn:St-BurgNH}
	\left\{
	\begin{aligned}
	&	-\nu \Delta u_{\infty} (x)+\sum_{i=1}^d u_{\infty} (x)\frac{\partial u_{\infty}}{\partial x_i}  (x)=f_{\infty} (x)\ \text{ in }\  \Omega, \\
	&	u_{\infty} (x) = g_\infty(x)\  \text{ on }\ \Gamma.
	\end{aligned} \right.
\end{equation}

\begin{Theorem}\label{thm-non-hom}
		For a given $f_\infty \in H^{-1}(\Omega)$ and $g_\infty \in H^{\frac{1}{2}}(\Gamma),$ there exists a weak solution $u_\infty \in H^1(\Omega)$ of \eqref{eqn:St-BurgNH} satisfying 
		$$\|u_\infty\|_{H^1(\Omega)} \le C \left(\|f_\infty\|_{H^{-1}(\Omega)} + \|g_\infty\|_{H^{\frac{1}{2}}(\Gamma)}\right),$$
		for some $C>0.$ 
		
	For $f_\infty \in L^2(\Omega)$ and $g_\infty \in H^{\frac{3}{2}}(\Gamma),$  every weak solution of \eqref{eqn:St-BurgNH} is  a  strong solution $u_\infty \in H^2(\Omega)$  satisfying 
	$$\|u_\infty\|_{H^2(\Omega)} \le C \left(\|f_\infty\| + \|g_\infty\|_{H^{\frac{3}{2}}(\Gamma)}\right),$$
	for some $C>0.$ Moreover, if $\nu>0$ is sufficiently large, then the solution is unique. 
\end{Theorem}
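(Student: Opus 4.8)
The plan is to reduce the non-homogeneous problem \eqref{eqn:St-BurgNH} to a homogeneous one via the lifting operator, and then reproduce the Galerkin/Brouwer argument behind Theorem~\ref{thm:Stat-Burg-WP} for the perturbed equation. First I would choose a lift: for $g_\infty\in H^{1/2}(\Gamma)$ set $G:=\ell_\Omega g_\infty\in H^1(\Omega)$ with $\|G\|_{H^1(\Omega)}\le C\|g_\infty\|_{H^{1/2}(\Gamma)}$, and look for $u_\infty=v+G$ with $v\in H^1_0(\Omega)$. Substituting, $v$ solves $-\nu\Delta v+\sum_i(v+G)\partial_{x_i}(v+G)=f_\infty+\nu\Delta G$ in $\Omega$, $v=0$ on $\Gamma$. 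Expanding the nonlinearity, the new right-hand side lives in $H^{-1}(\Omega)$ (the terms $G\partial_{x_i}G$ and $v\partial_{x_i}G$ are controlled by Sobolev embedding $H^1\hookrightarrow L^4$ in $d\le 3$, Lemma~\ref{lemPR:SobEmb}, together with $\nu\Delta G\in H^{-1}$), so the problem is an $H^1_0$ weak formulation with a nonlinearity that is no longer purely the Burgers bilinear term but has extra lower-order pieces depending linearly on $v$ and on the fixed data $G$.

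Next I would run the standard existence argument: take a Galerkin basis (eigenfunctions of the Dirichlet Laplacian), and on the finite-dimensional space apply Brouwer's fixed-point theorem, \cite[Chapter 2, Section 1]{TemamNSE01}, to the map whose zeros are Galerkin solutions. The key coercivity estimate comes from testing with $v$ itself: the cubic term $\sum_i(v\partial_{x_i}v,v)$ vanishes by integration by parts as in the homogeneous case, the term $\nu(\nabla v,\nabla v)=\nu\|v\|_{H^1_0}^2$ gives the leading coercive contribution, and the remaining terms — $(G\partial_{x_i}v,v)$, $(v\partial_{x_i}G,v)$, $(G\partial_{x_i}G,v)$, $(\nu\Delta G,v)$, $(f_\infty,v)$ — are absorbed by Young's inequality \eqref{eqPR-YoungIneq}, Hölder (Proposition~\ref{ppsPR-GenHoldIneq}), and the Sobolev/Agmon-type bounds, at the cost of requiring the a~priori bound to close, which it does in the form $\|v\|_{H^1_0}\le C(\|f_\infty\|_{H^{-1}}+\|g_\infty\|_{H^{1/2}})$ after passing to the limit via Rellich compactness. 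Returning to $u_\infty=v+G$ yields the claimed $H^1$ bound. For the regularity step, when $f_\infty\in L^2(\Omega)$ and $g_\infty\in H^{3/2}(\Gamma)$ one instead lifts by $G\in H^2(\Omega)$ with $\|G\|_{H^2}\le C\|g_\infty\|_{H^{3/2}}$, writes the equation for $v$ as $-\nu\Delta v=F$ with $F:=f_\infty+\nu\Delta G-\sum_i(v+G)\partial_{x_i}(v+G)\in L^2(\Omega)$ (using $H^1\hookrightarrow L^6$, $d\le 3$, so products of $H^1$ functions with $H^1$ gradients lie in $L^2$ once one already knows $v\in H^1$; a bootstrap $H^1\to H^2$ may be needed, done by elliptic regularity on the convex/$C^2$ domain), and applies $H^2$ elliptic regularity, Theorem~\ref{thm:Stat-Burg-WP}-type estimates, to get $\|v\|_{H^2}\le C\|F\|\le C(\|f_\infty\|+\|g_\infty\|_{H^{3/2}})$ after absorbing the quadratic terms with the Gagliardo–Nirenberg inequality (Theorem~\ref{thm:G-NinBdd}) and Young's inequality; then $u_\infty=v+G\in H^2(\Omega)$.

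For uniqueness, suppose $u_\infty^{(1)},u_\infty^{(2)}$ are two solutions and set $w:=u_\infty^{(1)}-u_\infty^{(2)}\in H^1_0(\Omega)$ (the boundary data cancel). Subtracting the two equations and testing with $w$, the viscous term gives $\nu\|w\|_{H^1_0}^2$, while the difference of nonlinearities, after integration by parts, is bounded by $C\big(\|u_\infty^{(1)}\|_{H^1}+\|u_\infty^{(2)}\|_{H^1}\big)\|w\|_{L^4}\|\nabla w\|\le C\big(\|f_\infty\|_{H^{-1}}+\|g_\infty\|_{H^{1/2}}\big)\|w\|_{H^1_0}^2$ using the a~priori bounds and $H^1\hookrightarrow L^4$; hence if $\nu$ exceeds this constant times $\big(\|f_\infty\|_{H^{-1}}+\|g_\infty\|_{H^{1/2}}\big)$ we get $\|w\|_{H^1_0}=0$. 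I expect the main obstacle to be bookkeeping the extra lower-order terms generated by the lift so that (i) the right-hand side genuinely sits in $H^{-1}$ (resp.\ $L^2$) with the stated dependence on the data, and (ii) the coercivity/a~priori estimate still closes — in particular tracking how the smallness-of-data or largeness-of-$\nu$ threshold now depends on $\|g_\infty\|_{H^{1/2}(\Gamma)}$ as well; the rest is a routine adaptation of the homogeneous theory already cited in Theorem~\ref{thm:Stat-Burg-WP}.
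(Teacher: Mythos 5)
Your proposal follows essentially the same route as the paper: lift the boundary data via $\ell_\Omega$, reduce to a homogeneous Dirichlet problem for $v=u_\infty-z$ with a modified right-hand side, verify that this right-hand side lies in $H^{-1}(\Omega)$ (resp.\ $L^2(\Omega)$) with the stated dependence on the data, and then invoke the Galerkin/Brouwer machinery and elliptic regularity behind Theorem~\ref{thm:Stat-Burg-WP}, with uniqueness for large $\nu$ by the standard energy argument on the difference. The paper is terser (it defers the closing of the coercivity estimate and the $H^1\to H^2$ bootstrap to the cited arguments of the homogeneous case), but the decomposition, the key bounds on $f_\infty+\nu\Delta z-\sum_i z\,\partial_{x_i}z$, and the conclusion are the same as yours.
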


\begin{proof}
	For a given $g_\infty\in H^{s+\frac{1}{2}}(\Gamma),$ $s\in\{0,1\}$, one can define a lifting operator $\ell: H^{s+\frac{1}{2}}(\Gamma) \rightarrow H^{1+s}(\Omega)$ and let $z:=\ell(g_\infty).$ We also have 
	\begin{align}\label{eqn-lift}
		\|z\|_{H^{1+s}(\Omega)}\leq C_{\ell}\|g\|_{H^{s+\frac{1}{2}}(\Gamma)}, \ s\in\{0,1\}. 
	\end{align}
	Then $v:=u_\infty-z$ satisfies 
	\begin{equation}  \label{eq:St-BrgNH-22}
		\left\{
		\begin{aligned}
			& -\nu \Delta v + \sum_{i=1}^d v \frac{\partial v}{\partial x_i} + \sum_{i=1}^d v \frac{\partial z}{\partial x_i} + \sum_{i=1}^d \frac{\partial v}{\partial x_i} z  =f_{\infty} +\nu\Delta z-\sum_{i=1}^d z \frac{\partial z}{\partial x_i} \text{ in } \Omega, \\
			& v  = 0 \text{ on }\Gamma.
		\end{aligned} \right.
	\end{equation}
For $g_\infty\in H^{\frac{1}{2}}(\Gamma)$ and $f_\infty \in H^{-1}(\Omega)$,	one can show that 
\begin{align*}
	\bigg\|f_{\infty} +\nu\Delta z-\frac{1}{2}\sum_{i=1}^d  \frac{\partial z^2}{\partial x_i}\bigg\|_{H^{-1}(\Omega)}&\leq \|f_{\infty} \|_{H^{-1}(\Omega)}+\nu\|z\|_{H^1(\Omega)}+C\|z\|_{L^4(\Omega)}^2\\&\leq  \|f_{\infty} \|_{H^{-1}(\Omega)}+C_{\ell}\|g\|_{H^{\frac{1}{2}}(\Gamma)}+C_s\|z\|_{H^1(\Omega)}^2\\&\leq   \|f_{\infty} \|_{H^{-1}(\Omega)}+C\|g\|_{H^{\frac{1}{2}}(\Gamma)}+C\|g\|_{H^{\frac{1}{2}}(\Gamma)}^2,
\end{align*}
where we have used Sobolev's inequality and \eqref{eqn-lift}. Then by using similar arguments as in the proof of \cite[Theorems 2.22 - 2.2]{AKMTMRRB} (see Theorem \ref{thm:Stat-Burg-WP}), one can obtain the existence of a weak solution to the problem \eqref{eq:St-BrgNH-22}. 

For $g_\infty\in H^{\frac{3}{2}}(\Gamma)$ and $f_\infty \in L^2(\Omega)$,	it is immediate that  
	\begin{align*}
		\bigg\|f_{\infty} +\nu\Delta z-\sum_{i=1}^d z \frac{\partial z}{\partial x_i}\bigg\| & \le \|f_{\infty}\| + \nu \|z\|_{H^2(\Omega)}+ \|z\|_{L^4(\Omega)} \|\nabla z\|_{L^4(\Omega)}\\
		& \le \|f_{\infty}\| + \nu C_\ell \|g_\infty\|_{H^{\frac{3}{2}}(\Gamma)} + C_sC_\ell \|g_\infty\|_{H^{\frac{3}{2}}(\Gamma)}^2,
	\end{align*}
where once again we have used Sobolev's inequality and \eqref{eqn-lift}. Now, following a similar argument as in the proof of \cite[Theorem 2.3]{AKMTMRRB}   (Theorem \ref{eqn:St-Burg}), one can show that the weak solution is strong and $v\in H^2(\Omega)\cap H^1_0(\Omega)$ of \eqref{eq:St-BrgNH-22}. Therefore, since $u_{\infty}=v+z$, the problem \eqref{eqn:St-BurgNH} admits unique weak solution for sufficiently large $\nu$,  $f_\infty \in H^{-1}(\Omega)$ and $g_\infty \in H^{\frac{1}{2}}(\Gamma),$ and a strong solution for $f_\infty \in L^2(\Omega), g_\infty \in H^{\frac{3}{2}}(\Gamma)$.
\end{proof}

\subsection{Minimization problem}

To discuss the error estimates for stationary viscous Burgers equation using PINNs, it is convenient to consider the following minimization problem (\cite{BiswasDNN-NSE}):
\begin{align*}
	\inf_{u_\infty \in \text{appropriate space}} \bigg\{\Big\|- \nu \Delta u_\infty  + \sum_{i=1}^d u_\infty \frac{\partial u_\infty}{\partial x_i} - f_\infty \Big\|^2 + \| u_\infty|_\Gamma\|^2_{L^2(\Gamma)}\bigg\},
\end{align*}
and it has a unique solution, with the value of the infimum being zero and the infimum is attained at the solution $u_\infty$ of \eqref{eqn:St-Burg}. Thus, in order to approximate $u_\infty$ using a DNN, one considers the following  loss function:
\begin{align*}
	\Big\|- \nu \Delta u_\infty^N  + \sum_{i=1}^d u_\infty^N \frac{\partial u_\infty^N}{\partial x_i} - f_\infty \Big\|^2 + & \| u_\infty^N|_\Gamma\|^2_{L^{2}(\Gamma)} ,\ u_\infty^N\in\mathcal{F}_N,
\end{align*}
under the restriction that $u_\infty^N$ is bounded in $ H^2(\Omega)$. More precisely, let $p_k\in \Omega$ for $k=1,2,\ldots,m$ and $q_j \in \Gamma$ for $j=1,2,\ldots,n$ be some chosen points. Then on these points, we write the cost functional as
\begin{align*}
	\inf_{\{\tilde{u}^N\in \mathcal{F}_N, \ \|\tilde{u}^N\|_{H^2(\Omega)}\le M\}} \sum_{k=1}^m \bigg| - \nu \Delta \tilde{u}^N(p_k)  + \sum_{i=1}^d \tilde{u}^N(p_k) \frac{\partial \tilde{u}^N}{\partial x_i}(p_k) - f_\infty(p_k)\bigg|^2 + \sum_{j=1}^n |\tilde{u}^N(q_j)|^2.
\end{align*}

\subsection{Error estimates}
In the next theorem, we show that if there is a DNN $u_\infty^N$ that approximates \eqref{eqn:St-Burg}, then it is ``close enough'' to $u_\infty,$ the exact solution of \eqref{eqn:St-Burg}.

\begin{Theorem} \label{th;error-statVB}
	Let $f_{\infty}\in L^2(\Omega)$ and $u_\infty$ be the solution of stationary Burgers equation \eqref{eqn:St-Burg}, and for a given $\varepsilon>0$, there exists $u_\infty^N\in \mathcal{F}_N$ with $\|u_{\infty}^N\|_{H^2(\Omega)}\leq M$ such that 
	\begin{align} \label{eqn:stBurg_givenHypo}
		\Big\|-\nu \Delta u_\infty^N + \sum_{i=1}^d u_\infty^N \frac{\partial u_\infty^N}{\partial x_i}-f_\infty\Big\|^2 + \|\left.u_\infty^N\right|_\Gamma \|_{L^2(\Gamma)}^2 \le \varepsilon^2.
	\end{align}
	Then for 
	\begin{align}
		\nu> \max\left\{\sqrt{3}\left(\frac{1}{2\lambda_1}\right)^{1/4}\|f_{\infty}\|_{H^{-1}(\Omega)}^{1/2},\frac{3}{2}C_{\ell}M\varepsilon^{2/3}\right\}
	\end{align}
	 and for some $C>0,$ the following estimate holds:
	\begin{align*}
		\|u_\infty- u_\infty^N\|_{H^1(\Omega)} \le C \varepsilon^{\frac{2}{3}}.
	\end{align*}
	
	Furthermore, under the assumption 
	\begin{align} \label{eqn:stBurg_givenImpHypo}
		\Big\|-\nu \Delta u_\infty^N + \sum_{i=1}^d u_\infty^N \frac{\partial u_\infty^N}{\partial x_i}-f_\infty\Big\|^2 + \|\left.u_\infty^N\right|_\Gamma \|_{H^{1/2}(\Gamma)}^2 \le \varepsilon^2,
	\end{align}
	we obtain the improved estimate as 
	\begin{align*}
		\|u_\infty- u_\infty^N\|_{H^1(\Omega)}\le \varepsilon.
	\end{align*}
\end{Theorem}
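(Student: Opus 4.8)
The plan is to set $e:=u_\infty-u_\infty^N$ and derive an equation for $e$ by subtracting the PDE \eqref{eqn:St-Burg} satisfied exactly by $u_\infty$ from the (approximate) PDE satisfied by $u_\infty^N$ up to a residual. Write the residual as $R_\infty^N := -\nu\Delta u_\infty^N+\sum_{i=1}^d u_\infty^N\frac{\partial u_\infty^N}{\partial x_i}-f_\infty$, so that $\|R_\infty^N\|\le\varepsilon$ by \eqref{eqn:stBurg_givenHypo}, and note that $e$ need not vanish on $\Gamma$ since $u_\infty^N|_\Gamma$ is only small, not zero; we have $\|e|_\Gamma\|_{L^2(\Gamma)}\le\varepsilon$. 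The natural first move is to lift the boundary data: using the lifting operator $\ell_\Omega$ from Section~\ref{sec:funcSet}, set $w:=\ell_\Omega(e|_\Gamma)=-\ell_\Omega(u_\infty^N|_\Gamma)$ and $\eta:=e-w\in H^1_0(\Omega)$, with $\|w\|_{H^1(\Omega)}\le C_\ell\|e|_\Gamma\|_{H^{1/2}(\Gamma)}$; but since we only control the $L^2(\Gamma)$-norm of the boundary trace in the first part, one has to be careful and either use the weaker lifting estimate or (as the statement's exponent $\varepsilon^{2/3}$ suggests) absorb the loss of regularity through an interpolation/Agmon-type estimate — this is the source of the $2/3$ power.

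Next I would test the equation for $\eta$ against $\eta$ itself in $L^2(\Omega)$. The diffusion term gives $\nu\|\nabla\eta\|^2$. The nonlinear terms split into a part bilinear in $u_\infty,u_\infty^N$ and lower-order terms; the key structural fact, as in the standard Burgers energy estimates, is that the symmetric transport term $\sum_i(u\,\partial_i\eta,\eta)$ vanishes for $u$ divergence-free-like / via integration by parts on $H^1_0$, leaving the ``bad'' term of the form $\sum_i(\eta\,\partial_i u_\infty,\eta)$, which is controlled by $\|\nabla u_\infty\|\,\|\eta\|_{L^4(\Omega)}^2$ and hence, via the Gagliardo–Nirenberg/Ladyzhenskaya inequality (Theorem~\ref{thm:G-NinBdd}) and the a priori bound $\|u_\infty\|_{H^1(\Omega)}\le\frac1\nu\|f_\infty\|_{H^{-1}(\Omega)}$ from Theorem~\ref{thm:Stat-Burg-WP}, by $C\nu^{-1}\|f_\infty\|_{H^{-1}(\Omega)}\|\nabla\eta\|^2$ (in $d=2$; with an extra factor in $d=3$ requiring the stated cube-root-type smallness). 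This term is absorbed into $\nu\|\nabla\eta\|^2$ precisely under the first displayed lower bound on $\nu$, i.e. $\nu>\sqrt3(2\lambda_1)^{-1/4}\|f_\infty\|_{H^{-1}(\Omega)}^{1/2}$ (the $\sqrt3$ and the square root appear because three bad contributions are each bounded with a Young's-inequality factor). The residual pairing $(R_\infty^N,\eta)$ is bounded by $\varepsilon\|\eta\|\le C\varepsilon\|\nabla\eta\|$ via Poincaré, and all the terms involving $w$ (its own diffusion $\nu(\nabla w,\nabla\eta)$, and the mixed nonlinear terms $(\eta\,\partial_i w,\eta)$, $(w\,\partial_i u_\infty^N,\eta)$, etc.) are bounded using $\|w\|_{H^1}\le C_\ell\|e|_\Gamma\|\le C_\ell\varepsilon^{?}$, the $H^2$-bound $\|u_\infty^N\|_{H^2(\Omega)}\le M$, and Sobolev/Agmon embeddings; this is where the factor $C_\ell M$ enters and forces the second lower bound $\nu>\tfrac32 C_\ell M\varepsilon^{2/3}$, so that these terms are also absorbed into the left side.

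After absorption we obtain $\nu\|\nabla\eta\|^2\le C(\varepsilon^2+(\text{boundary-lift contributions}))$, which combined with $\|e\|_{H^1}\le\|\eta\|_{H^1}+\|w\|_{H^1}$ and the estimate for $\|w\|_{H^1}$ yields $\|e\|_{H^1(\Omega)}\le C\varepsilon^{2/3}$; tracking the exponents, the bottleneck is that with only an $L^2(\Gamma)$ bound on the trace one must pay with a fractional power, producing $\varepsilon^{2/3}$ rather than $\varepsilon$. For the improved estimate under \eqref{eqn:stBurg_givenImpHypo}, the trace of $u_\infty^N$ is now controlled in $H^{1/2}(\Gamma)$, so the lifting gives $\|w\|_{H^1(\Omega)}\le C_\ell\varepsilon$ directly with no loss, every term on the right is genuinely $O(\varepsilon^2)$ (after the same absorptions, which still hold under the same $\nu$-condition — indeed more easily), and one concludes $\|e\|_{H^1(\Omega)}\le\varepsilon$. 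The main obstacle I anticipate is the careful bookkeeping of the nonlinear cross-terms involving both the lift $w$ and the high-regularity DNN iterate $u_\infty^N$: one must use the $H^2$-bound $M$ together with Agmon's inequality (Lemma~\ref{lemVB:AgmonIE}) to get an $L^\infty$-control of $\nabla u_\infty^N$ or $u_\infty^N$ in $d=2,3$, and then balance all Young's-inequality constants so that exactly the stated thresholds on $\nu$ suffice; getting the constants $\sqrt3$ and $\tfrac32$ (and the precise exponents of $\lambda_1$) to come out requires choosing the splitting of absorptions deliberately rather than crudely.
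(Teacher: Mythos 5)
Your proposal follows essentially the same route as the paper's proof: decompose the error into a lifting of the (small) boundary trace plus an $H^1_0$ remainder, test the remainder's equation against itself, absorb the bad transport terms under the stated lower bounds on $\nu$ using the a priori bound $\|u_\infty\|_{H^1(\Omega)}\le \nu^{-1}\|f_\infty\|_{H^{-1}(\Omega)}$, and recover the exponent $2/3$ exactly as the paper does, via the interpolation $\|\wt{g}_\infty\|_{H^{1/2}(\Gamma)}\le\|\wt{g}_\infty\|_{L^2(\Gamma)}^{2/3}\|\wt{g}_\infty\|_{H^{3/2}(\Gamma)}^{1/3}\le M\varepsilon^{2/3}$ combined with the trace theorem and the $H^2$-bound $M$. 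The treatment of the improved estimate (direct $H^{1/2}(\Gamma)$ control of the lift, giving $O(\varepsilon)$ with no loss) also coincides with the paper's.
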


\begin{proof}
	Let $\displaystyle -\wt{f}_\infty:=-\nu \Delta u_\infty^N + \sum_{i=1}^d u_\infty^N \frac{\partial u_\infty^N}{\partial x_i}-f_\infty$ and $-\wt{g}_\infty:= \left.u_\infty^N\right|_\Gamma .$ Then $u_\infty^N$ satisfies
	\begin{equation*}
		\left\{
		\begin{aligned} 
		&	-\nu \Delta u_\infty^N + \sum_{i=1}^d u_\infty^N \frac{\partial u_\infty^N}{\partial x_i}  = f_\infty-\wt{f}_\infty \text{ in }\Omega, \\
		&	u_\infty^N  = - \wt{g}_\infty \text{ on }\Gamma. 
		\end{aligned} \right.
	\end{equation*}
	Set $w=u_\infty-u_\infty^N$ to obtain
	\begin{equation*}
		\left\{
		\begin{aligned}
		&	-\nu \Delta w + \sum_{i=1}^d (u_\infty-u_\infty^N) \frac{\partial u_\infty}{\partial x_i} + \sum_{i=1}^d u_\infty^N \frac{\partial }{\partial x_i}(u_\infty - u_\infty^N)  = \wt{f}_\infty \text{ in }\Omega, \\
		&	w  = \wt{g}_\infty \text{ on }\Gamma,
		\end{aligned} \right.
	\end{equation*}
	which further can be re-written as
	\begin{equation} \label{eq:StBg w=u-uN}
		\left\{
		\begin{aligned}
		&	-\nu \Delta w + \sum_{i=1}^d w \frac{\partial u_\infty}{\partial x_i} + \sum_{i=1}^d u_\infty \frac{\partial w}{\partial x_i} -  \sum_{i=1}^d w \frac{\partial w}{\partial x_i}  = \wt{f}_\infty \text{ in }\Omega, \\
		&	w  = \wt{g}_\infty \text{ on }\Gamma  .
		\end{aligned} \right.
	\end{equation}
The solvability of the above problem is discussed in Theorem \ref{thm-non-hom}. 	Consider the lifting operator $\ell: H^{1/2}(\Gamma)\rightarrow H^1(\Omega)$ and denote $z:=\ell(\wt{g}_\infty).$ Then $ z$ and $\wt{g}_\infty$ satisfy (\cite{LioMag-V2,McLean-2000})
	\begin{align} \label{eqn:z_H1 le g_H1/2}
		\|z\|_{H^1(\Omega)} \le C_\ell \|\wt{g}_\infty\|_{H^{1/2}(\Gamma)}.
	\end{align}
	Now, set $v=w-z,$ so that we have
	\begin{equation*}
		\left\{
		\begin{aligned}
		&	-\nu \Delta (v+z) + \sum_{i=1}^d (v+z) \frac{\partial u_\infty}{\partial x_i} + \sum_{i=1}^d u_\infty \frac{\partial }{\partial x_i}(v+z) -  \sum_{i=1}^d (v+z) \frac{\partial }{\partial x_i}(v+z)   = \wt{f}_\infty\text{ in }\Omega, \\
			& v  = 0 \text{ on }\Gamma  .
		\end{aligned} \right.
	\end{equation*}
	On simplifying this, we obtain
	\begin{equation*}
		\left\{
		\begin{aligned}
		&	-\nu \Delta v + \sum_{i=1}^d v \frac{\partial u_\infty}{\partial x_i}  +\sum_{i=1}^d z \frac{\partial u_\infty}{\partial x_i} + \sum_{i=1}^d u_\infty \frac{\partial v}{\partial x_i}  + \sum_{i=1}^d u_\infty \frac{\partial z}{\partial x_i}   \\
		& \qquad \qquad 	-  \sum_{i=1}^d v \frac{\partial v}{\partial x_i}- \sum_{i=1}^d v \frac{\partial z}{\partial x_i} - \sum_{i=1}^d z \frac{\partial v}{\partial x_i}  = \wt{f}_\infty + \nu \Delta z +  \sum_{i=1}^d z \frac{\partial z}{\partial x_i} \text{ in }\Omega, \\
		&	v  = 0 \text{ on }\Gamma  .
		\end{aligned} \right.
	\end{equation*}
	Taking the  inner product with $v$ leads to
	\begin{equation}  \label{eqn:Stat-Burg-Smpl-WF} 
	\begin{aligned}
		\nu \|\nabla v\|^2 & = -\Big(\sum_{i=1}^d v \frac{\partial u_\infty}{\partial x_i}, v\Big) - \Big( \sum_{i=1}^d u_\infty \frac{\partial v}{\partial x_i}, v\Big) - \Big(\sum_{i=1}^d z \frac{\partial u_\infty}{\partial x_i},v\Big) - \Big(\sum_{i=1}^d u_\infty \frac{\partial z}{\partial x_i}, v\Big) \\
		& \quad + \Big(\sum_{i=1}^d v \frac{\partial z}{\partial x_i},v\Big) + \Big( \sum_{i=1}^d z \frac{\partial v}{\partial x_i}, v\Big) +(\wt{f}_\infty,v) - \nu (\nabla z, \nabla v) + \Big(\sum_{i=1}^d z \frac{\partial z}{\partial x_i},v\Big)\\
		& = \Big(\sum_{i=1}^d u_\infty \frac{\partial v}{\partial x_i}, v\Big)+ \Big(\sum_{i=1}^d \frac{\partial v}{\partial x_i}, u_\infty z\Big) + \frac{1}{2}\Big(\sum_{i=1}^d v \frac{\partial z}{\partial x_i},v\Big) - \frac{1}{2}\Big(\sum_{i=1}^d z \frac{\partial v}{\partial x_i},z\Big) \\
		& \qquad  +(\wt{f}_\infty,v) - \nu (\nabla z,\nabla v)
		\\&=\sum_{k=1}^6I_k,
	\end{aligned}
	\end{equation}
where we have performed integration by parts. 	
	Using H\"{o}lder's, Ladyzhenskaya's and Poincar\'e's inequalities, and Theorem \ref{thm:Stat-Burg-WP}, we estimate $I_1$ as
	\begin{align*}
	|I_1|&=	\Big\vert \Big(\frac{1}{2}\sum_{i=1}^d v \frac{\partial u_\infty}{\partial x_i} , v\Big)  \Big\vert   \le \frac{1}{2}\|\nabla u_\infty\| \|v\|_{L^4(\Omega)}^2\leq \frac{1}{\sqrt{2}} \|\nabla u_\infty\| \|\nabla v\|\|v\| \leq\sqrt{\frac{1}{2\lambda_1}} \|\nabla u_\infty\| \|\nabla v\|^2\\&\le\frac{1}{\nu} \sqrt{\frac{1}{2\lambda_1}} \|f_\infty\|_{H^{-1}(\Omega)}\|\nabla v\|^2.
	\end{align*}
	 A generalized H\"{o}lder's inequality,  \eqref{eqn:z_H1 le g_H1/2}, and Theorem \ref{thm:Stat-Burg-WP} lead to
	\begin{align*}
	|I_2|&=	\Big\vert \Big(\sum_{i=1}^d  \frac{\partial v}{\partial x_i}, u_\infty z\Big)  \Big\vert \le \|\nabla v\| \|u_\infty\|_{L^4(\Omega)} \|z\|_{L^4(\Omega)}  \le \frac{\eta\nu}{4}\|\nabla v\|^2 + C \|z\|_{H^1(\Omega)}^2\|u_\infty\|_{H^1(\Omega)}^2 \\
		& \le \frac{\eta\nu}{4}\|\nabla v\|^2 + C \|f_\infty\|_{H^{-1}(\Omega)}^2 \|\wt{g}_\infty\|_{H^{1/2}(\Gamma)}^2,
	\end{align*}
	where the constant $C$ depends on $C_s, \nu,$ and $C_\ell,$ and the constant $\eta>0$ which will be specified later. 
	Using generalized H\"{o}lder's inequality, Sobolev's embedding and \eqref{eqn:z_H1 le g_H1/2}, we obtain
	\begin{align*}
	|I_3|=	\Big\vert \frac{1}{2} \Big(\sum_{i=1}^d v \frac{\partial z}{\partial x_i},v\Big) \Big\vert \le  \frac{1}{2}\|\nabla z\| \|v\|_{L^4(\Omega)}^2 \le  \frac{1}{2}\|z\|_{H^1(\Omega)} \|\nabla v\|^2 \le \frac{C_\ell}{2} \|\wt{g}_\infty\|_{H^{1/2}(\Gamma)} \|\nabla v\|^2.
	\end{align*}
	Once again applying generalized H\"{o}lder's and Cauchy Schwarz inequalities, and \eqref{eqn:z_H1 le g_H1/2}, we deduce
	\begin{align*}
	|I_4|=	\Big\vert   \frac{1}{2}\Big(\sum_{i=1}^d z \frac{\partial v}{\partial x_i},z\Big) \Big\vert \le \frac{1}{2} \|\nabla v\| \|z\|_{L^4(\Omega)}^2 \le \frac{\eta\nu}{4}\|\nabla v\|^2 + C \|\wt{g}_\infty\|_{H^{1/2}(\Gamma)}^4,
	\end{align*}
	where the constant $C$ depends on $C_s, \nu,$ and $C_\ell.$
	H\"older's and Young's inequalities yield
	\begin{align*}
|I_5|&	= |(\wt{f}_\infty,v)| \le \frac{\eta\nu}{4}\|\nabla v\|^2 + C_\nu\|\wt{f}_\infty\|^2_{H^{-1}(\Omega)},\\
|I_6|&=|\nu (\nabla z, \nabla v)| \le \frac{\eta\nu}{4} \|\nabla v\|^2 +C_{\nu} \|\nabla z\|^2 \le \frac{\eta\nu}{4} \|\nabla v\|^2 +C \|\wt{g}_\infty\|_{H^{1/2}(\Gamma)}^2.
\end{align*}

	For the given hypothesis \eqref{eqn:stBurg_givenHypo}, we use interpolation and Trace theories to find
	\begin{align*}
		\|\wt{g}_\infty\|_{H^{\frac{1}{2}}(\Gamma)} \le \|\wt{g}_\infty\|_{L^2(\Gamma)}^{\frac{2}{3}} \|\wt{g}_\infty\|_{H^{\frac{3}{2}}(\Gamma)}^{\frac{1}{3}} \le \varepsilon^{\frac{2}{3}}\|u_\infty^N\|_{H^2(\Omega)}^{\frac{1}{3}} \le M \varepsilon^{\frac{2}{3}}.
	\end{align*}
	Moreover, using \eqref{eqn:stBurg_givenHypo}, we also have 
	\begin{align*}
		\|\wt{f}_\infty\|_{H^{-1}(\Omega)}\leq \varepsilon. 
	\end{align*}
	Let $\eta=\frac{1}{3}.$ Now, one can choose $\nu$ sufficiently large or $\|f_\infty\|_{H^{-1}(\Omega)}$ sufficiently small so that 
	\begin{align*}
		\frac{\nu}{3} > \frac{1}{\nu} \sqrt{\frac{1}{2\lambda_1}}\|f_\infty\|_{H^{-1}(\Omega)}, \text{ that is, } \nu^2 \ge 3 \sqrt{\frac{1}{2\lambda_1}}\|f_\infty\|_{H^{-1}(\Omega)}.
	\end{align*}
	Since, $\|\wt{g}_\infty\|_{H^{\frac{1}{2}}(\Gamma)} \le M \varepsilon,$ one can choose $\varepsilon>0$ sufficiently small so that 
	\begin{align*}
		\frac{C_\ell C_2}{2}\varepsilon^{\frac{2}{3}}< \frac{\nu}{3}.
	\end{align*}
	Combining the above estimates in \eqref{eqn:Stat-Burg-Smpl-WF}, we obtain 
	\begin{align*}
		\frac{\nu}{3}\|\nabla v\|^2 \le C_\nu (\|f_\infty\|_{H^{-1}(\Omega)}^2 + \varepsilon^{\frac{4}{3}}) \varepsilon^{\frac{4}{3}},  
	\end{align*}
	and hence using Poincar\'e's inequality, we deduce
	\begin{align*}
		\| v\|_{H^1(\Omega)}^2 \le C\varepsilon^{\frac{4}{3}},  
	\end{align*}
	for some $C=C(\nu,\lambda_1, \|f_\infty\|_{H^{-1}(\Omega)}, C_s)>0.$
	Recall that $u_\infty -u_\infty^N= w= v+z$ and $\|z\|_{H^1(\Omega)} \le C_\ell \|\wt{g}_\infty\|_{H^{1/2}(\Gamma)} \le C_\ell M \varepsilon^{\frac{2}{3}}$ imply
	\begin{align}
		\| u_\infty - u_\infty^N\|_{H^1(\Omega)} \le C   \varepsilon^{\frac{2}{3}}, 
	\end{align}
	for some $C=C(\nu,\lambda_1, \|f_\infty\|_{H^{-1}(\Omega)}, C_s)>0.$
	Now, to obtain the improved estimate using \eqref{eqn:stBurg_givenImpHypo}, we directly use $$\|z\|_{H^1(\Omega)} \le C \|\wt{g}_\infty\|_{H^{\frac{1}{2}}(\Gamma)} \le C \varepsilon,$$ by combining the estimates of $I_1,\ldots,I_6$. For the sake of repetitions, here we skip it. 
\end{proof}

\begin{Remark}
	An analogous result can be obtained for the stationary viscous Burgers equation with non-homogenous boundary data just by mimicking the above proof.
\end{Remark}

In the next theorem, we show that there exists a DNN $u_\infty^N \in \mathcal{F}_N$ that is ``close'' to the solution $u_\infty$ of \eqref{eqn:St-Burg} as well as it also approximates the corresponding cost functional.

\begin{Theorem} \label{th:existDNNerrorStVB}
	 For a given $f_\infty\in L^2(\Omega),$ let $u_\infty$ be the strong solution of \eqref{eqn:St-Burg}.  Then for arbitrary  $\varepsilon>0$, there exists a DNN $u_\infty^N \in \mathcal{F}_N$ such that 
	\begin{align*}
		\|u_\infty - u_\infty^N\|_{H^2(\Omega)} \le \varepsilon
	\end{align*}
	and
	\begin{align}\label{eqn-error}
		\Big\|-\nu \Delta u_\infty^N + \sum_{i=1}^d u_\infty^N \frac{\partial u_\infty^N}{\partial x_i} - f_\infty \Big\|^2 + \|\left.u_\infty^N\right|_\Gamma \|_{L^2(\Gamma)}^2 \le C\varepsilon^2,
	\end{align}
	for some $C=C(\nu, \|f_\infty\|, C_{Tr} )>0.$
\end{Theorem}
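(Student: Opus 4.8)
The plan is to combine the regularity of the exact solution with a Sobolev-space density statement for deep neural networks, and then propagate the resulting approximation error through the (smooth, locally Lipschitz) differential operator that defines the PINN residual. \emph{Step 1 (regularity and an $H^2$-accurate network).} Since $f_\infty\in L^2(\Omega)$, Theorem~\ref{thm:Stat-Burg-WP} gives that the strong solution $u_\infty$ of \eqref{eqn:St-Burg} lies in $H^2(\Omega)\cap H^1_0(\Omega)$ with $\|u_\infty\|_{H^2(\Omega)}\le C(\nu)\|f_\infty\|$. By our deep neural network approximation theorem in $H^m(\Omega)\cap H^1_0(\Omega)$ applied with $m=2$, for every $\varepsilon>0$ there is, for $N$ large enough, a network $u_\infty^N\in\mathcal F_N$ with $\|u_\infty-u_\infty^N\|_{H^2(\Omega)}\le\varepsilon$; in particular $\|u_\infty^N\|_{H^2(\Omega)}\le\|u_\infty\|_{H^2(\Omega)}+\varepsilon=:M$. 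This already gives the first assertion of the theorem.

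\emph{Step 2 (splitting the residual).} Write $w:=u_\infty^N-u_\infty$. Using that $u_\infty$ solves \eqref{eqn:St-Burg} and the algebraic identity $u_\infty^N\partial_{x_i}u_\infty^N-u_\infty\partial_{x_i}u_\infty=w\,\partial_{x_i}u_\infty^N+u_\infty\,\partial_{x_i}w$, we obtain
\[
-\nu\Delta u_\infty^N+\sum_{i=1}^d u_\infty^N\frac{\partial u_\infty^N}{\partial x_i}-f_\infty=-\nu\Delta w+\sum_{i=1}^d\Big(w\frac{\partial u_\infty^N}{\partial x_i}+u_\infty\frac{\partial w}{\partial x_i}\Big).
\]
\emph{Step 3 ($L^2$ estimates).} The linear term satisfies $\nu\|\Delta w\|\le\nu\|w\|_{H^2(\Omega)}\le\nu\varepsilon$. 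For the nonlinear terms, using H\"older's inequality and the embedding $H^2(\Omega)\hookrightarrow L^\infty(\Omega)$, valid for $d\in\{2,3\}$ (cf. Agmon's inequality, Lemma~\ref{lemVB:AgmonIE}, and Lemma~\ref{lemPR:SobEmb}),
\[
\Big\|w\frac{\partial u_\infty^N}{\partial x_i}\Big\|\le\|w\|_{L^\infty(\Omega)}\|\nabla u_\infty^N\|\le C\|w\|_{H^2(\Omega)}\|u_\infty^N\|_{H^1(\Omega)}\le CM\varepsilon,
\]
\[
\Big\|u_\infty\frac{\partial w}{\partial x_i}\Big\|\le\|u_\infty\|_{L^\infty(\Omega)}\|\nabla w\|\le C\|u_\infty\|_{H^2(\Omega)}\|w\|_{H^1(\Omega)}\le C\|f_\infty\|\varepsilon.
\]
Hence the $L^2$-norm of the residual is bounded by $C\varepsilon$, and squaring gives the first summand of \eqref{eqn-error} bounded by $C\varepsilon^2$.

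\emph{Step 4 (boundary term and conclusion).} Since $u_\infty\in H^1_0(\Omega)$, i.e. $\left.u_\infty\right|_\Gamma=0$, the trace theorem yields $\|\left.u_\infty^N\right|_\Gamma\|_{L^2(\Gamma)}=\|\left.w\right|_\Gamma\|_{L^2(\Gamma)}\le C_{Tr}\|w\|_{H^1(\Omega)}\le C_{Tr}\varepsilon$, so its square is bounded by $C_{Tr}^2\varepsilon^2$. Adding the two bounds proves \eqref{eqn-error} with a constant $C=C(\nu,\|f_\infty\|,C_{Tr})$. The genuinely new ingredient is Step~1: the assertion that a network of finite complexity in $\mathcal F_N$ can reproduce an $H^2\cap H^1_0$ function up to $\varepsilon$ in the $H^2$-norm, which is precisely the Sobolev-space extension of the classical $C^k$ neural-network approximation results and is where the work lies; Steps~2--4 are then routine, with the dimensional restriction $d\le3$ entering only through $H^2\hookrightarrow L^\infty$ in Step~3.
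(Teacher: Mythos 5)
Your Steps 2--4 are essentially the paper's argument: the same splitting of the residual through the exact equation, a H\"older estimate on the nonlinear difference terms, and the trace theorem for the boundary term. (The only cosmetic difference is that you estimate the nonlinear terms via $\|w\|_{L^\infty}\|\nabla u_\infty^N\|$ and $\|u_\infty\|_{L^\infty}\|\nabla w\|$ using $H^2(\Omega)\hookrightarrow L^\infty(\Omega)$ for $d\le 3$, whereas the paper uses the $L^4$--$L^4$ pairing $\|u_\infty-u_\infty^N\|_{L^4}\|\nabla u_\infty\|_{L^4}+\|u_\infty^N\|_{L^4}\|\nabla(u_\infty-u_\infty^N)\|_{L^4}$, which only needs $H^1\hookrightarrow L^4$; both are fine.)

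The gap is Step 1. You invoke a ``deep neural network approximation theorem in $H^m(\Omega)\cap H^1_0(\Omega)$'' as if it were an available external result, but no such standalone theorem exists in the paper: establishing exactly this fact is the bulk of the proof of the present theorem, and you correctly identify it as ``where the work lies'' without supplying the work. The paper's argument is: (i) by density of $C_c^\infty(\Omega)$ in $H^2(\Omega)\cap H^1_0(\Omega)$, pick $g\in C_c^\infty(\Omega)$ with $\|u_\infty-g\|_{H^2(\Omega)}\le \varepsilon/2$; (ii) since $g$ and all its derivatives are Lipschitz on $K=\overline{\Omega}$, apply the uniform $C^k$ approximation result of Xie--Cao (\cite{XieDNN_EA_CAMWA}, Theorem 3.3) to obtain $u_\infty^N\in\mathcal F_N$ with $\sup_{x\in K}|D^\alpha g(x)-D^\alpha u_\infty^N(x)|\le CLN^{-(3-|\alpha|)/d}$ for all $|\alpha|\le 2$; (iii) integrate these sup-norm bounds over the bounded domain to convert them into $\|g-u_\infty^N\|_{H^2(\Omega)}\le\varepsilon/2$ for $N\gtrsim(2CL\sqrt{|\Omega|}/\varepsilon)^d$, and conclude by the triangle inequality. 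Without this construction (or an equivalent one), the first assertion of the theorem, and hence everything downstream of it, is unproven. I would also note that the density step is the reason the statement is formulated for $u_\infty\in H^2(\Omega)\cap H^1_0(\Omega)$ specifically; simply quoting a generic ``Sobolev approximation'' result obscures where the boundary condition and the smooth activation functions enter.
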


\begin{proof}
Using Theorem \ref{thm:Stat-Burg-WP}, we have that that for a given $f_\infty\in L^2(\Omega),$ the strong solution, $u_\infty$, of \eqref{eqn:St-Burg} belongs to $H^2(\Omega)\cap H^1_0(\Omega).$ Our first aim is to find a DNN $u_\infty^N \in \mathcal{F}_N$ for a suitable $N$ such that 
\begin{align*}
	\|u_\infty - u_\infty^N\|_{H^2(\Omega)} \le \varepsilon.
\end{align*}
To achieve this, recall that $C^\infty_c(\Omega)$ is dense in $ H^{2}(\Omega) \cap H^1_0(\Omega)$. Therefore for the given $\varepsilon>0,$ there exists $g\in C^\infty_c(\Omega)$ such that $\| u_\infty-g\|_{H^2(\Omega)} \le \frac{\varepsilon}{2}.$ 
		
Since $g\in C^\infty_c(\Omega),$ there exists a compact set $\mathcal{K}\subset \Omega$ such that
\begin{align*}
\text{supp}(g) \subset \mathcal{K}, \, D^\alpha g=0 \text{ in }\Omega\backslash \mathcal{K} \ \text{ for all multiindices }\ \alpha. 
\end{align*}
Let us fix $K=\overline{\Omega}$. Note that $\mathcal{K}\subset\Omega\subset K$ and since, $g\in C^k(K)$ for all $k=1,2,\ldots,$ we can apply \cite[Theorem 3.3]{XieDNN_EA_CAMWA} for $g.$ For any multi-nidex $\beta$ with $|\beta|=2,$ $D^\beta g$ is a Lipschitz function on $K,$ and hence we have 
        \begin{align} \label{eq:lip}
            | D^\beta g(x) - D^\beta g(y)| \le L|x-y| \ \text{ for all }\  x,y \in K,
        \end{align}
for some constant $L>0$. Let $\alpha$ be any multiindex such that $|\alpha|\le 2.$ Then from \cite[Theorem 3.3]{XieDNN_EA_CAMWA}, we can find a $u_\infty^N$ represented by a DNN such that 
\begin{align*}
    \sup_{x\in K}|D^{\alpha}g (x) -D^\alpha u_\infty^N(x)| &  \le C \frac{1}{N^{ \frac{2-|\alpha|}{d} }} \,  \sup_{x,y \in K, \ |x-y|\le \frac{1}{N^{1/d}}} |D^\beta g(x) - D^\beta g(y)| \\
    & \le C \frac{1}{N^{ \frac{2-|\alpha|}{d} }} \, \sup_{ x,y \in K, \ |x-y|\le \frac{1}{N^{1/d}}} L |x-y| \le CL \frac{1}{N^{\frac{3-|\alpha|}{d}}},
\end{align*}
where in the second last inequality, we have  used \eqref{eq:lip}. 
We further deduce 
\begin{align*}
    \int_K \vert (D^{\alpha}g (x) - D^\alpha u_\infty^N(x)) \vert^2 dx \le \sup_{x\in K}|D^{\alpha}g (x) -D^\alpha u_\infty^N(x)|^2 |\Omega| \le (CL)^2 \frac{1}{N^{\frac{2(3-|\alpha|)}{d}}} |\Omega|.   
\end{align*}
Now, choosing $N$ sufficiently large such that 
\begin{align*}
    N^{\frac{3-|\alpha|}{d}} \ge \frac{2CL\sqrt{ |\Omega|}}{\varepsilon} \text{ for all multi-indices } \alpha \text{ such that }|\alpha|\le 2,  
\end{align*}
we obtain 
\begin{align*}
\|g-u_\infty^N\|_{H^2(K)} \le \frac{\varepsilon}{2}.
\end{align*}
It is sufficient to choose $N$ such that $N \ge \left( \frac{2CL\sqrt{ |\Omega|}}{\varepsilon} \right)^d.$
Therefore, we finally have 
\begin{align*}
\|u_\infty-u_\infty^N\|_{H^2(\Omega)} \le  \|u_\infty -g\|_{H^2(\Omega)} +  \|g-u_\infty^N\|_{H^2(\Omega)} \le \varepsilon,
\end{align*}

Also, note that 
	\begin{align*}
		\| u_\infty^N\|_{H^2(\Omega)} \le \|u_\infty - u_\infty^N\|_{H^2(\Omega)} + \|u_\infty\|_{H^2(\Omega)}  \le \varepsilon + \|u_\infty\|_{H^2(\Omega)} \le \varepsilon + C\|f_\infty\|,
	\end{align*} 
by using Theorem \ref{thm:Stat-Burg-WP}.	Now, using H\"older's inequality after replacing $f_\infty$ in the left hand side of \eqref{eqn-error} by  \eqref{eqn:St-Burg}, we obtain
	\begin{align*}
		\Big\|&-\nu\Delta u_\infty^N +  \sum_{i=1}^d u_\infty^N \frac{\partial u_\infty^N}{\partial x_i} - f_\infty \Big\|  = \Big\|-\nu \Delta u_\infty^N + \sum_{i=1}^d u_\infty^N \frac{\partial u_\infty^N}{\partial x_i} + \nu\Delta u_\infty - \sum_{i=1}^d u_\infty \frac{\partial u_\infty}{\partial x_i} \Big\| \\
		& \le \nu \| \Delta (u_\infty - u_\infty^N)\| + \Big\| \sum_{i=1}^d (u_\infty - u_\infty^N) \frac{\partial u_\infty}{\partial x_i}\Big\|  + \Big\| \sum_{i=1}^d  u_\infty^N \frac{\partial (u_\infty - u_\infty^N)}{\partial x_i}\Big\|  \\
		& \le \nu \|u_\infty - u_\infty^N\|_{H^2(\Omega)} + \|u_\infty - u_\infty^N\|_{L^4(\Omega)} \|\nabla u_\infty\|_{L^4(\Omega)}   + \|u_\infty^N\|_{L^4(\Omega)}\|\nabla (u_\infty - u_\infty^N)\|_{L^4(\Omega)}  \\
		& \le \varepsilon \left(  \nu  + \|\nabla u_\infty\|_{L^4(\Omega)} + \|u_\infty^N\|_{L^4(\Omega)} \right)\\
		& \le \varepsilon \left( \nu + C\varepsilon +C \|f_\infty\|\right),
	\end{align*}
	where the constant $C$ depends on $C_s, $ and $\nu.$ 
	
Let us now take $-\wt{f}_\infty=-\nu \Delta u_\infty^N + \sum_{i=1}^d u_\infty^N \frac{\partial u_\infty^N}{\partial x_i}-f_\infty$ and $-\wt{g}_\infty= \left.u_\infty^N\right|_\Gamma .$ Then $w:=u_\infty-u_\infty^N$ satisfies \eqref{eq:StBg w=u-uN}, and  since $u_\infty|_\Gamma=0$, we have
\begin{align*}
    \|\left.u_\infty^N\right|_\Gamma\|_{L^2(\Gamma)} &\le \| \left.u_\infty^N\right|_\Gamma - \left.u_\infty\right|_\Gamma\|_{L^2(\Gamma)} \le C\| \left.(u_\infty^N- u_\infty)\right|_\Gamma\|_{H^{\frac{1}{2}}(\Gamma)} \\&\le C_{Tr}\|u_\infty - u_\infty^N\|_{H^1(\Omega)} \le C_{Tr} \varepsilon.
\end{align*}
Combining these two estimates, we obtain the required result \eqref{eqn-error}.
\end{proof}

\begin{Remark}[Activation functions]
	In this paper and hence in the above theorem, we use smooth activation functions, that is, functions $\phi \in C^\infty(\mathbb{R})$ with non-vanishing derivatives at certain points (such as the sigmoid ($\sigma(x) = \frac{e^x}{e^x + 1}$), or the hyperbolic tangent $\tanh$ ($\tanh(x)= \frac{e^x -e^{-x}}{e^x+e^{-x}})$). These are infinitely differentiable and non-polynomial, which ensures that neural networks can approximate not only functions but also their derivatives with good accuracy (see \cite{XieDNN_EA_CAMWA}).
\end{Remark}

\section{Solvability of non-stationary viscous Burgers equation} \label{sec:WellposedNStVB}
Let $\Omega \subset \mathbb{R}^d,$ $d\in\{2,3\},$ be a convex domain or domain with $C^2$-boundary $\Gamma,$ and $T>0$ be fixed. Consider the viscous Burgers equation
\begin{equation} \label{eq:Burg}
	\left\{
\begin{aligned}
& \frac{\partial u}{\partial t} (x,t)- \nu \Delta u(x,t) + \sum_{i=1}^d u(x,t) \frac{\partial u}{\partial x_i}(x,t)  = f(x,t) \text{ in } \Omega \times (0,T), \\
& u(x,t)=0 \text{ on }\Gamma\times (0,T), \\
& u(x,0)=u_0(x) \text{ in } \Omega,
\end{aligned} \right.
\end{equation}
where $f \in L^2(0,T; L^2(\Omega))$ is given forcing term, and $\nu>0$ is a given constant (viscosity coefficient). 

Denote
\begin{align}
B(u)= \sum_{i=1}^d u \frac{\partial u}{\partial x_i} =\xi u \cdot \nabla u = \frac{1}{2}\sum_{i=1}^d \frac{\partial }{ \partial x_i}(u^2) = \frac{1}{2}\nabla \cdot (\xi u^2),
\end{align}
where $\xi=(1,1,\ldots,1)^{\top} \in \mathbb{R}^d,$ and an unbounded operator
\begin{align}
A=-\Delta:  D(A)\subset L^2(\Omega) \rightarrow L^2(\Omega), \text{ with } D(A)=H^2(\Omega) \cap H^1_0(\Omega).
\end{align}
Then the operator $(A,D(A))$ generates an analytic semigroup $\{e^{tA}\}_{t\ge 0}$ on $L^2(\Omega)$ (see \cite[Theorem 12.40]{RROG}). The mild form of \eqref{eq:Burg} can be written as 
\begin{align}\label{1p4}
u(t) = e^{-\nu t A}u_0 - \int_0^t e^{-\nu (t-s)A} B (u(s))ds + \int_0^t e^{-\nu (t-s)A} f(s) ds\  \text{ for all } \ 0<t<T.
\end{align}
Our first aim is to show the existence of a strong solution to the problem \eqref{eq:Burg}. Even though, the unique solvability results for the problem \eqref{eq:Burg} is available in the literature (\cite{TemamNSE01}), for any $f \in L^2(0,T; L^2(\Omega))$ and $u_0\in H_0^1(\Omega),$ we provide a proof of the existence of strong solutions by a fixed point argument and using an approximation method.

\subsection{Local solution}
We first start  by recalling some results which are crucial to establish the well-posedness of the problem \eqref{eq:Burg}.
\begin{Lemma}[\cite{MTM21-LpSol,KatoLpNSE}]
The semigroup $e^{-\nu t A}: L^p(\Omega) \longrightarrow L^q(\Omega)$ is bounded for all $1<p\le q <\infty, \, t>0$ and the following estimates hold:
\begin{align*}
& (i) \quad \|e^{-\nu t A}g\|_{L^q(\Omega)} \le C t^{-\frac{d}{2} (\frac{1}{p} - \frac{1}{q})} \|g\|_{L^p(\Omega)},\\
& (ii) \quad \|\nabla e^{-\nu t A}g\|_{L^q(\Omega)} \le C t^{- \frac{1}{2} -\frac{d}{2} (\frac{1}{p} - \frac{1}{q})} \|g\|_{L^p(\Omega)},
\end{align*}
for all $t\in (0,T),$ $g\in L^p(\Omega)$ and for some $C=C(p,q)>0.$
\end{Lemma}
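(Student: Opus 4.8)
The plan is to obtain both estimates from the classical ultracontractivity of the Dirichlet heat semigroup together with analyticity and the boundedness of the Riesz transform; since $\nu>0$ only rescales time ($e^{-\nu tA}=e^{-sA}$ with $s=\nu t$), it suffices to argue for $\nu=1$ and then substitute, the $\nu$-dependence being absorbed into the constant.

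First I would recall that, because $\Omega$ is bounded with $C^2$ (or convex) boundary, $A=-\Delta$ with $D(A)=H^2(\Omega)\cap H^1_0(\Omega)$ generates a positivity-preserving sub-Markovian semigroup on $L^2(\Omega)$ whose integral kernel $K_t(x,y)$ obeys the Gaussian upper bound $0\le K_t(x,y)\le C t^{-d/2}\exp(-c|x-y|^2/t)$ for $t\in(0,T]$ (by comparison with the free heat kernel on $\mathbb{R}^d$, or by Nash-type estimates; cf.\ \cite{MTM21-LpSol}). This gives at once $\|e^{-tA}\|_{\mathcal{L}(L^1(\Omega),L^\infty(\Omega))}\le C t^{-d/2}$, while the sub-Markovian property gives $\|e^{-tA}\|_{\mathcal{L}(L^p(\Omega),L^p(\Omega))}\le 1$ for every $1\le p\le\infty$. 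Interpolating (Riesz--Thorin) between the bound $L^1\to L^\infty$ and the contraction bound $L^p\to L^p$ — equivalently, among the three endpoint estimates $L^1\to L^1$, $L^1\to L^\infty$, $L^\infty\to L^\infty$ — yields $\|e^{-tA}g\|_{L^q(\Omega)}\le C t^{-\frac d2(\frac1p-\frac1q)}\|g\|_{L^p(\Omega)}$ for all $1\le p\le q\le\infty$, which is $(i)$.

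For $(ii)$ I would use analyticity of $\{e^{-tA}\}_{t\ge0}$ on $L^q(\Omega)$, $1<q<\infty$, together with the $L^q$-boundedness of the Dirichlet Riesz transform $\nabla A^{-1/2}$ (valid on smooth bounded domains). Factoring $\nabla e^{-sA}=(\nabla A^{-1/2})\,(A^{1/2}e^{-sA})$ and invoking the analytic-semigroup estimate $\|A^{1/2}e^{-sA}\|_{\mathcal{L}(L^q(\Omega),L^q(\Omega))}\le C s^{-1/2}$, we get $\|\nabla e^{-sA}\|_{\mathcal{L}(L^q(\Omega),L^q(\Omega))}\le C s^{-1/2}$. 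Splitting $e^{-tA}=e^{-tA/2}e^{-tA/2}$ and combining this with $(i)$ then gives
\[
\|\nabla e^{-tA}g\|_{L^q(\Omega)}\le \|\nabla e^{-tA/2}\|_{\mathcal{L}(L^q(\Omega),L^q(\Omega))}\,\|e^{-tA/2}g\|_{L^q(\Omega)}\le C\,t^{-\frac12}\,t^{-\frac d2\left(\frac1p-\frac1q\right)}\|g\|_{L^p(\Omega)},
\]
which is $(ii)$ after renaming constants and reinstating $\nu$.

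The main obstacle is not the interpolation/semigroup bookkeeping but the two domain-dependent inputs: the Gaussian heat-kernel upper bound (ultracontractivity) and the $L^q$-boundedness of the Dirichlet Riesz transform. Both are classical for the $C^2$ (or convex) domains considered here — the former via Nash/Moser iteration or kernel comparison, the latter via Calder\'on--Zygmund theory adapted to the Dirichlet Laplacian — and are exactly the facts for which one cites \cite{MTM21-LpSol,KatoLpNSE}; alternatively, $(ii)$ follows directly from the gradient heat-kernel bound $|\nabla_x K_t(x,y)|\le C t^{-(d+1)/2}\exp(-c|x-y|^2/t)$, which likewise holds on such domains.
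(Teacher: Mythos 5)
The paper does not prove this lemma at all: it is quoted verbatim from the cited references \cite{MTM21-LpSol,KatoLpNSE} and used as a black box, so there is no in-paper argument to compare yours against. Your proposal is a correct and essentially self-contained reconstruction of the standard proof of these $L^p$--$L^q$ smoothing estimates: the reduction to $\nu=1$ by time rescaling is sound, part $(i)$ follows exactly as you say from the domination of the Dirichlet heat kernel by the free Gaussian kernel (maximum principle) together with the sub-Markov contraction property and Riesz--Thorin interpolation among the endpoints $L^1\to L^1$, $L^\infty\to L^\infty$, $L^1\to L^\infty$, and part $(ii)$ follows from the factorization $\nabla e^{-tA}=(\nabla A^{-1/2})(A^{1/2}e^{-tA/2})e^{-tA/2}$ combined with $(i)$. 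The only inputs you must import are the ones you correctly identify as domain-dependent: ultracontractivity and the $L^q$-boundedness of the Dirichlet Riesz transform (or, equivalently, the gradient Gaussian bound $|\nabla_xK_t(x,y)|\le Ct^{-(d+1)/2}e^{-c|x-y|^2/t}$), both of which are classical for the convex or $C^2$ domains assumed in Section \ref{sec:WellposedNStVB}. One minor bookkeeping remark: since $\Omega$ is bounded, the constant necessarily depends on $\Omega$ and on $T$ as well as on $p,q$ (the paper's own statement restricts to $t\in(0,T)$ for this reason), which your final sentence implicitly acknowledges. No gap.
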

If we take $q=p$ and $p=p/2$ in the above lemma, we obtain
\begin{align}\label{3p1}
\| e^{-\nu t A}g\|_{L^p(\Omega)} \le C t^{-\frac{d}{2} (\frac{2}{p}-\frac{1}{p})}\|g\|_{L^{\frac{p}{2}}(\Omega)} =  Ct^{-\frac{d}{2p}} \|g\|_{L^{\frac{p}{2}}(\Omega)},
\end{align}
and
\begin{align}\label{3p2}
\| \nabla e^{-\nu t A}g\|_{L^p(\Omega)} \le C t^{-\frac{1}{2}-\frac{d}{2} (\frac{2}{p}-\frac{1}{p})}\|g\|_{L^{\frac{p}{2}}(\Omega)} =  Ct^{-\frac{1}{2}-\frac{d}{2p}} \|g\|_{L^{\frac{p}{2}}(\Omega)}.
\end{align}
Also, by using \eqref{3p1}-\eqref{3p2}, we have (see \cite[(1.8)]{MTM21-LpSol})
\begin{align}\label{3p2p5}
	\| e^{-\nu A t} \xi v\cdot \nabla u\|_{L^p(\Omega)} \le C t^{-\frac{1}{2} - \frac{d}{2p}}\|u\|_{L^p(\Omega)}\|v\|_{L^p(\Omega)} ,
\end{align}
or, in particular, we have 
\begin{align}\label{3p3}
\| e^{-\nu A t} B(u)\|_{L^p(\Omega)} = \| e^{-\nu A t} \frac{1}{2} \nabla \cdot (\xi u^2)\|_{L^p(\Omega)} \le C t^{-\frac{1}{2} - \frac{d}{2p}}\|u^2\|_{L^\frac{p}{2}(\Omega)} = C t^{-\frac{1}{2} - \frac{d}{2p}}\|u\|_{L^p(\Omega)}^2.
\end{align}
\begin{Definition}[Mild solution {\normalfont\cite[Definition 3.1, Ch. 3, Part II]{BDDM}}]
	For given $u_0\in L^p(\Omega)$ and $f\in L^\infty(0,T; L^2(\Omega)),$ for $2\leq p<\infty$, a function $u\in C([0,T]; L^p(\Omega))$ is called a mild solution of the problem \eqref{eq:Burg} if it satisfies \eqref{1p4}.
\end{Definition}
\begin{Theorem}[Local existence]\label{mild-solution}
	For any $p>d$, $u_0\in L^p(\Omega)$ and $f\in L^{\infty}(0,T;L^2(\Omega))$, there exists a unique local mild solution of the problem \eqref{eq:Burg}, that is, there exists a time $0<T^*<T$ such that $u\in C([0,T^*];L^p(\Omega))$ and \eqref{eq:Burg} is satisfied for all $0<t<T^*$. 
	\end{Theorem}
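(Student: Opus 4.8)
\smallskip
\noindent\textbf{Proof plan.}
The plan is to realise the local mild solution as the unique fixed point of the integral operator defined by the right‑hand side of \eqref{1p4}, using the Banach fixed point theorem. Set $\alpha:=\tfrac12+\tfrac{d}{2p}$; since $p>d$ we have $\alpha<1$, which is the structural fact that makes the scheme work. For $T^*\in(0,T)$ to be chosen, work in the Banach space $X_{T^*}:=C([0,T^*];L^p(\Omega))$ with norm $\vertiii{u}:=\sup_{0\le t\le T^*}\|u(t)\|_{L^p(\Omega)}$, and define $\Phi\colon X_{T^*}\to X_{T^*}$ by
\[
  \Phi(u)(t):=e^{-\nu tA}u_0-\int_0^t e^{-\nu(t-s)A}B(u(s))\,ds+\int_0^t e^{-\nu(t-s)A}f(s)\,ds ,
\]
so that fixed points of $\Phi$ are exactly the mild solutions of \eqref{eq:Burg} on $[0,T^*]$. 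That $\Phi$ is well defined, i.e.\ $\Phi(u)\in C([0,T^*];L^p(\Omega))$ whenever $u\in X_{T^*}$, follows from the fact that $\{e^{-\nu tA}\}$ acts as a bounded $C_0$-semigroup on $L^p(\Omega)$ (implicit in the $L^p$–$L^q$ smoothing estimates recalled above), together with the pointwise‑in‑$s$, integrable‑in‑$s$ bounds on the two integrands derived below, and dominated convergence; this is routine.

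The heart of the matter is to choose a closed ball $\overline B_R:=\{u\in X_{T^*}:\vertiii{u}\le R\}$, with $R:=2\sup_{0\le t\le T}\|e^{-\nu tA}u_0\|_{L^p(\Omega)}+2$ (finite, since the semigroup is bounded on $[0,T]$), on which $\Phi$ is a contraction for $T^*$ small. For $u\in\overline B_R$ the linear term contributes at most $R/2-1$ in $\vertiii{\cdot}$. For the nonlinear Duhamel term, writing $B(u)=\tfrac12\nabla\cdot(\xi u^2)$ and using \eqref{3p3},
\begin{align*}
  \Big\|\int_0^t e^{-\nu(t-s)A}B(u(s))\,ds\Big\|_{L^p(\Omega)}
  &\le C\int_0^t (t-s)^{-\alpha}\|u(s)\|_{L^p(\Omega)}^2\,ds
  \le \tfrac{C}{1-\alpha}\,R^2\,(T^*)^{1-\alpha},
\end{align*}
where integrability of $(t-s)^{-\alpha}$ is exactly where $p>d$ enters. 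For the forcing term, estimate $(i)$ of the smoothing lemma with exponents $2$ and $p$ gives $\|e^{-\nu(t-s)A}f(s)\|_{L^p(\Omega)}\le C(t-s)^{-\beta}\|f(s)\|_{L^2(\Omega)}$ with $\beta:=\tfrac d2(\tfrac12-\tfrac1p)$; since $d\le3$ one checks $\beta<1$, whence
\begin{align*}
  \Big\|\int_0^t e^{-\nu(t-s)A}f(s)\,ds\Big\|_{L^p(\Omega)}
  &\le \tfrac{C}{1-\beta}\,(T^*)^{1-\beta}\,\|f\|_{L^\infty(0,T;L^2(\Omega))}.
\end{align*}
Hence $\vertiii{\Phi(u)}\le R/2-1+C_1R^2(T^*)^{1-\alpha}+C_2(T^*)^{1-\beta}$, and choosing $T^*$ small enough that $C_1R^2(T^*)^{1-\alpha}+C_2(T^*)^{1-\beta}\le R/2$ yields $\Phi(\overline B_R)\subset\overline B_R$. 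For the contraction estimate I use the identity $B(u)-B(v)=\tfrac12\big[\xi(u-v)\cdot\nabla(u+v)+\xi(u+v)\cdot\nabla(u-v)\big]$ together with \eqref{3p2p5}, to get for $u,v\in\overline B_R$
\begin{align*}
  \|\Phi(u)(t)-\Phi(v)(t)\|_{L^p(\Omega)}
  &\le C\int_0^t (t-s)^{-\alpha}\big(\|u(s)\|_{L^p(\Omega)}+\|v(s)\|_{L^p(\Omega)}\big)\|u(s)-v(s)\|_{L^p(\Omega)}\,ds\\
  &\le C_3R(T^*)^{1-\alpha}\vertiii{u-v},
\end{align*}
so shrinking $T^*$ once more so that $C_3R(T^*)^{1-\alpha}\le\tfrac12$ makes $\Phi$ a strict contraction on $\overline B_R$.

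Banach's fixed point theorem then provides a unique $u\in\overline B_R$ with $\Phi(u)=u$; this is a local mild solution in $C([0,T^*];L^p(\Omega))$ satisfying \eqref{1p4}, and hence \eqref{eq:Burg}, for $0<t<T^*$. Uniqueness in the full class $C([0,T^*];L^p(\Omega))$ (not merely within $\overline B_R$) follows by a continuation argument: if $u,v$ are two mild solutions they are bounded on $[0,T^*]$, so on a sufficiently short interval $[0,\tau]$ the difference estimate above with the supremum over $[0,\tau]$ forces $u\equiv v$ there, and iterating on $[\tau,2\tau],\dots$ exhausts $[0,T^*]$.

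I expect the only genuinely delicate point to be the singular kernel $(t-s)^{-\alpha}$, $\alpha=\tfrac12+\tfrac{d}{2p}$, produced by the gradient smoothing estimates \eqref{3p2p5}–\eqref{3p3}: its time‑integrability is precisely the borderline assumption $p>d$ of the statement, and one has to keep careful track of the resulting positive powers $(T^*)^{1-\alpha},(T^*)^{1-\beta}$ so that both the self‑mapping constant and the contraction constant can be made as small as desired. Verifying the $L^p$-continuity in time of the two Duhamel integrals in the presence of this integrable singularity is the other place needing (standard) care; everything else reduces to Hölder's inequality and the estimates \eqref{3p1}–\eqref{3p3} already recorded above.
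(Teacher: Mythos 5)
Your proposal is correct and follows essentially the same route as the paper: the paper runs the Picard iteration $u_{n+1}=u_0(t)+G(u_n)(t)$ explicitly, proves uniform boundedness of the iterates and geometric decay of the differences using exactly the smoothing estimates \eqref{3p1}--\eqref{3p3} with the same exponents $\tfrac12+\tfrac{d}{2p}<1$ (from $p>d$) and $\tfrac d2(\tfrac12-\tfrac1p)<1$, which is just the constructive form of your contraction-mapping argument on a ball of $C([0,T^*];L^p(\Omega))$. The only cosmetic difference is that the paper obtains uniqueness by shrinking $T^*$ so that the Lipschitz constant is below one, whereas you use a continuation argument; both are standard and valid.
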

	\begin{proof}
We use an iterative technique to prove the existence of a local mild solution of the problem \eqref{eq:Burg} (\cite[Ch. 3]{TemamNSE01}).  Let us start with the first iteration $u_0(t)=e^{-\nu t A} u_0,$ and denote
\begin{align} \label{eq:u_n+1}
u_{n+1}(t)=u_0(t)+G(u_n)(t), \, n=0,1,2,\ldots,
\end{align}
where
\begin{align}
G(u_n)(t)= - \int_0^t e^{-\nu (t-s)A} B(u_n(s))ds + \int_0^t e^{-\nu (t-s)A} f(s) ds.
\end{align}
Note that $$\|u_0(t)\|_{L^p(\Omega)} = \|e^{-\nu t A}u_0\|_{L^p(\Omega)} \le C \|u_0\|_{L^p(\Omega)}<\infty.$$ Furthermore, by using \eqref{3p3}, we have 
\begin{align}\label{3p7}
\|u_{n+1}(t)\|_{L^p(\Omega)} & \le \|u_0(t)\|_{L^p(\Omega)} + \int_0^t \|e^{-\nu (t-s)A }B(u_n(s))\|_{L^p(\Omega)} ds \nonumber \\
& \qquad \quad +\int_0^t \|e^{-\nu (t-s)A }f(s)\|_{L^p(\Omega)} ds\nonumber\\
& \le C \Big(\|u_0\|_{L^p(\Omega)} +  \int_0^t (t-s)^{-\frac{1}{2}-\frac{d}{2p}} \|u_n(s)\|_{L^p(\Omega)}^2 ds \nonumber \\
& \qquad \quad  +  \int_0^t (t-s)^{ -\frac{d}{2} (\frac{1}{2} - \frac{1}{p}) }\|f(s)\|ds\Big) \nonumber \\
& \le C\|u_0\|_{L^p(\Omega)} + C \sup_{s\in [0,t]}\|u_n(s)\|_{L^p(\Omega)}^2 \int_0^t (t-s)^{-\frac{1}{2}-\frac{d}{2p}} ds \nonumber \\
& \qquad + C \sup_{s\in [0,t]}\|f(s)\| \int_0^t (t-s)^{ -\frac{d}{2} (\frac{1}{2} - \frac{1}{p}) } ds.
\end{align}
It is easy to check that $\int_0^t (t-s)^{-\frac{1}{2}-\frac{d}{2p}} ds$ is finite when $p>d$ and $\int_0^t (t-s)^{ -\frac{d}{2} (\frac{1}{2} - \frac{1}{p}) } ds$ is finite provided $\frac{1}{p} > \frac{d-4}{2d}.$ Therefore
\begin{align}
&\sup_{t\in [0,T]} \|u_{n+1}(t)\|_{L^p(\Omega)}\nonumber\\& \le C\|u_0\|_{L^p(\Omega)} + C \|f\|_{L^\infty(0,T;L^2(\Omega))} T^{1 - \frac{d}{2} (\frac{1}{2}-\frac{1}{p})} + C \sup_{t\in [0,T]} \|u_n(t)\|_{L^p(\Omega)}^2 T^{\frac{1}{2} - \frac{d}{2p}}.
\end{align}

\noindent For all $n=1,2,\ldots,$ set $$\displaystyle f_n:=\sup_{t\in [0,T]} \|u_n(t)\|_{L^p(\Omega)} \text{ and } f_0:=\|u_0\|_{L^p(\Omega)} + C \|f\|_{L^\infty(0,T;L^2(\Omega))} T^{1 - \frac{d}{2} (\frac{1}{2}-\frac{1}{p})}$$ to obtain 
\begin{align}
f_{n+1} \le f_0 + C T^{\frac{1}{2} - \frac{d}{2p}} f_n^2 \ \text{ for all }\ n=1,2,\ldots .
\end{align}
Now, we fix $T_1^* <T $ such that 
\begin{align*}
    f_0 \le \frac{1}{8C T^{\frac{1}{2}-\frac{d}{2p}}}, \text{ or in particular, we take }T_1^*=\frac{1}{(8Cf_0)^{2p/(p-d)}}.
\end{align*}
Using induction, one can easily show that $f_n \le \frac{1}{4C T^{\frac{1}{2}-\frac{d}{2p}}}$ for all $n\ge 1,$ that is, the sequence $\{f_n\}_{n\in \mathbb{N}}$
 is uniformly bounded. 

We define $v_{n+2}(\cdot),$ $n=0,1,2,\ldots,$ by using   \eqref{eq:u_n+1} as
\begin{align*}
    v_{n+2}(t) :=u_{n+2}(t)-u_{n+1}(t) & = - \int_0^t e^{-\nu (t-s)A} B(u_{n+1}(s))ds + \int_0^t e^{-\nu (t-s)A} B(u_n(s))ds \\
    & = - \int_0^t e^{-\nu (t-s)A} (B(u_{n+1}(s)) - B( u_n(s)))ds.
\end{align*}
Once again \eqref{3p2p5}-\eqref{3p3} yield
\begin{align}\label{eqn:est vn+2}
  &  \|v_{n+2}(t)\|_{L^p(\Omega)}\nonumber \\& \le \int_0^t \|e^{-\nu (t-s)A} (B(u_{n+1}(s)) - B(u_n(s)))\|_{L^p(\Omega)} ds\nonumber \\
    & \le \int_0^t \|e^{-\nu (t-s)A} u_{n+1}(s)\xi \cdot \nabla \left( u_{n+1}(s) - u_n(s)\right) \nonumber\\
    & \qquad \qquad + e^{-\nu (t-s)A} \xi \left( u_{n+1}(s) - u_n(s)\right) \cdot \nabla u_n(s)\|_{L^p(\Omega)} ds\nonumber \\
    & \le C \int_0^t (t-s)^{-\frac{1}{2}-\frac{d}{2p}}  \left(\|u_{n+1}(s)\|_{L^p(\Omega)} + \|u_n(s))\|_{L^p(\Omega)}\right) \|u_{n+1}(s) - u_n(s))\|_{L^p(\Omega)} ds\nonumber \\
    & \le C\left( \sup_{s\in [0,t]} \|u_{n+1}(s)\|_{L^p(\Omega)} + \sup_{s\in [0,t]} \|u_{n}(s)\|_{L^p(\Omega)} \right) \sup_{s\in [0,t] } \|u_{n+1}(s) - u_n(s))\|_{L^p(\Omega)} \nonumber\\
    & \qquad \qquad \times \int_0^t (t-s)^{-\frac{1}{2}-\frac{d}{2p}} ds \nonumber\\
    & \le K \sup_{s\in [0,t]} \|v_{n+1}(s)\|_{L^p(\Omega)} T^{\frac{1}{2}-\frac{d}{2p}} \le \cdots \le  K^{n+1} (T^{\frac{1}{2}-\frac{d}{2p}})^{n+1}, \, n=0,1,2,\ldots,
\end{align} 
where $K = \displaystyle C\left( \sup_{s\in [0,t]} \|u_{n+1}(s)\|_{L^p(\Omega)} + \sup_{s\in [0,t]} \|u_{n}(s)\|_{L^p(\Omega)} \right).$
Now, note that 
\begin{align} \label{eqn:sum u_n}
	u_n(t)=u_0(t)+\sum_{m=0}^{n-1} v_{m+1}(t)
\end{align}
 and therefore
 \begin{equation} \label{3p11}
\begin{aligned}
\sup_{t\in [0,T]} \|u_n(t)\|_{L^p(\Omega)} & \le \sup_{t\in [0,T]} \left( \|u_0(t)\|_{L^p(\Omega)} + \sum_{m=1}^{n-1}\|v_{m+1}(t)\|_p\right) \\
& \le \|u_0\|_{L^p(\Omega)} + \sum_{m=0}^{n-1} K^{m+1} (T^{\frac{1}{2}-\frac{d}{2p}})^{m+1}.
\end{aligned}
\end{equation}
Note that as $n\to\infty$, the series in the  right-hand side of \eqref{eqn:sum u_n} converges, provided  $KT^{\frac{1}{2}-\frac{d}{2p}}<1,$ that is, $T< \frac{1}{K^{\frac{2p}{p-d}}}=:T_2^*.$  Therefore, for all $0<t<T^*,$ where $T^*:=\min\{T_1^*, T_2^*\},$ we denote the sum as $u(t)$ as 
\begin{align}\label{3p13}
	u(t):=\lim_{n\rightarrow \infty}u_n(t) \text{ in }L^p(\Omega)\text{ for all }t\in (0,T^*). 
\end{align}  
By using the estimate \eqref{3p3}, a calculation similar to \eqref{eqn:est vn+2} and the uniform convergence of $u_n(t)$ to $u(t)$ , one can show that $G(u_n(t))\to G(u(t))$ as $n\to\infty$.  Therefore, we infer 
\begin{align*}
	u(t)=u_0+G(u(t)), \text{ in }L^p(\Omega) \text{ for all } t\in (0,T^*),
\end{align*}
which is a mild solution of the problem \eqref{eq:Burg}. 

\medskip 
\noindent \textbf{Uniqueness.}  Let $u_1(\cdot)$ and $u_2(\cdot)$ be two mild solutions of \eqref{eq:Burg} for $0<t<T^*.$ Then $u_1(t)$ and $u_2(t)$ satisfy in $L^p(\Omega)$
\begin{align*}
	u_i(t) = e^{-\nu t A}u_0 - \int_0^t e^{-\nu (t-s)A} B (u_i(s))ds + \int_0^t e^{-\nu (t-s)A} f(s) ds\  \text{ for all } \ 0<t<T^*, 
\end{align*}
for $ i=1,2.$ Subtracting one among another and proceeding in an analogous way as in \eqref{eqn:est vn+2}, we obtain
\begin{align*}
	\|u_1(t)-u_2(t)\|_{L^p(\Omega)} &  \le C \left( \sup_{s\in [0,T^*]}\|u_1(s)\|_{L^p(\Omega)} +  \sup_{s\in [0,T^*]}\|u_2(s)\|_{L^p(\Omega)}\right) \\
	& \qquad \quad \times \sup_{s\in [0,T^*]} \|u_1(s) - u_2(s)\|_{L^p(\Omega)} (T^*)^{\frac{1}{2}-\frac{d}{2p}}  \\
	& \le K_1 (T^*)^{\frac{1}{2}-\frac{d}{2p}} \sup_{s\in [0,T^*]} \|u_1(s) - u_2(s)\|_{L^p(\Omega)} ,
\end{align*}
where $ \displaystyle K_1= C \left( \sup_{s\in [0,T^*]}\|u_1(s)\|_{L^p(\Omega)} +  \sup_{s\in [0,T^*]}\|u_2(s)\|_{L^p(\Omega)}\right).$ Now, one can choose $T^*$ such that $ K_1 (T^*)^{\frac{1}{2}-\frac{d}{2p}}<1.$ Thus $u\in C([0,T^*]; L^p(\Omega))$ is the unique mild solution of \eqref{eq:Burg}.
\end{proof}

\subsection{Global solution} 
Let us now prove that the mild solution of the problem \eqref{eq:Burg} is regular  and it  exists globally. 
\begin{Definition}[Strong solution]
	A function $u\in C([0,T]; H^1_0(\Omega))\cap L^2(0,T; H^2(\Omega))$ with $\partial_tu\in L^2(0,T; L^2(\Omega))$ is called a \emph{strong solution} to the system \eqref{eq:Burg} if for $u_0\in H^1_0(\Omega), \, f\in L^2(0,T; L^2(\Omega)),$ $u(\cdot)$ satisfies \eqref{eq:Burg} in $L^2(\Omega)$ for a.e. $t\in [0,T].$
	\end{Definition}
	
	\begin{Remark}\label{Rem3.5}
		The term $\int_0^t (t-s)^{ -\frac{d}{2} (\frac{1}{2} - \frac{1}{p}) }\|f(s)\|ds $ appearing in \eqref{3p7} can be estimated in the following way also:
		\begin{align*}
			\int_0^t (t-s)^{ -\frac{d}{2} (\frac{1}{2} - \frac{1}{p}) }\|f(s)\|ds &\leq \left(\int_0^t (t-s)^{ -d (\frac{1}{2} - \frac{1}{p}) }ds\right)^{1/2}\left(\int_0^t\|f(s)\|^2 ds\right)^{1/2}\nonumber\\&\leq CT^{1-d (\frac{1}{2} - \frac{1}{p})}\left(\int_0^T\|f(s)\|^2ds\right)^{1/2}<\infty,
		\end{align*}
		provided $2\leq p<\frac{2d}{d-2}$. That is, Theorem \ref{mild-solution} holds true for $f\in L^2(0,T;L^2(\Omega))$ also. 
	\end{Remark}
	
\begin{Theorem} \label{thm:GblSolHomBurg}
	For any $u_0 \in H^1_0(\Omega)$ and  $f\in L^2(0,T;L^2(\Omega)),$ there exists a unique strong solution 
	$$u\in L^{\infty}(0,T;H_0^1(\Omega))\cap L^2(0,T;D(A))$$ 
to 	the problem \eqref{eq:Burg} satisfying
\begin{align}\label{eqn-ener}
	\sup_{t\in[0,T]}\|\nabla u(t)\|^2+\nu\int_0^T\|Au(t)\|^2dt\leq C(\|\nabla u_0\|,\|f\|_{L^2(0,T;L^2(\Omega))},\nu,T).
\end{align} 
	\end{Theorem}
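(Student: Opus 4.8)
The plan is to construct the strong solution by a Galerkin (spectral) approximation, derive two energy estimates uniform in the truncation parameter together with a bound on the time derivative, pass to the limit by Aubin--Lions compactness, and obtain uniqueness from an energy estimate for the difference of two solutions; Theorem~\ref{mild-solution} already gives short-time existence, so the content here is global regularity and the quantitative bound \eqref{eqn-ener}. Concretely, let $\{e_j\}_{j\ge1}$ be the $L^2(\Omega)$-orthonormal eigenfunctions of $A=-\Delta$ on $D(A)=H^2(\Omega)\cap H^1_0(\Omega)$ with eigenvalues $0<\lambda_1\le\lambda_2\le\cdots$, let $P_m$ be the orthogonal projection onto $V_m:=\operatorname{span}\{e_1,\dots,e_m\}$, and look for $u_m(t)=\sum_{j=1}^m c_j^m(t)e_j$ solving $u_m'(t)+\nu Au_m(t)+P_mB(u_m(t))=P_mf(t)$, $u_m(0)=P_mu_0$; since $B$ is quadratic, this ODE system is locally solvable and the a priori bounds below make it global on $[0,T]$. (A mollification/truncation of $B$ would work equally well as the approximate problem.)

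\emph{Energy estimates.} Testing with $u_m$ and using the cancellation $(B(v),v)=\tfrac13\sum_{i=1}^d\int_\Omega\partial_{x_i}(v^3)\,dx=0$ for $v\in H^1_0(\Omega)$, one gets $\tfrac12\tfrac{d}{dt}\|u_m\|^2+\nu\|\nabla u_m\|^2=(f,u_m)$, and Cauchy--Schwarz, Poincar\'e and Young's inequality~\eqref{eqPR-YoungIneq} give, after integrating, $\sup_{t\in[0,T]}\|u_m(t)\|^2+\nu\int_0^T\|\nabla u_m(t)\|^2\,dt\le\|u_0\|^2+C\nu^{-1}\|f\|_{L^2(0,T;L^2(\Omega))}^2$, uniformly in $m$. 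Testing next with $Au_m$ yields $\tfrac12\tfrac{d}{dt}\|\nabla u_m\|^2+\nu\|Au_m\|^2=(f,Au_m)-(B(u_m),Au_m)$ with $(f,Au_m)\le\tfrac\nu4\|Au_m\|^2+\nu^{-1}\|f\|^2$, and integrating by parts (using $u_m|_\Gamma=0$) gives the identity $(B(u_m),Au_m)=\tfrac12\int_\Omega(\xi\cdot\nabla u_m)\,|\nabla u_m|^2\,dx$, so $|(B(u_m),Au_m)|\le C\|\nabla u_m\|_{L^3(\Omega)}^3$. By the Gagliardo--Nirenberg inequality (Theorem~\ref{thm:G-NinBdd}) and elliptic regularity $\|u_m\|_{H^2(\Omega)}\le C\|Au_m\|$ on the convex or $C^2$ domain, $\|\nabla u_m\|_{L^3(\Omega)}^3\le C\|\nabla u_m\|^{2}\|Au_m\|$ when $d=2$ and $\le C\|\nabla u_m\|^{3/2}\|Au_m\|^{3/2}$ when $d=3$; Young's inequality then absorbs $\tfrac\nu4\|Au_m\|^2$ into the left side, leaving $\tfrac{d}{dt}\|\nabla u_m\|^2+\nu\|Au_m\|^2\le C\|f\|^2+C\|\nabla u_m\|^{2\gamma}$ with $\gamma=2$ ($d=2$) or $\gamma=3$ ($d=3$).

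\emph{Closing the $H^1$ estimate --- the main obstacle.} When $d=2$, write $\|\nabla u_m\|^{4}=\bigl(C\|\nabla u_m(t)\|^2\bigr)\|\nabla u_m(t)\|^2=:g(t)\|\nabla u_m(t)\|^2$ with $g\in L^1(0,T)$ by the first estimate; the linear Gronwall inequality (Lemma~\ref{lem:Gronwall}) then gives $\sup_{t\in[0,T]}\|\nabla u_m(t)\|^2+\nu\int_0^T\|Au_m(t)\|^2\,dt\le C(\|\nabla u_0\|,\|f\|_{L^2(0,T;L^2(\Omega))},\nu,T)$. When $d=3$ the surviving term is $\sim\|\nabla u_m\|^6$ and the differential inequality is superlinear in $\|\nabla u_m\|^2$; obtaining a global bound for arbitrary data requires exploiting the scalar Burgers structure --- a finer interpolation via Agmon's inequality (Lemma~\ref{lemVB:AgmonIE}) and the Sobolev embedding (Lemma~\ref{lemPR:SobEmb}) together with the $L^\infty(0,T;L^2(\Omega))\cap L^2(0,T;H^1_0(\Omega))$ control and the nonlinear Gronwall inequality of Theorem~\ref{lem:NonlinGronwall} (alternatively, an $L^\infty$-bound of maximum-principle type, applicable once the smoothing of the mild solution from Theorem~\ref{mild-solution} has improved the data). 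This step is the crux of the proof.

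\emph{Passage to the limit and uniqueness.} From the equation, $\|u_m'\|_{L^2(0,T;L^2(\Omega))}\le\nu\|Au_m\|_{L^2(0,T;L^2(\Omega))}+\|B(u_m)\|_{L^2(0,T;L^2(\Omega))}+\|f\|_{L^2(0,T;L^2(\Omega))}$, while $\|B(u_m)\|\le C\|u_m\|_{L^4(\Omega)}\|\nabla u_m\|_{L^4(\Omega)}\le C\|u_m\|_{H^1(\Omega)}\|u_m\|_{H^2(\Omega)}$ by Lemma~\ref{lemPR:SobEmb}; hence $\{u_m'\}$ is bounded in $L^2(0,T;L^2(\Omega))$ and $\{u_m\}$ in $L^\infty(0,T;H^1_0(\Omega))\cap L^2(0,T;D(A))$, so Aubin--Lions produces a subsequence converging strongly in $L^2(0,T;H^1_0(\Omega))$ and weak-$\ast$ in these spaces, which is enough to pass to the limit in $P_mB(u_m)$; the limit $u$ is a strong solution of \eqref{eq:Burg} and satisfies \eqref{eqn-ener} by weak lower semicontinuity. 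For uniqueness, if $u_1,u_2$ are strong solutions then $w:=u_1-u_2$ has $w(0)=0$, and testing $\partial_tw+\nu Aw+B(u_1)-B(u_2)=0$ with $w$ and writing $B(u_1)-B(u_2)=\tfrac12\nabla\cdot\bigl(\xi(u_1+u_2)w\bigr)$ gives $\tfrac12\tfrac{d}{dt}\|w\|^2+\nu\|\nabla w\|^2=\tfrac12\int_\Omega(u_1+u_2)\,w\,(\xi\cdot\nabla w)\,dx\le\tfrac\nu2\|\nabla w\|^2+C\|u_1+u_2\|_{L^\infty(\Omega)}^2\|w\|^2$; since $\|u_i\|_{L^\infty(\Omega)}^2\le C\|u_i\|_{H^1(\Omega)}\|u_i\|_{H^2(\Omega)}\in L^1(0,T)$ by Agmon's inequality (Lemma~\ref{lemVB:AgmonIE}) and the a priori bounds, Lemma~\ref{lem:Gronwall} forces $w\equiv0$.
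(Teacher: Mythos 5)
Your overall architecture (approximate problem, $L^2$ and $H^1$ energy estimates, Aubin--Lions, uniqueness via Gronwall) parallels the paper's, and your $d=2$ argument and the uniqueness step are sound. But the proof is not complete: for $d=3$ you arrive at $\frac{d}{dt}\|\nabla u_m\|^2+\nu\|Au_m\|^2\le C\|f\|^2+C\|\nabla u_m\|^6$, correctly identify that this superlinear inequality cannot be closed by linear Gronwall, and then only gesture at ``a finer interpolation \ldots and the nonlinear Gronwall inequality'' without carrying it out. That step is the mathematical content of the theorem in three dimensions, and the tools you name do not obviously supply it: Theorem~\ref{lem:NonlinGronwall} handles \emph{sublinear} perturbations ($\zeta^\varepsilon$ with $\varepsilon<1$), not the term $(\|\nabla u_m\|^2)^3$ you are facing. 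The paper's resolution is a specific exploitation of the scalar structure that you have not identified: test the equation with $|u|^{p-2}u$, for which the Burgers nonlinearity cancels exactly, $(B(u),|u|^{p-2}u)=0$; the resulting inequality is sublinear in $\|u\|_{L^p}^p$ (this is where Theorem~\ref{lem:NonlinGronwall} is actually used) and yields a uniform bound in $L^\infty(0,T;L^p(\Omega))$ for $p$ up to $6$, admissible because $u_0\in H^1_0(\Omega)\hookrightarrow L^6(\Omega)$. With that bound, $\|B(u)\|\le\|u\|_{L^p(\Omega)}\|\nabla u\|_{L^{2p/(p-2)}(\Omega)}$ is interpolated by Gagliardo--Nirenberg between $\|u\|_{L^p(\Omega)}$ and $\|Au\|$, and the $H^1$ differential inequality becomes \emph{linear} in $\|\nabla u\|^2$ with integrable coefficient $\|u\|_{L^p(\Omega)}^{(p+6)/(p-3)}$, so ordinary Gronwall closes it.

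A second, related problem: the missing $L^p$ estimate is incompatible with your chosen approximation. In a spectral Galerkin scheme $|u_m|^{p-2}u_m\notin V_m$, so you cannot test the projected equation with it. This is precisely why the paper does not use Galerkin here: it mollifies the data, produces classical solutions of the approximate problems via the mild-solution theory of Theorem~\ref{mild-solution} together with parabolic regularity, and only then performs the $L^p$ and $H^1$ energy estimates on genuine solutions. If you wish to keep the Galerkin framework you would need a different route to an a priori bound strong enough to control $(B(u_m),Au_m)$ in 3D; as written, the proposal does not prove the theorem for $d=3$.
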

	\begin{proof}
Let us take $u_0 \in H^1_0(\Omega).$ Since $C_c^\infty(\Omega)$ is dense in $H^1_0(\Omega),$ we can find a sequence $u_0^n \in C_c^\infty(\Omega)$ such that $u_0^n \longrightarrow u_0$ in $H^1_0(\Omega)$ as $n \rightarrow \infty.$ Once again by a density argument, for any $f\in L^2((0,T)\times \Omega),$ we can find a sequence $f^n\in C_c^\infty((0,T)\times\Omega))$ such that $f^n\longrightarrow f$ in $L^2((0,T)\times \Omega)\cong L^2(0,T;L^2(\Omega))$.  Let us consider the following problem: 
\begin{equation}\label{3.11}
	\left\{
\begin{aligned}
& \frac{du^n(t)}{dt}+\nu Au^n(t)+B(u^n(t)) = f^n(t) \ \text{ for  } \ t\in (0,T),\\
& u^n(0)= u_0^n.
\end{aligned}
\right. 
\end{equation}
By Theorem \ref{mild-solution} and Remark \ref{Rem3.5},  for any $d< p<\frac{2d}{d-2}$, we obtain the existence of a local mild solution to the problem \eqref{3.11} such that 
	\begin{align}
		\sup_{t\in[0,T^*]}\|u^n(t)\|_{L^p(\Omega)}\leq C(1+\|u_0^n\|_{L^p(\Omega)}+\|f^n\|_{L^{2}((0,T)\times\Omega)}).
	\end{align}
	By Sobolev's embedding, we know that $d< p<\frac{2d}{d-2}$, the space $H_0^1(\Omega)\hookrightarrow L^p(\Omega)$ and the fact that $u_0^n \longrightarrow u_0$ in $H^1_0(\Omega)$ and $f_n\longrightarrow f$ in $L^2((0,T)\times \Omega)$ as $n \rightarrow \infty,$ we deuce from the above relation that 
		\begin{align}
		\sup_{t\in[0,T^*]}\|u^n(t)\|_{L^p(\Omega)}\leq C(1+\|u_0\|_{L^p(\Omega)} + \|f\|_{L^2(0,T;L^2(\Omega))})
	\end{align}
	and a calculation similar to \eqref{3p13} yields 
	\begin{align}\label{3p17}
	u^n \longrightarrow u\text{ in } C([0, T^*]; L^p(\Omega)).
	\end{align}
	Moreover, $u$ is the local mild solution of the problem \eqref{eq:Burg} satisfying \eqref{1p4}.   
	
	Since $u_0^n \in C_c^\infty(\Omega)$  and $f^n\in C_c^\infty((0,T)\times\Omega)),$  by using a standard parabolic theory (cf. \cite[Theorem 7.4, Chapter V]{Lady68}), we can find a smooth (or classical) solution for the  problem \eqref{3.11}. 

Taking the inner-product with $|u^n|^{p-2}u^n,$ we find
\begin{align}\label{eq:wkform_p-2}
& \left( \frac{du^n}{dt}, |u^n|^{p-2}u^n\right)+\nu \left( Au^n, |u^n|^{p-2}u^n\right) + \left(B(u^n), |u^n|^{p-2}u^n\right) = \left(f^n , |u^n|^{p-2}u^n\right).
\end{align}
Performing  integration by parts, we have 
\begin{align*}
    \left( Au^n, |u^n|^{p-2}u^n \right) & = -\int_\Omega \Delta u^n |u^n|^{p-2}u^n dx = -\sum_{i=1}^d \int_\Omega \frac{\partial^2 u^n}{\partial x_i^2} |u^n|^{p-2}u^n dx \\
    & =\sum_{i=1}^d\int_\Omega \frac{\partial u^n}{\partial x_i} \left( |u^n|^{p-2} \frac{\partial u^n}{\partial x_i} + \frac{p-2}{2}u^n  (|u^n|^2)^{p-2/2} 2 u^n \frac{\partial u^n}{\partial x_i}\right) dx \\
    & =(p-1) \sum_{i=1}^d\int_\Omega |u^n|^{p-2} \bigg(\frac{\partial u^n}{\partial x_i}\bigg)^2 dx.
\end{align*}
Furthermore, using  integration by parts again 
\begin{align*}
    \left( B(u^n), |u^n|^{p-2}u^n\right)& = \left( \frac{1}{2}\nabla \cdot (\xi (u^n)^2), |u^n|^{p-2}u^n\right)  = \frac{1}{2}\left( \sum_{i=1}^d \frac{\partial }{\partial x_i}(u^n)^2,  |u^n|^{p-2}u^n \right) \\
   & = -\frac{p-1}{2} \sum_{i=1}^d \left( (u^n)^2, |u^n|^{p-2} \frac{\partial u^n}{\partial x_i}\right) = -\frac{p-1}{2} \left( B(u^n), |u^n|^{p-2}u^n\right) ,
\end{align*}
so that $ \left( B(u^n), |u^n|^{p-2}u^n\right)=0.$ Let us now estimate right hand side of \eqref{eq:wkform_p-2}. Note that for $2< p<\frac{2d}{d-2}$, by using Sobolev's embedding, H\"older's and Young's inequalities, we deduce 
\begin{align*}
\big\vert\left(f^n , |u^n|^{p-2}u^n\right)\big\vert & = \Big\vert\left(|u^n|^{\frac{p-2}{2}}f^n , |u^n|^{\frac{p-2}{2}}u^n\right)\Big\vert \\
& \le \big\| \nabla(|u^n|^{\frac{p-2}{2}}u^n)\big\| \big\| |u^n|^{\frac{p-2}{2}}f^n\big\|_{H^{-1}(\Omega)} \\
& \le \frac{Cp}{2}\big\||u^n|^{\frac{p-2}{2}}\nabla u^n\big\| \big\| |u^n|^{\frac{p-2}{2}} f^n \big\|_{L^{\frac{p}{p-1}}(\Omega)} \\
& \le C \big\||u^n|^{\frac{p-2}{2}}\nabla u^n \big\| \big\| |u^n|^{\frac{p-2}{2}} \big\|_{L^{\frac{2p}{p-2}}(\Omega)}\|f^n\| \\
& \le C \big\||u^n|^{\frac{p-2}{2}}\nabla u^n \big\| \big\|u^n\big\|_{L^p(\Omega)}^{\frac{p-2}{2}}\|f^n\| \\
& \le \frac{(p-1)\nu}{2} \big\||u^n|^{\frac{p-2}{2}}\nabla u^n \big\|^2 + C \big\|u^n \big\|_{L^p(\Omega)}^{p-2}\|f^n\|^2.
\end{align*}
The case of $p=2$ is easy. Combining these and using \eqref{eq:wkform_p-2}, we obtain for a.e. $t\in(0,T)$
\begin{align*}
\frac{1}{p}\frac{d}{dt}\|u^n(t)\|_{L^p(\Omega)}^p+ \frac{(p-1)\nu}{2} \big\||u^n|^{\frac{p-2}{2}}\nabla u^n\big\|^2 \le C\|u^n\|_{L^p(\Omega)}^{p-2}\|f^n\|^2.
\end{align*}
For $y(t)=\|u^n(t)\|_{L^p(\Omega)}^p$, the above relation can be expressed as 
\begin{align}
	y(t)\leq y(0)+C\int_0^ty(s)^{\frac{p-2}{p}}\|f^n(s)\|^2ds,
\end{align}
for all $t\in[0,T]$. Since $0\leq \frac{p-2}{p}<1$, by using a nonlinear generalization of Gronwall's inequality (Theorem \ref{lem:NonlinGronwall}), we deduce  for all $0\leq t\leq T$, 
\begin{align}\label{3p20}
	\|u^n(t)\|_{L^p(\Omega)}^p\leq \left\{\|u_0^n\|_{L^p(\Omega)}^2+C\int_0^t\|f^n(s)\|^2ds\right\}^{\frac{p}{2}}\leq C\left\{\|u_0\|_{L^p(\Omega)}^p+\left(\int_0^T\|f(t)\|^2dt\right)^{\frac{p}{2}}\right\}. 
\end{align}
Furthermore, we have 
\begin{align}\label{3p21}
\int_0^T	\big\||u^n(s)|^{\frac{p-2}{2}}\nabla u^n(s)\big\|^2ds\leq C\left\{\|u_0\|_{L^p(\Omega)}^p+\left(\int_0^T\|f(t)\|^2dt\right)^{\frac{p}{2}}\right\}. 
\end{align}
Therefore, for $2\leq p<\frac{2d}{d-2}$,  the solution exists for all $t\in [0,T],$ and $u \in C([0,T];L^p(\Omega))$. 
Taking the inner product with $Au^n,$ we find 
\begin{align*}
   & \left( \frac{du^n}{dt}, A u^n \right) + \nu \left( A u^n, A u^n  \right)+\left(B(u^n), A u^n \right)=\left(f^n, A u^n \right),   \\
  \implies &  \frac{1}{2} \frac{d}{dt}\|\nabla u^n\|^2+\nu \|Au^n\|^2 = -(B(u^n),Au^n)+ \left(f^n, A u^n \right).
\end{align*}
Note that $\left(f^n, A u^n \right) \le \|f^n\|\|Au^n\| \le \frac{\nu}{4}\|Au^n\|^2+\frac{1}{\nu}\|f^n\|^2$ and 
$|-(B(u^n),Au^n)| \le \|B(u^n)\| \|Au^n\|.$ 
Now, for $d=2,$ using Agmon's inequality, we infer 
\begin{align*}
    \|B(u^n)\| = \|\xi u^n\cdot \nabla u^n\| \le \|u^n\|_{\infty} \|\nabla u^n\| \le C \|u^n\|^{1/2}\|Au^n\|^{1/2}\|\nabla u^n\|,
\end{align*}
and therefore
\begin{align*}
    |-(B(u^n),Au^n)| \le  \frac{\nu}{4}\|Au^n\|^2 + \frac{C}{\nu} \|u^n\|^2\|\nabla u^n\|^4.
\end{align*}
Combining all these, we obtain
\begin{align*}
    \frac{d}{dt}\|\nabla u^n\|^2 +\nu \|Au^n\|^2 \le \frac{C}{\nu}\|u^n\|^2\|\nabla u^n\|^4+\frac{C}{\nu}\|f^n\|^2,
\end{align*}
and on integrating, we deduce
\begin{align*}
    \|\nabla u^n(t)\|^2 + \nu \int_0^t \|Au^n(s)\|^2 ds \le\|\nabla u_0^n\|^2+\frac{C}{\nu}\int_0^t\|f^n(s)\|^2ds+\frac{C}{\nu}\int_0^t \|u^n(s)\|^2\|\nabla u^n(s)\|^4ds.
\end{align*}
An application of Grownwall's inequality yields 
\begin{align}
	& \|\nabla u^n(t)\|^2 + \nu \int_0^t \|Au^n(s)\|^2 ds \nonumber\\&\leq \left(\|\nabla u_0^n\|^2+\frac{C}{\nu}\int_0^T\|f^n(s)\|^2ds\right)\exp\left(\frac{C}{\nu}\int_0^t \|u^n(s)\|^2\|\nabla u^n(s)\|^2ds\right),
\end{align}
and the right hand side of above inequality is finite and is independent of $n$ by using  \eqref{3p20} and \eqref{3p21}.

Now, we consider the case $d=3$. By using Theorem \ref{thm:G-NinBdd}, we find 
	\begin{align*}
		\|\nabla u\|_{L^{\frac{2p}{p-2}}(\Omega)}\leq C\|u\|_{L^p(\Omega)}^{\frac{2(p-3)}{p+6}}\|Au\|^{\frac{12-p}{p+6}},
	\end{align*}
provided $3\leq p\leq 12$. 	Therefore, we calculate
	\begin{align*}
     \|B(u^n)\| = \|\xi u^n\cdot \nabla u^n\| \le \|u^n\|_{L^p(\Omega)} \|\nabla u^n\|_{L^{\frac{2p}{p-2}}(\Omega)} \le C_a \|u^n\|_{L^p(\Omega)}^{\frac{3p}{p+6}}\|Au^n\|^{\frac{12-p}{p+6}}.
	\end{align*}
	By using the Cauchy-Schwarz and Young's inequalities with exponent $\frac{p+6}{9}$ and $\frac{p+6}{p-3}$, we estimate 
	\begin{align*}
		|-(B(u^n),Au^n)| \le \|u^n\|_{L^p(\Omega)}^{\frac{3p}{p+6}}\|Au^n\|^{\frac{18}{p+6}} \le\frac{\nu}{4}\|Au^n\|^2 + \frac{C}{\nu} \|u^n\|_{L^p(\Omega)}^{\frac{p+6}{p-3}}
		\| u^n\|_{L^p(\Omega)}^2.
	\end{align*}
Combining all these and using the Sobolev embedding, we obtain for $3<p\leq 6$
	\begin{align*}
		\frac{d}{dt}\|\nabla u^n\|^2 +\nu \|Au^n\|^2 \le \frac{C}{\nu}\|u^n\|_{L^p(\Omega)}^{\frac{p+6}{p-3}}\|\nabla u^n\|^2+ \frac{C}{\nu}\|f^n\|^2.
	\end{align*}
	By using Gronwall's inequality, we deduce 
	\begin{align}\label{3p23}
		\|\nabla u^n(t)\|^2 &+  \nu \int_0^t \|Au^n(s)\|^2 ds\nonumber \\
		& \le \left( \|\nabla u_0^n\|^2+\frac{C}{\nu} \int_0^T\|f^n(t)\|^2 d t\right) \exp\left(\frac{C}{\nu} \int_0^T\|u^n(t)\|_{L^p(\Omega)}^{\frac{p+6}{p-3}} dt\right),
\end{align}
for all $t\in [0,T]$.  The right hand side of \eqref{3p23} is finite and is independent of $n$ by using the estimate \eqref{3p20}. Thus for $u_0\in H^1_0(\Omega)$ and $f \in L^2(0,T;L^2(\Omega)),$ we have $u \in L^\infty(0,T; H^1_0(\Omega)) \cap L^2(0,T; H^2(\Omega)).$
Using the Banach-Alaoglu theorem, one can extract a subsequence (still denoted by the same symbol) of $u^n$ that $u^n \xrightarrow{w^*} \tilde{u}$ in $L^\infty(0,T;H^1_0(\Omega))$ and $u^n \xrightarrow{w} \tilde{u} $ in $L^2(0,T; D(A))$. Moreover, one can show that $\frac{du^n}{dt}$ is bounded uniformly in $n$ and $\frac{du^n}{dt}\xrightarrow{w} \frac{d\tilde{u}}{dt}$ in $L^2(0,T;L^2(\Omega))$. Therefore, an application of the Aubin-Lions Lemma gives $u^n\to \tilde{u}$ in $L^2(0,T;H_0^1(\Omega))$ and $u^n\to \tilde{u}$ in $C([0,T];L^2(\Omega))$. By using \eqref{3p17} and  the uniqueness of weak limits, one gets $u=\tilde{u}$.    With the above convergences, we can pass the limit in \eqref{3.11} and obtain that $u$ is the unique strong solution of the problem \eqref{eq:Burg}.  
\end{proof}

\subsection{Solvability of Burgers equation with non-homogenous boundary data}
In this section, we discuss the well-posedness of viscous Burgers equation with non-homogeneous boundary data. Consider the following viscous Burgers equation with non-homogeneous boundary data:
\begin{equation} \label{eqn:Burg+NHB}
	\left\{
	\begin{aligned}
	&	 \frac{\partial y}{\partial t}(x,t) - \nu \Delta y(x,t) + \sum_{i=1}^d y(x,t) \frac{\partial y}{\partial x_i}(x,t) =f(x,t) \text{ in }\Omega \times (0,T), \\
	&	 y(x,t) = g(x,t) \text{ on } \Gamma\times (0,T),\\
	&	 y(x,0) = y_0(x) \text{ in }\Omega.
	\end{aligned} \right.
\end{equation}
We start with the existence result on a heat equation with non-homogeneous boundary data:
\begin{Lemma} \label{lem:Aux-Heat+NHM}
Let $T>0$ be fixed.	Consider the heat equation 
	\begin{equation} \label{eqn:Heat+NHB}
		\left\{
		\begin{aligned}
			& \frac{\partial z}{\partial t}(x,t) - \nu \Delta z(x,t) =0 \text{ in }\Omega \times (0,T), \\
			& z(x,t) =g(x,t) \text{ on } \Gamma\times (0,T), \\
			& z(x,0) =0 \text{ in }\Omega.
		\end{aligned}\right.
	\end{equation}
	For $g\in H^{\frac{1}{2}, \frac{1}{4}}(\Gamma\times (0,T)):= L^2(0,T; H^{1/2}(\Gamma))\cap H^{1/4}(0,T; L^2(\Gamma)),$ there exists a unique weak solution of the problem \eqref{eqn:Heat+NHB}
	\[z \in L^\infty(0,T;L^2(\Omega))\cap L^2(0,T; H^1(\Omega))\ \mbox{ such that }\ \partial_tz \in L^2(0,T; H^{-1}(\Omega)).\] And, if $g\in H^{\frac{3}{2}, \frac{3}{4}}(\Gamma\times (0,T)):=L^2(0,T; H^{3/2}(\Gamma))\cap H^{3/4}(0,T; L^2(\Gamma)),$ then \eqref{eqn:Heat+NHB} admits a unique strong solution \[z\in L^\infty(0,T; H^1(\Omega))\cap L^2(0,T; H^2(\Omega))\ \text{ such that }\ \partial_tz \in L^2(0,T; L^2(\Omega)).\]

	Furthermore, the following estimates hold:
	\begin{align} \label{eq:energyestNHH}
		&\sup_{[0,T]}\|z(t)\|^2 + \nu \int_0^T\|z(s)\|^2_{H^1(\Omega)} \, ds \le C\|g\|_{H^{\frac{1}{2}, \frac{1}{4}}(\Gamma\times (0,T))}^2,\\
		&\sup_{t\in[0,T]}\|z(t)\|_{H^1(\Omega)}^2+\nu\int_0^T\|z(t)\|_{H^2(\Omega)}^2dt\leq C\|g\|_{H^{\frac{3}{2}, \frac{3}{4}}(\Gamma\times (0,T))}^2,  \label{eq:energyestNHH1}
	\end{align}
	for some $C= C(\nu,Tr)>0.$
\end{Lemma}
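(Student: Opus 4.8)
Write $Q:=\Omega\times(0,T)$ and $\Sigma:=\Gamma\times(0,T)$. The plan is to reduce \eqref{eqn:Heat+NHB} to a heat equation with \emph{homogeneous} Dirichlet data by subtracting a suitable lifting of $g$, and then to invoke the standard linear parabolic theory for the resulting problem with a source term: in the strong case this can be done with the analytic semigroup $\{e^{-\nu tA}\}$ already recalled above together with its maximal $L^2$-regularity, while in the weak case it rests on the Lions--Magenes theory of anisotropic Sobolev spaces \cite{LioMag-V2}. Recall that $H^{\frac12,\frac14}(\Sigma)$ is exactly the lateral trace space of $H^{1,\frac12}(Q)=L^2(0,T;H^1(\Omega))\cap H^{\frac12}(0,T;L^2(\Omega))$, and $H^{\frac32,\frac34}(\Sigma)$ is the lateral trace space of $H^{2,1}(Q)=L^2(0,T;H^2(\Omega))\cap H^1(0,T;L^2(\Omega))$; this supplies, in each case, a bounded linear lifting $w$ with $w|_\Sigma=g$, with norm controlled by $\|g\|$ in the corresponding boundary space, and (after extending $g$ by zero to $t<0$) with $w(0)=0$.

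I would treat the strong case first, since it is essentially self-contained. Given $g\in H^{\frac32,\frac34}(\Sigma)$, take the lifting $w\in L^2(0,T;H^2(\Omega))\cap H^1(0,T;L^2(\Omega))$; then $\zeta:=z-w$ must solve $\partial_t\zeta+\nu A\zeta=h$ on $(0,T)$ with $\zeta(0)=0$ and $\zeta|_\Gamma=0$, where $h:=\nu\Delta w-\partial_t w\in L^2(0,T;L^2(\Omega))$ and $\|h\|_{L^2(0,T;L^2(\Omega))}\le C\|g\|_{H^{\frac32,\frac34}(\Sigma)}$. By $L^2$-maximal regularity for the analytic semigroup generated by $-\nu A$ there is a unique $\zeta\in L^2(0,T;D(A))$ with $\partial_t\zeta\in L^2(0,T;L^2(\Omega))$, hence $\zeta\in C([0,T];H^1_0(\Omega))$ since $[L^2(\Omega),D(A)]_{1/2}=H^1_0(\Omega)$. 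Testing $\partial_t\zeta+\nu A\zeta=h$ with $A\zeta$, using $(\partial_t\zeta,A\zeta)=\tfrac12\tfrac{d}{dt}\|\nabla\zeta\|^2$ and Young's inequality, and integrating in time gives $\sup_{[0,T]}\|\nabla\zeta(t)\|^2+\nu\int_0^T\|A\zeta(t)\|^2\,dt\le \tfrac1\nu\int_0^T\|h(t)\|^2\,dt\le C\|g\|_{H^{\frac32,\frac34}(\Sigma)}^2$; adding back the bound for $w$ yields \eqref{eq:energyestNHH1} for $z=w+\zeta$. Uniqueness is immediate: the difference of two strong solutions solves the homogeneous heat equation with zero data, and the usual energy identity forces it to vanish.

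For the weak case $g\in H^{\frac12,\frac14}(\Sigma)$ the same reduction applies only formally with the lifting $w\in H^{1,\frac12}(Q)$, because $h=\nu\Delta w-\partial_t w$ then lies only in $L^2(0,T;H^{-1}(\Omega))+H^{-\frac12}(0,T;L^2(\Omega))$ and the pairings used above are no longer licit. I would instead obtain $z$ by transposition in the sense of Lions--Magenes: declare $z\in L^2(Q)$ a weak solution if, for every $\psi\in L^2(Q)$ and the associated solution $\phi$ of the backward adjoint problem $-\partial_t\phi-\nu\Delta\phi=\psi$ in $Q$, $\phi|_\Sigma=0$, $\phi(T)=0$, one has $\int_0^T(z(t),\psi(t))\,dt=-\nu\int_0^T\langle g(t),\partial_n\phi(t)\rangle_\Gamma\,dt$. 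The solvability of this identity together with the upgrade $z\in L^\infty(0,T;L^2(\Omega))\cap L^2(0,T;H^1(\Omega))$ with $\partial_t z\in L^2(0,T;H^{-1}(\Omega))$, and the estimate \eqref{eq:energyestNHH}, are precisely the content of the Lions--Magenes isomorphism theorem for the heat operator between the anisotropic Sobolev scales \cite{LioMag-V2,McLean-2000}; alternatively one may prove \eqref{eq:energyestNHH} for smooth $g$ and pass to the limit by density using those same estimates. Once $\partial_t z\in L^2(0,T;H^{-1}(\Omega))$ is known, $z\in C([0,T];L^2(\Omega))$ by the standard interpolation, so the initial condition $z(0)=0$ is meaningful; uniqueness again follows from the homogeneous energy identity.

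The main obstacle is exactly this weak case: $g$ carries only $H^{1/4}$ regularity in time, so it has no trace at $t=0$ and admits no lifting with $\partial_t w\in L^2$, which is why the energy estimate cannot be closed by naive testing and one must route through the transposition formalism (equivalently, a careful bookkeeping in the scale $H^{r,r/2}(Q)$). A minor related subtlety, worth flagging, is that the extension of $g$ by zero across $t=0$ is legitimate for free only in the weak case ($1/4<1/2$); in the strong case it presupposes the compatibility condition $g(\cdot,0)=0$ in $L^2(\Gamma)$. Everything else — the maximal-regularity step, the testing-and-integrating energy estimates, and uniqueness — is routine and parallels the homogeneous case treated in Theorem \ref{thm:GblSolHomBurg}.
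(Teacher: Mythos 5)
Your proof is correct in substance, but it routes through different machinery than the paper in several places, so a comparison is worthwhile. For the strong case the paper simply quotes Lions--Magenes (15.39) to obtain $z\in H^{2,1}(\Omega\times(0,T))$ with the accompanying estimate, and then upgrades to $z\in L^\infty(0,T;H^1(\Omega))$ via the embedding $L^2(0,T;H^2(\Omega))\cap H^1(0,T;L^2(\Omega))\hookrightarrow C([0,T];H^1(\Omega))$; your construction by lifting $g$ to $w\in H^{2,1}$, solving for $\zeta=z-w$ by maximal $L^2$-regularity, and testing with $A\zeta$ is more self-contained and has the advantage of producing \eqref{eq:energyestNHH1} with an explicit $\nu$-dependence rather than an unspecified constant from the quoted theorem. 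For the weak case both arguments ultimately defer to Lions--Magenes for the existence content; the difference is in how \eqref{eq:energyestNHH} is extracted. The paper uses the absolute-continuity lemma to write $\frac{d}{dt}\|z\|^2=2\langle\partial_t z,z\rangle$ and then integrates by parts to produce the boundary pairing $\langle\nabla z\cdot\mathbf{n},g\rangle_{H^{-1/2}(\Gamma),H^{1/2}(\Gamma)}$, which it bounds via $\|\nabla z\|_{H^{-1/2}(\Gamma)}\le\|z\|_{H^{1/2}(\Gamma)}$ --- a step that is delicate, since the normal derivative of a function that is merely in $L^2(0,T;H^1(\Omega))$ does not obviously define an element of $H^{-1/2}(\Gamma)$ with that bound; your transposition-plus-density route sidesteps this entirely, at the cost of leaning more heavily on the Lions--Magenes isomorphism. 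Finally, your observation that the strong case implicitly requires the compatibility condition $g(\cdot,0)=0$ (so that the lifting, or equivalently the quoted theorem, applies with zero initial data) is accurate and is a condition the lemma statement and the paper's proof both pass over in silence.
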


\begin{proof}
	The existence of a weak solution follows from \cite[Section 15.5]{LioMag-V2}. Indeed, for a given $g\in H^{\frac{1}{2}, \frac{1}{4}}(\Gamma\times (0,T)),$ we take $m=1$ and $j=0$ in \cite[(15.33) - (15.38)]{LioMag-V2} to obtain $z\in L^2(0,T; H^1(\Omega))$ with $\partial_tz\in L^2(0,T; H^{-1}(\Omega))$ satisfying 
	\begin{align*}
		\|z\|_{L^2(0,T;H^1(\Omega))}+\|\partial_tz\|_{L^2(0,T;H^{-1}(\Omega))} \le C \|g\|_{ H^{\frac{1}{2}, \frac{1}{4}}(\Gamma\times (0,T))},
	\end{align*}
	for some $C>0.$ Now, to show $z\in L^\infty(0,T; L^2(\Omega))$ and to obtain the energy estimate \eqref{eq:energyestNHH}, we use \cite[Lemma 1.2, Page 176]{TemamNSE01}. Note that, we have $H^1_0(\Omega) \subset H^1(\Omega) \subset L^2(\Omega)\subset H^{-1}(\Omega)$ and $z \in L^2(0,T; H^1(\Omega))$ with $\frac{\partial z}{\partial t} \in L^2(0,T; H^{-1}(\Omega)).$ Therefore, from \cite[Lemma 1.2, Page 176]{TemamNSE01}, $z$ is a.e. equal to a function in $C([0,T];L^2(\Omega))$ and 
	\begin{align*}
		\frac{d}{dt}\|z(t)\|^2 = 2 \left\langle \frac{\partial z (t)}{\partial t} , z(t)\right\rangle 
	\end{align*}
	for a.e. $t\in[0,T]$. From \eqref{eqn:Heat+NHB} and using integration by parts, we obtain 
	\begin{align}\label{abscty1}
		\frac{d}{dt}\|z(t)\|^2 = -2\nu\|\nabla z(t)\|^2+\nu\langle\nabla z(t)\cdot\textbf{n},g(t)\rangle_{H^{-1/2}(\Gamma), H^{1/2}(\Gamma)},
	\end{align}
	for a.e. $t\in[0,T]$. By using the definition of trace operator, Cauchy-Schwarz and Young's inequalities, we calculate
	\begin{align}\label{abscty2}
	|\langle\nabla z\cdot\textbf{n},g\rangle_{H^{-1/2}(\Gamma), H^{1/2}(\Gamma)}|&\le \|\nabla z\|_{H^{-1/2}(\Gamma)}\|g\|_{H^{1/2}(\Gamma)} \nonumber\\& \le \|z\|_{H^{1/2}(\Gamma)} \|g\|_{H^{1/2}(\Gamma)} \nonumber\\
	&  \le C_{Tr}\|z\|_{H^1(\Omega)} \|g\|_{H^{1/2}(\Gamma)}\nonumber\\& \le \|z\|_{H^1(\Omega)}^2+\frac{C_{Tr}}{4}\|g\|_{H^{\frac{1}{2}}(\Gamma)}^2.
	\end{align}
Combining \eqref{abscty1}-\eqref{abscty2}, we obtain 
\begin{align*}
		\frac{d}{dt}\|z(t)\|^2+\nu\|\nabla z(t)\|^2 \leq \nu\|z(t)\|^2+\nu\frac{C_{Tr}}{4}\|g(t)\|_{H^{1/2}(\Gamma)}^2,
\end{align*}
for a.e. $t\in[0,T]$. Finally, by using Gronwalls' inequality, we obtain the required estimate. 
	
%
	Now, for $g\in H^{\frac{3}{2}, \frac{3}{4}}(\Gamma\times (0,T)),$ we use \cite[(15.39)]{LioMag-V2} with $m=1, j=0$ to show the existence of solution $z\in H^{2,1}(\Omega\times (0,T))=L^2(0,T; H^2(\Omega))\cap H^1(0,T; L^2(\Omega))$ satisfying 
	\begin{align*}
		\|z\|_{L^2(0,T; H^2(\Omega))}+\Big\|\frac{\partial z}{\partial t} \Big\|_{L^2(0,T; L^2(\Omega))} \le C \|g\|_{H^{\frac{3}{2}, \frac{3}{4}}(\Gamma\times (0,T))},
	\end{align*}
	for some $C>0.$ Since, $z\in L^2(0,T; H^2(\Omega))$ with $\partial_tz \in L^2(0,T; L^2(\Omega)),$ \cite[Theorem 4, Chapter 5, Section 5.9]{Eva} yields $z\in L^\infty (0,T; H^1(\Omega))$ and 
	\begin{align*}
		\|z\|_{L^\infty(0,T; H^1(\Omega))} \le C\left( \|z\|_{L^2(0,T; H^2(\Omega))} + \Big\|\frac{\partial z}{\partial t} \Big\|_{L^2(0,T; L^2(\Omega))} \right) \le C (\nu, T, Tr, \|g\|_{H^{\frac{3}{2}, \frac{3}{4}}(\Gamma\times (0,T))}),
	\end{align*}
	which completes the proof.
\end{proof}

In the next theorem, we discuss the well-posedness of a Burgers equation with non-homogeneous boundary data. 

\begin{Theorem} \label{th:NonHomNStBurgWP}
     For fixed $T>0,$ $y_0 \in H^1_0(\Omega)$ and $g\in H^{\frac{3}{2},\frac{3}{4}}(\Gamma\times (0,T)):= L^2(0,T; H^{3/2}(\Gamma))\cap H^{3/4}(0,T; L^2(\Gamma)),$ there exists a unique strong solution (of \eqref{eqn:Burg+NHB}) $$y \in L^\infty(0,T;H^1(\Omega))\cap L^2(0,T; H^2(\Omega)) \text{ with } \frac{\partial y}{\partial t}  \in L^2(0,T;L^2(\Omega)).$$ 
	
\end{Theorem}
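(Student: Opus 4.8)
The plan is to reduce the non-homogeneous problem \eqref{eqn:Burg+NHB} to the homogeneous problem \eqref{eq:Burg} already solved in Theorem \ref{thm:GblSolHomBurg} via a lifting (or ``subtraction of the boundary data'') argument, exactly as was done in the stationary case in Theorem \ref{thm-non-hom}. First I would invoke Lemma \ref{lem:Aux-Heat+NHM}: since $g\in H^{\frac32,\frac34}(\Gamma\times(0,T))$, there is a unique strong solution $z$ of the auxiliary heat equation \eqref{eqn:Heat+NHB} with $z\in L^\infty(0,T;H^1(\Omega))\cap L^2(0,T;H^2(\Omega))$ and $\partial_t z\in L^2(0,T;L^2(\Omega))$, satisfying the energy estimate \eqref{eq:energyestNHH1}. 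Setting $v:=y-z$, the function $v$ should solve a Burgers-type equation with homogeneous boundary data, zero... wait, with initial data $v(0)=y_0-z(0)=y_0\in H^1_0(\Omega)$ (since $z(0)=0$), and with a modified forcing term
\[
\tilde f := f + \nu\Delta z - \partial_t z - \sum_{i=1}^d\Big(v\frac{\partial z}{\partial x_i} + z\frac{\partial v}{\partial x_i} + z\frac{\partial z}{\partial x_i}\Big),
\]
where the last group of terms comes from expanding $B(v+z)$. Because $\partial_t z-\nu\Delta z=0$ in $\Omega$ by construction, the terms $\nu\Delta z-\partial_t z$ in fact cancel, so only $f$ together with the quadratic cross-terms in $z$ survive on the right-hand side.

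Next I would treat this as a perturbed version of \eqref{eq:Burg}. The linear-in-$v$ transport terms $\sum_i z\,\partial_{x_i}v$ and $\sum_i v\,\partial_{x_i}z$ cannot simply be absorbed into the forcing, so rather than directly quoting Theorem \ref{thm:GblSolHomBurg} I would rerun its energy-estimate scheme for $v$: regularize $y_0$ and $g$ (hence $z$) by smooth approximations, solve the approximate problems by the same mild-solution/parabolic-regularity argument as in Theorems \ref{mild-solution}--\ref{thm:GblSolHomBurg}, and then derive uniform bounds by testing the $v$-equation against $v$ and against $Av$. Testing against $v$: the genuinely nonlinear term $\sum_i(v+z)\partial_{x_i}(v+z)$ paired with $v$ is handled by integration by parts and Ladyzhenskaya/Gagliardo--Nirenberg inequalities (Theorem \ref{thm:G-NinBdd}) exactly as in the homogeneous proof, while the terms involving $z$ are controlled using $\|z\|_{L^\infty(0,T;H^1)}$ and $\|z\|_{L^2(0,T;H^2)}$ from \eqref{eq:energyestNHH1}, together with Hölder, Sobolev embedding (Lemma \ref{lemPR:SobEmb}) and Young's inequality to absorb $\nu\|\nabla v\|^2$ or $\nu\|Av\|^2$ on the left. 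Testing against $Av$: here for $d=2$ one uses Agmon's inequality (Lemma \ref{lemVB:AgmonIE}) on $\|v\|_{L^\infty}$ and on $\|z\|_{L^\infty}$, and for $d=3$ the Gagliardo--Nirenberg estimate $\|\nabla v\|_{L^{2p/(p-2)}}\le C\|v\|_{L^p}^\theta\|Av\|^{1-\theta}$ as in \eqref{3p23}, combined with the already-established $L^\infty(0,T;L^p(\Omega))$ bound on $v$ (which follows from the $\|u^n\|_{L^p}$ estimate \eqref{3p20} applied to the perturbed equation, noting $B(v+z)$ still has the sign-definiteness property when tested against $|v|^{p-2}v$ up to $z$-terms that are lower order). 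The outcome is a differential inequality of the form $\frac{d}{dt}\|\nabla v\|^2+\nu\|Av\|^2\le \varphi(t)\|\nabla v\|^2+\psi(t)$ with $\varphi,\psi\in L^1(0,T)$ depending only on $\nu$, $T$, $\|y_0\|_{H^1_0}$, and $\|g\|_{H^{3/2,3/4}}$, to which Gronwall's inequality (Lemma \ref{lem:Gronwall}) applies, yielding $v\in L^\infty(0,T;H^1_0(\Omega))\cap L^2(0,T;D(A))$ with $\partial_t v\in L^2(0,T;L^2(\Omega))$.

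Having uniform bounds, I would pass to the limit in the regularized problems using Banach--Alaoglu together with the Aubin--Lions lemma (strong convergence in $L^2(0,T;H^1_0(\Omega))$ and in $C([0,T];L^2(\Omega))$), which suffices to pass to the limit in the quadratic terms; uniqueness of weak limits identifies the limit with $v$. Then $y:=v+z$ is the desired strong solution of \eqref{eqn:Burg+NHB}, lying in $L^\infty(0,T;H^1(\Omega))\cap L^2(0,T;H^2(\Omega))$ with $\partial_t y\in L^2(0,T;L^2(\Omega))$, since $z$ has precisely this regularity by Lemma \ref{lem:Aux-Heat+NHM}. Uniqueness of $y$ follows from uniqueness of $z$ together with a standard energy argument for the difference of two solutions $v_1-v_2$: subtract the equations, test against $v_1-v_2$, use that the bad term $\sum_i(v_1-v_2)\partial_{x_i}(v_1-v_2)$ paired with $v_1-v_2$ vanishes and the remaining bilinear terms are controlled by the a priori bounds, and conclude by Gronwall. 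The main obstacle is the careful bookkeeping of the $z$-dependent transport and quadratic terms in the $Av$-estimate in three dimensions: one must verify that every such term can be split so that the top-order factor $\|Av\|$ (or $\|Az\|=\|z\|_{H^2}$) appears at most to a power allowing absorption via Young's inequality, using the Gagliardo--Nirenberg exponents valid for the admissible range of $p$ with $d<p<2d/(d-2)=6$; this is where the choice of $p$ and the interpolation constants from Theorem \ref{thm:G-NinBdd} become delicate, but no new idea beyond those already used in the proof of Theorem \ref{thm:GblSolHomBurg} is required.
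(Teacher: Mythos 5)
Your proposal follows essentially the same route as the paper: lift the boundary data via the auxiliary non-homogeneous heat equation of Lemma \ref{lem:Aux-Heat+NHM}, subtract to obtain the homogeneous-boundary perturbed Burgers system \eqref{eq:homoBurgLower}, verify $\sum_{i=1}^d z\,\partial_{x_i}z \in L^2(0,T;L^2(\Omega))$, and rerun the energy scheme of Theorem \ref{thm:GblSolHomBurg}. If anything, you are more explicit than the paper about the fact that the linear transport terms $\sum_i z\,\partial_{x_i}v$ and $\sum_i v\,\partial_{x_i}z$ are not mere forcing terms and must be carried through the $v$- and $Av$-estimates, a point the paper's proof leaves implicit.
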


\begin{proof}
	Let us start by considering the following non-homogeneous heat equation
	\begin{equation}
		\left\{
		\begin{aligned}
			& \frac{\partial z}{\partial t}(x,t) - \nu \Delta z(x,t)=0 \text{ in } \Omega \times (0,T), \\
			& z(x,t)=g(x,t) \text{ on } \Gamma \times (0,T), \\
			& z(x,0)=0 \text{ in } \Omega. 
		\end{aligned} \right.
	\end{equation}
	The well-posedness of this non-homogeneous heat equation is discussed in Lemma \ref{lem:Aux-Heat+NHM}. \\
	Let us take a change of variable $w:=y-z$ and then $w$ satisfies
	
	\begin{equation} \label{eq:homoBurgLower}
		\left\{
		\begin{aligned}
			& \frac{\partial w}{\partial t} - \nu \Delta w  + \sum_{i=1}^d w \frac{\partial w}{\partial x_i} +  \sum_{i=1}^d  \frac{\partial z}{\partial x_i} w+ \sum_{i=1}^d z \frac{\partial w}{\partial x_i} =f - \sum_{i=1}^d z \frac{\partial z}{\partial x_i} \text{ in } \Omega \times (0,T), \\
			& w(x,t)=0 \text{ on } \Gamma \times (0,T), \\
			& w(x,0)=y_0(x) \text{ in } \Omega. 
		\end{aligned}\right.
	\end{equation}
	Now, the existence of a unique strong solution of \eqref{eq:homoBurgLower} can be established by following the proof of Theorem \ref{thm:GblSolHomBurg} along with the fact that $\displaystyle f, \sum_{i=1}^d z \frac{\partial z}{\partial x_i} \in L^2(0,T;L^2(\Omega)).$ Indeed,
	\begin{align*}
		\left\| \sum_{i=1}^d z \frac{\partial z}{\partial x_i}\right\|_{L^2(0,T; L^2(\Omega))}^2 & = \int_0^T \left\| \sum_{i=1}^d z \frac{\partial z}{\partial x_i}\right\|^2 dt \\
		& \le \int_0^T \|z(t)\|_{L^\infty(\Omega)}^2 \|\nabla z(t)\|^2 dt \\
		&\le \sup_{[0,T]} \|\nabla z(t)\|^2 \int_0^T\| z(t)\|^2_{L^\infty(\Omega)} dt.
	\end{align*}
	Now, using Agmons' inequality (\cite[Lemma 13.2]{Agmon10}) and Lemma \ref{lem:Aux-Heat+NHM}, we have 
	\begin{align*}
		\left\| \sum_{i=1}^d z \frac{\partial z}{\partial x_i}\right\|_{L^2(0,T; L^2(\Omega))}^2 & \le C
		\begin{cases}
			\displaystyle\sup_{[0,T]} \|\nabla z(t)\|^2 \int_0^T \|z(t)\|\|z(t)\|_{H^2(\Omega)} dt, & \text{ for } d=2, \\
			\displaystyle\sup_{[0,T]} \|\nabla z(t)\|^2 \int_0^T \|z(t)\|^{1/2}\|z(t)\|_{H^2(\Omega)}^{3/2} dt ,& \text{ for } d=3, \\
		\end{cases} \\
		& \le C\|g\|_{H^{\frac{3}{2},\frac{3}{4}}(\Gamma\times (0,T))}.
	\end{align*}
As $y_0\in H^1_0(\Omega),$ by using a similar argument as used to establish Theorem \ref{thm:GblSolHomBurg}, we obtain a unique strong solution $w\in L^\infty(0,T; H^1_0(\Omega)) \cap L^2(0,T; H^2(\Omega))$ of \eqref{eq:homoBurgLower}.  Hence, \eqref{eqn:Burg+NHB} admits a unique strong solution $y=w+z \in  L^\infty(0,T; H^1(\Omega)) \cap L^2(0,T; H^2(\Omega)).$
\end{proof}

\section{Error estimates for non-stationary viscous Burgers equation}   \label{sec:errestNStVB}
This section is devoted to discuss the error analysis for non-stationary viscous Burgers equation using deep learning method. We first show that if there exists a DNN approximate solution $u_N \in \mathcal{F}_N$ corresponding to which the loss is small, then the  the error $\|u-u_N\|$ is also small. Conversely, we also show that for any given small $\varepsilon>0,$ there exists a DNN $u_N \in \mathcal{F}_N$ for which the loss is less than $\varepsilon.$ We also discuss the stability analysis of the proposed DNN scheme.

\begin{Theorem} \label{thm:err-aux-1}
Let $u_0\in H^1_0(\Omega)$ and $ f\in L^2(0,T; L^2(\Omega)).$ Let $u$ be the strong solution of the problem \eqref{eq:Burg}, and for a given $\varepsilon>0,$ there exists $u_N\in \mathcal{F}_N$ be such that 
\begin{align}
  \bigg\|\frac{\partial u_N}{\partial t} -\nu \Delta u_N+\sum_{i=1}^d u_N \frac{\partial u_N}{\partial x_i} -f \bigg\|_{L^2(0,T;L^2(\Omega))}^2 &+ \|u_N\vert_{\Gamma}\|_{ H^{\frac12,\frac14}(\Gamma\times(0,T))}^2\nonumber\\&+ \|u_0(\cdot) - u_N(\cdot,0)\|^2 \le \varepsilon^2.
\end{align}
 Then there exists $C>0$ such that
\begin{align*}
    \sup_{t\in [0,T]} \|u(t) - u_N(t)\|^2+\nu\int_0^T\|\nabla(u(s) - u_N(s)) \|^2ds \le C\varepsilon^2.
\end{align*}
\end{Theorem}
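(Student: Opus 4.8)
The plan is to run the standard energy argument for the error equation, exactly mirroring the structure of the stationary proof in Theorem~\ref{th;error-statVB} but now with time dependence. First I would set $-\wt{f}:=\frac{\partial u_N}{\partial t}-\nu\Delta u_N+\sum_{i=1}^d u_N\frac{\partial u_N}{\partial x_i}-f$ and $-\wt{g}:=u_N|_\Gamma$, so that $u_N$ solves a non-homogeneous Burgers equation with data $(f-\wt{f},\wt{g},u_N(\cdot,0))$. Writing $w:=u-u_N$, subtracting the equation for $u_N$ from \eqref{eq:Burg}, and using the algebraic identity $\sum_i(u\partial_i u-u_N\partial_i u_N)=\sum_i(w\partial_i u+u\partial_i w-w\partial_i w)$, we obtain a parabolic problem for $w$ with right-hand side $\wt f$, boundary value $\wt g$, and initial value $u_0-u_N(\cdot,0)$, whose norm in $L^2(\Omega)$ is controlled by $\varepsilon$.

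Next I would remove the boundary inhomogeneity via the lifting/heat-extension $z$ from Lemma~\ref{lem:Aux-Heat+NHM}: since $\wt g\in H^{\frac12,\frac14}(\Gamma\times(0,T))$ with norm $\le\varepsilon$, the corresponding weak solution $z$ of the non-homogeneous heat equation satisfies $z\in L^\infty(0,T;L^2(\Omega))\cap L^2(0,T;H^1(\Omega))$ with
\[
\sup_{[0,T]}\|z(t)\|^2+\nu\int_0^T\|z(s)\|_{H^1(\Omega)}^2\,ds\le C\|\wt g\|_{H^{\frac12,\frac14}(\Gamma\times(0,T))}^2\le C\varepsilon^2 .
\]
Then $v:=w-z$ has zero boundary trace and initial value $v(\cdot,0)=u_0-u_N(\cdot,0)-z(\cdot,0)=u_0-u_N(\cdot,0)$ (as $z(\cdot,0)=0$), so $\|v(\cdot,0)\|\le\varepsilon$. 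I would take the $L^2(\Omega)$ inner product of the equation for $v$ with $v$, integrate by parts in the viscous term, and keep track of the many cross terms coming from $B(u)$, $B(z)$ and the mixed products $\int z\,\partial_i u\,v$, $\int u\,\partial_i z\,v$, $\int v\,\partial_i u\,v$, $\int z\,\partial_i v\,v$, $\int v\,\partial_i z\,v$, together with the forcing term $(\wt f,v)$ and the ``lifting'' term $\nu(\nabla z,\nabla v)$ (or a $\langle\partial_t z,v\rangle$ term, which can be handled since $\partial_t z\in L^2(0,T;H^{-1})$).

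Each term I would bound using Hölder, Ladyzhenskaya/Gagliardo--Nirenberg (Theorem~\ref{thm:G-NinBdd}, valid in $d=2,3$) and Young's inequalities, absorbing all $\|\nabla v\|^2$ contributions into the left-hand $\nu\|\nabla v\|^2$ using a small parameter, exactly as in the six $I_k$ estimates of Theorem~\ref{th;error-statVB}; the terms linear in $v$ but quadratic or higher in $z$ contribute $C\varepsilon^2$ (using the energy bound on $z$, $\sup_t\|\nabla u(t)\|$ and $\int_0^T\|Au(t)\|^2$ from Theorem~\ref{thm:GblSolHomBurg}, and Agmon's inequality for $\|z\|_{L^\infty}$-type factors when needed), and the term $(\wt f,v)$ contributes $\frac{\nu}{8}\|\nabla v\|^2+C\|\wt f\|_{H^{-1}}^2\le\frac{\nu}{8}\|\nabla v\|^2+C\varepsilon^2$. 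After absorption I arrive at a differential inequality of the form
\[
\frac{d}{dt}\|v(t)\|^2+\nu\|\nabla v(t)\|^2\le a(t)\|v(t)\|^2+C\varepsilon^2\bigl(1+b(t)\bigr),
\]
where $a,b\in L^1(0,T)$ depend on $\|\nabla u(t)\|$, $\|z(t)\|_{H^1}$ and the viscosity; integrating in time with $\|v(0)\|^2\le\varepsilon^2$ and applying the Gronwall inequality (Lemma~\ref{lem:Gronwall}) yields $\sup_{[0,T]}\|v(t)\|^2+\nu\int_0^T\|\nabla v(s)\|^2ds\le C\varepsilon^2$. Finally, since $u-u_N=w=v+z$ and $z$ obeys the $C\varepsilon^2$ energy bound, the triangle inequality gives $\sup_{t}\|u(t)-u_N(t)\|^2+\nu\int_0^T\|\nabla(u(s)-u_N(s))\|^2ds\le C\varepsilon^2$, which is the claim.

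The main obstacle I anticipate is controlling the nonlinear cross terms that involve derivatives of the (only $H^1$-in-space, $L^2$-in-time-$H^1$) lifting $z$ against $v$ — in particular terms like $\int_\Omega z\,\partial_i u\,v$ and $\int_\Omega u\,\partial_i z\,v$ in three dimensions, where one cannot afford two $\|\nabla v\|$ factors and must instead spend the regularity of the exact solution $u\in L^2(0,T;H^2)\cap L^\infty(0,T;H^1)$ and be careful about which factor carries the $L^\infty$ or $L^4$ norm; getting these splittings consistent in both $d=2$ and $d=3$, and ensuring all resulting time-integrals are finite via the a priori bounds \eqref{eqn-ener} and the heat-extension estimate \eqref{eq:energyestNHH}, is where the real work lies. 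A secondary technical point is the rigorous justification of the energy identity $\frac{d}{dt}\|v\|^2=2\langle\partial_t v,v\rangle$, which follows from the Lions--Magenes/Temam lemma once $v\in L^2(0,T;H^1_0)$ with $\partial_t v\in L^2(0,T;H^{-1})$ is established.
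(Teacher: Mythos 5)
Your proposal follows essentially the same route as the paper: the same residual/boundary-defect definitions of $\wt f$ and $\wt g$, the same decomposition $u-u_N=w=v+z$ with $z$ the heat extension of $\wt g$ from Lemma~\ref{lem:Aux-Heat+NHM} (whose key advantage, which you correctly exploit, is that $\partial_t z-\nu\Delta z=0$ so no extra lifting terms survive in the equation for $v$), the same H\"older/Ladyzhenskaya/Young estimates with absorption into $\nu\|\nabla v\|^2$, Gronwall in time, and the triangle inequality at the end. The technical concerns you flag (the cross terms involving $\nabla z$ and the justification of the energy identity) are exactly the points the paper handles via the a priori bounds \eqref{eqn-ener} and \eqref{eq:energyestNHH}, so the proposal is correct and matches the paper's argument.
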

\begin{proof}
Throughout the proof $C$ is a positive generic constant that may depend on $\nu, $ constants appearing in Poincar\'e inequality, Sobolev embedding, Agmon's and Ladyzhenskaya inequalities.  Let $\displaystyle -\wt{f}:=\frac{\partial u_N}{\partial t} -\nu \Delta u_N+\sum_{i=1}^d u_N \frac{\partial u_N}{\partial x_i} -f$ and $-\wt{g}:=u_N\vert_\Gamma.$ Then, we have 
\begin{equation}\label{eq:tilde_u_N-pde-lift}
	\left\{
\begin{aligned}
    &\frac{\partial u_N}{\partial t} -\nu \Delta u_N+\sum_{i=1}^d u_N \frac{\partial u_N}{\partial x_i} =f -\wt{f} \text{ in }\Omega\times (0,T), \\
    & u_N=-\wt{g} \text{ on }\Gamma\times (0,T).
\end{aligned}\right.
\end{equation}
Set $w=u-u_N,$ and note that 
$$ \sum_{i=1}^d u \frac{\partial u}{\partial x_i} - \sum_{i=1}^d u_N \frac{\partial u_N}{\partial x_i}= \sum_{i=1}^d w \frac{\partial u}{\partial x_i} + \sum_{i=1}^d u \frac{\partial w}{\partial x_i} - \sum_{i=1}^d w \frac{\partial w}{\partial x_i},$$
then the system satisfied by $w$ is
\begin{equation} \label{eqn:w=u-u_N}
	\left\{
\begin{aligned}
    &\frac{\partial w}{\partial t} -\nu \Delta w + \sum_{i=1}^d w \frac{\partial u}{\partial x_i} + \sum_{i=1}^d u \frac{\partial w}{\partial x_i} - \sum_{i=1}^d w \frac{\partial w}{\partial x_i} =\wt{f} \text{ in }\Omega\times (0,T), \\
    & w=\wt{g} \text{ on }\Gamma\times (0,T),\\
    & w(x,0)=u_0(x)- u_N(x,0) \text{ for all }x\in \Omega.
\end{aligned}\right.
\end{equation}
Now, for a given $\wt{g} \in  H^{\frac32,\frac34}(\Gamma)),$ we consider the following auxiliary heat equation with non-homogenous boudary data:
\begin{equation} \label{eqn:Aux-heatNHB}
	\left\{ 
	\begin{aligned}
    & \frac{\partial z}{\partial t}(x,t) - \nu \Delta z(x,t) =0 \text{ in }\Omega \times (0,T), \\
    & z(x,t)= \wt{g}(x,t) \text{ on }\Gamma\times (0,T),\\
    & z(x,0)=0 \text{ in }\Omega.
    \end{aligned} \right.
\end{equation}
The existence of a unique strong solution of the problem \eqref{eqn:Aux-heatNHB} satisfying the estimate \eqref{eq:energyestNHH1} is discussed in Lemma \ref{lem:Aux-Heat+NHM}.  

  Subtract \eqref{eqn:Aux-heatNHB} from \eqref{eqn:w=u-u_N} and set $v:= w-z$ to obtain
\begin{equation}
	\left\{ 
    \begin{aligned}
       & \frac{\partial v}{\partial t}  -\nu \Delta v +\sum_{i=1}^d (v+z) \frac{\partial u}{\partial x_i} + \sum_{i=1}^d u \frac{\partial (v+z)}{\partial x_i} - \sum_{i=1}^d (v+z) \frac{\partial (v+z)}{\partial x_i} =\wt{f} \text{ in }\Omega\times (0,T), \\
    & v=0 \text{ on }\Gamma\times (0,T),\\
    & v(x,0)=u_0(x) - u_N(x,0) \text{ for all }x\in \Omega.
    \end{aligned} \right.
\end{equation}
On simplification, one gets
\begin{equation}\label{model1}
		\left\{ 
    \begin{aligned}
        & \frac{\partial v}{\partial t} -\nu \Delta v  +\sum_{i=1}^d v \frac{\partial u}{\partial {x_i}} +\sum_{i=1}^d z \frac{\partial u}{\partial {x_i}} 
        + \sum_{i=1}^d u \frac{\partial v}{\partial {x_i}}   +\sum_{i=1}^d u \frac{\partial z}{\partial {x_i}} \\
        & \qquad - \sum_{i=1}^d v \frac{\partial v}{\partial {x_i}}-\sum_{i=1}^d v \frac{\partial z}{\partial {x_i}} - \sum_{i=1}^d z \frac{\partial v}{\partial {x_i}} - \sum_{i=1}^d z \frac{\partial z}{\partial {x_i}} =\wt{f} \text{ in } \Omega \times (0,T), \\
        & v=0 \text{ on }\Gamma \times (0,T) ,\\
        & v(x,0)=u_0(x)- u_N(x,0) \text{ for all }x\in \Omega.
    \end{aligned} \right.
\end{equation}
Taking the inner product with $v$ and performing integration by parts in the first equation of \eqref{model1} leads to, 
\begin{align}\label{eqn-ee}
    \frac{1}{2}\frac{d}{dt}\|v\|^2 +\nu \|\nabla v\|^2 & =  - \frac{1}{2} \Big(\sum_{i=1}^d v \frac{\partial u}{\partial {x_i}},v \Big) + \Big(\sum_{i=1}^d z \frac{\partial v}{\partial x_i},u\Big) + \frac{1}{2} \Big(\sum_{i=1}^d v \frac{\partial z}{\partial {x_i}},v \Big)\nonumber \\
    & \qquad       + \Big( \sum_{i=1}^d z \frac{\partial z}{\partial {x_i}},v \Big) + ( \wt{f},v)\nonumber\\
    & = : \sum_{k=1}^5 I_k.
\end{align}
We use generalized H\"{o}lder's and Ladyzhenskaya's inequalities to estimate the first term in two dimensions as 
\begin{align*}
   |I_1|&= \Big\vert \frac{1}{2} \Big(\sum_{i=1}^d v \frac{\partial u}{\partial x_i}, v\Big)  \Big\vert   \le \frac{1}{2} \|\nabla u\| \|v\|_{L^4(\Omega)}^2    \le  C\|\nabla u\| \|v\|^{2-\frac d2}\|\nabla v\|^{\frac d2}
    \\& \le  \frac{\eta\nu}{5}\|\nabla v\|^2 + C\|\nabla u\|^{\frac{4}{4-d}}\|v\|^2,
\end{align*}
where the constant $\eta>0$ will be specified later. 
 Using generalized H\"older's and Cauchy Schwarz inequalities, we estimate $I_2$ as 
\begin{align*}
    |I_2|=
    \Big\vert \Big(\sum_{i=1}^d z \frac{\partial v}{\partial x_i},u \Big) \Big\vert \le \|\nabla v\| \|z\|_{L^4(\Omega)}\|u\|_{L^4(\Omega)} \le \frac{\eta\nu}{5} \|\nabla v\|^2+ C \|z\|_{L^4(\Omega)}^2\|u\|_{L^4(\Omega)}^2.
\end{align*}



\noindent Using analogous arguments, we estimate third term as
\begin{align*}
    |I_3| = \Big\vert \frac{1}{2} \Big(\sum_{i=1}^d v \frac{\partial z}{\partial x_i},v \Big)\Big\vert \le \frac{1}{2} \|\nabla z\| \|v\|_{L^4(\Omega)}^2    \leq  \frac{\eta\nu}{5}\|\nabla v\|^2 + C\|\nabla z\|^{\frac{4}{4-d}}\|v\|^2.
\end{align*}

\noindent In a similar way, we estimate $I_4$ as 

\begin{align*}
	|I_4|= \Big\vert \Big( \sum_{i=1}^d z\frac{\partial z}{\partial {x_i}},v \Big)\Big\vert &  \le  \|z\|_{L^\infty(\Omega)} \|\nabla z\| \|v\| \leq\frac{1}{2} \|\nabla z\|^2+\frac{1}{2}\|z\|_{L^\infty(\Omega)}^2 \|v\|^2 .
\end{align*}

\noindent H\"older's inequality leads to 
$$|I_5|=|( \wt{f},v) \le C \|\wt{f}\|_{H^{-1}(\Omega)}^2 + \frac{\eta\nu}{5}\|\nabla v\|^2.$$
Now, we choose $\eta=\frac{1}{2}$ and to proceed further. We combine the above estimates  and using it in \eqref{eqn-ee} to find 
\begin{align*}
    \frac{d}{dt}\|v(t)\|^2 +\nu \|\nabla v(t)\|^2 & \le  C \left( \|\nabla u(t)\|^{\frac{4}{4-d}} + \|\nabla z(t)\|^{\frac{4}{4-d}}  + \|z(t)\|_{L^{\infty}(\Omega)}^2\right) \|v(t)\|^2  \\
    & \quad + C \left(\|\wt{f}(t)\|_{H^{-1}(\Omega)}^2  +  \|z(t)\|_{L^4(\Omega)}^2\|u(t)\|_{L^4(\Omega)}^2 + \|\nabla z(t)\|^2\right).
\end{align*}
for a.e. $t\in[0,T]$. Integrating from $0$ to $t$ and then applying Gronwall's inequality, we deduce that
\begin{align}\label{eqn-final}
   & \|v(t)\|^2 +\nu \int_0^t \|\nabla v(s)\|^2 \, ds  
   \nonumber\\&\leq \left\{\|v(0)\|^2+ C\int_0^t \left(\|\wt{f}(s)\|_{H^{-1}(\Omega)}^2  +  \|z(s)\|_{L^4(\Omega)}^2\|u(s)\|_{L^4(\Omega)}^2 + \|\nabla z(s)\|^2\right)ds\right\}\nonumber\\&\quad\times\exp\left\{C\int_0^t \left( \|\nabla u(s)\|^{\frac{4}{4-d}} + \|\nabla z(s)\|^{\frac{4}{4-d}}  + \|z(s)\|_{L^{\infty}(\Omega)}^2 \right) ds\right\},
\end{align}
for all $t\in[0,T]$. The terms appearing in the exponential are finite by using \eqref{eq:energyestNHH} and \eqref{eqn-ener}. Using hypotheses and Lemma \ref{lem:Aux-Heat+NHM}, we note the following:
\begin{align*}
	& \|v(0)\|^2 = \|u_0(x)- u_N(x,0)\|^2 \le \varepsilon^2, \\
	& \int_0^T  \|\wt{f}(s)\|_{H^{-1}(\Omega)}^2\, ds\leq C \int_0^T  \|\wt{f}(s)\|_{L^{2}(\Omega)}^2\, ds \le C\varepsilon^2, \\
	&\int_0^T\|z(s)\|_{L^4(\Omega)}^2\|u(s)\|_{L^4(\Omega)}^2 ds\leq \sup_{s\in[0,T]}\|u(s)\|_{L^4(\Omega)}^2\int_0^T\|z(s)\|_{L^4(\Omega)}^2ds\leq C\varepsilon^2,\\
	&\int_0^T \|\nabla z(s)\|^2ds\leq C\varepsilon^2. 
\end{align*}
Therefore, from \eqref{eqn-final}, we deduce that 
\begin{align*}
	&\sup_{t\in[0,T]} \|v(t)\|^2 +\nu \int_0^T \|\nabla v(t)\|^2 \, dt\leq C\varepsilon^2. 
\end{align*}
Recall that 
$
u(x,t) - u_N(x,t)= w(x,t) = v(x,t)+z(x,t)
$
and therefore
\begin{align*}
    \sup_{[0,T]}\|u(t) - u_N(t)\|^2 + \nu \int_0^T \|\nabla (u(t) - u_N(t))\|^2 dt \le C\varepsilon^2,
\end{align*}
which completes the proof. 
\end{proof}


\begin{Theorem} \label{th:existDNNerrNStVB}
Let $f \in L^2(0,T; L^2(\Omega))$ and $u_0 \in H^1_0(\Omega)$. Then for a given $\varepsilon>0,$ there exists $u_N\in \mathcal{F}_N$ such that 
\begin{align}\label{eqn-est}
    \bigg\|\frac{\partial u_N}{\partial t}-\nu \Delta u_N+ \sum_{i=1}^d u_N \frac{\partial u_N}{\partial x_i} -f \bigg\|_{L^2(0,T; L^2(\Omega))}^2  &+ \| u_N|_\Gamma\|_{L^2(0,T; H^{1/2}(\Gamma))}^2 \nonumber\\
    & + \|u(x,0) - u_N(x,0)\|^2 \le C \varepsilon^2,
\end{align}
for some $C=C(\nu, T, \|\nabla u_0\|, \|f\|_{L^2(0,T; L^2(\Omega))}, C_a, C_s)>0.$
\end{Theorem}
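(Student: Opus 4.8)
The plan is to follow the strategy of Theorem~\ref{th:existDNNerrorStVB}, now in the parabolic setting: first approximate the strong solution $u$ of \eqref{eq:Burg} by a smooth function, then approximate that smooth function together with the derivatives entering the loss by a deep neural network, and finally bound the three terms of \eqref{eqn-est} by H\"older and Sobolev estimates. By Theorem~\ref{thm:GblSolHomBurg} the strong solution satisfies $u\in C([0,T];H^1_0(\Omega))\cap L^2(0,T;D(A))$ with $\partial_t u\in L^2(0,T;L^2(\Omega))$, so $u$ lies in the Hilbert space
\[
Y:=\bigl\{v:\ v\in L^2(0,T;H^2(\Omega)\cap H^1_0(\Omega)),\ \partial_t v\in L^2(0,T;L^2(\Omega))\bigr\},
\]
and, by the Lions--Magenes interpolation theorem, $Y\hookrightarrow C([0,T];H^1_0(\Omega))$, so the identity $u(\cdot,0)=u_0$ is meaningful. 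Since $C^\infty_c(\Omega)$ is dense in $H^2(\Omega)\cap H^1_0(\Omega)$ (as used in Theorem~\ref{th:existDNNerrorStVB}), a mollification in the time variable shows that functions $g\in C^\infty(\overline\Omega\times[0,T])$ with $g=0$ on $\Gamma\times[0,T]$ are dense in $Y$; hence for the given $\varepsilon>0$ I would fix such a $g$ with $\|u-g\|_Y\le \varepsilon/2$.

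Next I would apply the derivative-approximation result \cite[Theorem~3.3]{XieDNN_EA_CAMWA} on the bounded set $K:=\overline\Omega\times[0,T]\subset\mathbb R^{d+1}$. Because $g$ is smooth, each $D^\beta g$ with $|\beta|=2$ (the multi-index now acting on $(x,t)$) is Lipschitz on $K$, so, exactly as in the stationary proof, there is $u_N\in\mathcal F_N$ with $\sup_{K}|D^\alpha g-D^\alpha u_N|\le CL\,N^{-(3-|\alpha|)/(d+1)}$ for all $|\alpha|\le 2$; choosing $N$ large enough that the right-hand side is small for every such $\alpha$ gives $\|g-u_N\|_Y\le\varepsilon/2$, hence $\|u-u_N\|_Y\le\varepsilon$, and in particular $\|u-u_N\|_{C([0,T];H^1_0(\Omega))}\le C\varepsilon$. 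Combined with \eqref{eqn-ener} and $\|u-u_N\|_Y\le\varepsilon$, this also yields $\|u_N\|_{L^\infty(0,T;H^1(\Omega))}+\|u_N\|_{L^2(0,T;H^2(\Omega))}\le C(1+\|\nabla u_0\|+\|f\|_{L^2(0,T;L^2(\Omega))})$.

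It then remains to estimate the three terms in \eqref{eqn-est}. For the initial term, $\|u(\cdot,0)-u_N(\cdot,0)\|\le\|u-u_N\|_{C([0,T];L^2(\Omega))}\le C\varepsilon$. For the boundary term, $u|_\Gamma=0$ and the trace theorem give $\|u_N|_\Gamma\|_{L^2(0,T;H^{1/2}(\Gamma))}\le C_{Tr}\|u-u_N\|_{L^2(0,T;H^1(\Omega))}\le C\varepsilon$. For the PDE residual, I would replace $f$ by $\partial_t u-\nu\Delta u+\sum_{i=1}^d u\,\partial_{x_i}u$ from \eqref{eq:Burg} and write $\sum_i u_N\partial_{x_i}u_N-\sum_i u\,\partial_{x_i}u=\sum_i (u_N-u)\partial_{x_i}u_N+\sum_i u\,\partial_{x_i}(u_N-u)$; the linear part $\|\partial_t(u-u_N)\|_{L^2(0,T;L^2)}+\nu\|\Delta(u-u_N)\|_{L^2(0,T;L^2)}$ is bounded directly by $C\|u-u_N\|_Y\le C\varepsilon$, and for the nonlinear part I would use $H^2(\Omega)\hookrightarrow L^\infty(\Omega)$ ($d\le 3$), H\"older's inequality, and $Y\hookrightarrow C([0,T];H^1_0(\Omega))$ to bound
\[
\Bigl\|\textstyle\sum_i(u_N-u)\partial_{x_i}u_N\Bigr\|_{L^2(0,T;L^2)}\le \sup_{t\in[0,T]}\|\nabla u_N(t)\|\,\Bigl(\int_0^T\|(u_N-u)(t)\|_{L^\infty(\Omega)}^2\,dt\Bigr)^{1/2}\le C\|u_N-u\|_{L^2(0,T;H^2)}\le C\varepsilon
\]
and, similarly, $\bigl\|\sum_i u\,\partial_{x_i}(u_N-u)\bigr\|_{L^2(0,T;L^2)}\le C\|u-u_N\|_{C([0,T];H^1)}\|u\|_{L^2(0,T;H^2)}\le C\varepsilon$. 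Collecting these bounds yields \eqref{eqn-est} with $C$ depending on $\nu,T$, the data, and the embedding constants $C_a,C_s$. The main obstacle is the first step: verifying that $u$ has enough regularity to be approximated in $Y$ and that smooth functions vanishing on the lateral boundary are dense in $Y$ (the $H^2\cap H^1_0$ density subtlety), together with the use of the neural-network derivative-approximation estimate uniformly in the enlarged variable $(x,t)\in\mathbb R^{d+1}$; once these are secured, the residual estimates are routine.
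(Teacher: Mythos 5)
Your proposal is correct and follows essentially the same route as the paper: approximate the strong solution $u$ (whose regularity comes from Theorem \ref{thm:GblSolHomBurg}) by a smooth function, invoke the derivative-approximation result of \cite{XieDNN_EA_CAMWA} to get $u_N\in\mathcal F_N$ with $\|u-u_N\|_{L^2(0,T;H^2(\Omega))}+\|u-u_N\|_{H^1(0,T;L^2(\Omega))}\le\varepsilon$, upgrade to $C([0,T];H^1(\Omega))$ by interpolation, and then bound the residual, trace and initial terms exactly as you do. The only (favourable) difference is that you are more careful about the density step, approximating by smooth functions vanishing only on the lateral boundary $\Gamma\times[0,T]$ rather than by functions compactly supported in $\Omega\times(0,T)$, which avoids the mismatch at $t=0$ that the paper's terser density claim glosses over.
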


\begin{proof}
 Since $f \in L^2(0,T; L^2(\Omega))$ and $u_0 \in H^1_0(\Omega),$ due to Theorem \ref{thm:GblSolHomBurg}, \eqref{eq:Burg} admits a unique strong solution $u \in L^2(0,T; H^2(\Omega)\cap H^1_0(\Omega)) \cap L^\infty(0,T; H^1_0(\Omega)) \cap H^1(0,T; L^2(\Omega)).$ Since, $C_c^\infty(\Omega \times (0,T))$ is dense in $ L^2(0,T; H^2(\Omega)\cap H^1_0(\Omega)) \cap H^1(0,T; L^2(\Omega)),$ we can proceed as in the proof of Theorem  \ref{th:existDNNerrorStVB}
and find $u_N \in \mathcal{F}_N$ such that 
\begin{equation}\label{eqn:existDNN_Burge_Norm}
\begin{aligned}
   &  \|u- u_N\|_{L^2(0,T; H^2(\Omega))}+\|u- u_N\|_{H^1(0,T; L^2(\Omega))}\le \varepsilon. \\
\end{aligned}
\end{equation}
An application of \cite[Theorem 4, Chapter 5, Section 5.9]{Eva}, the second estimate in \eqref{eqn:existDNN_Burge_Norm} implies 
\begin{align}\label{eqn-con}
	\|u-u_N\|_{C([0,T]; H^1(\Omega))} \le C \big(\|u- u_N\|_{L^2(0,T; H^2(\Omega))}+\|u- u_N\|_{H^1(0,T; L^2(\Omega)} \big)\le C\varepsilon.
\end{align}
Using \eqref{eq:Burg} and triangle inequality, we have
\begin{align*}
&\bigg\| \frac{\partial u_N}{\partial t}-  \nu \Delta u_N+ \sum_{i=1}^d u_N \frac{\partial u_N}{\partial x_i} -f   \bigg\|^2 \nonumber\\& = \bigg\| \frac{\partial u_N}{\partial t}-\nu \Delta u_N+ \sum_{i=1}^d u_N \frac{\partial u_N}{\partial x_i} - \frac{\partial u}{\partial t}+\nu \Delta u - \sum_{i=1}^d u \frac{\partial u}{\partial x_i} \bigg\|^2 \\
& \le C\left( \Big\| \frac{\partial }{\partial t}  (u-u_N)\Big\|^2 + \nu \|\Delta (u-u_N)\|^2 + \bigg\| \sum_{i=1}^d u \bigg(\frac{\partial u}{\partial x_i} - \frac{\partial u_N}{\partial x_i} \bigg) \bigg\|^2 \right.\nonumber\\&\quad\left.+  \bigg\| \sum_{i=1}^d (u-u_N)\frac{\partial u_N}{\partial x_i} \bigg\|^2\right).
\end{align*}
From \eqref{eqn:existDNN_Burge_Norm} and \eqref{eqn-con}, one can easily deduce 
\begin{equation}\label{eqn:exstDNNallest}
	\begin{aligned}
		 \|u(x,0) - u_N(x,0)\| &\le \varepsilon, \\
		\Big\| \frac{\partial }{\partial t}  (u-u_N)\Big\|_{L^2(0,T; L^2(\Omega))} &\le \| u-u_N\|_{H^1(0,T; L^2(\Omega))} \le  \varepsilon , \\
		\nu \|\Delta (u-u_N)\|_{L^2(0,T; L^2(\Omega))}& \le \nu \|u-u_N\|_{L^2(0,T; H^2(\Omega))} \le  \nu\varepsilon, \\
		\bigg\| \sum_{i=1}^d  u \Big(\frac{\partial u}{\partial x_i} - \frac{\partial u_N}{\partial x_i} \Big)\bigg\|_{L^2(0,T; L^2(\Omega))}^2& \le \int_0^T \|  \nabla (u - u_N)(t)\|_{L^4(\Omega)}^2  \|u(t)\|_{L^4(\Omega)}^2 dt \\
		&  \le C \|u\|_{L^\infty(0,T; L^4(\Omega))}^2 \|  u -  u_N\|_{L^2(0,T;H^2(\Omega))}^2 \\&\le C\|u\|_{L^\infty(0,T; H^1(\Omega))} \varepsilon^2, 
	\end{aligned}
\end{equation}
%
and 
\begin{equation} 
	\begin{aligned}
		 \bigg\| \sum_{i=1}^d (u-u_N)\frac{\partial u_N}{\partial x_i} \bigg\|_{L^2(0,T; L^2(\Omega))}^2& \le \int_0^T \|(u - u_N)(t)\|_{L^\infty(\Omega)}^2  \|\nabla u_N(t)\|^2 dt \\
		&  \le C \|u-u_N\|_{L^2(0,T; H^2(\Omega))}^2 \|u_N\|_{L^\infty(0,T; H^1(\Omega))}^2 \\& \le \|u_N\|_{L^\infty(0,T; H^1(\Omega))}^2\varepsilon^2.
	\end{aligned}
\end{equation}
Furthermore, note that 
\begin{equation} \label{eq:exstDNNest23}
\begin{aligned}
    \|u_N\|_{L^\infty(0,T; H^1(\Omega))} & \le \|u-u_N\|_{L^\infty(0,T; H^1(\Omega))} + \| u\|_{L^\infty(0,T; H^1(\Omega))} \\
    &\le \varepsilon + \|u\|_{L^\infty(0,T; H^1(\Omega))} \le 2\|u\|_{L^\infty(0,T; H^1(\Omega))},
\end{aligned}
\end{equation}
and
\begin{equation}  \label{eq:exstDNNest24}
\begin{aligned}
    \|u_N|_{\Gamma}\|_{L^2(0,T; H^{1/2}(\Gamma))} &= \|u|_{\Gamma}-u_N|_{\Gamma}\|_{L^2(0,T; H^{1/2}(\Gamma))}  \\
    & =\| (u-u_N)|_{\Gamma}\|_{L^2(0,T; H^{1/2}(\Gamma))} \\
    &\le C_{Tr}\|u-u_N\|_{L^2(0,T;H^1(\Omega))} \le C_{Tr} \varepsilon.
\end{aligned}
\end{equation}
Combining the estimates in \eqref{eqn:exstDNNallest} along with the estimates \eqref{eq:exstDNNest23} and \eqref{eq:exstDNNest24}, we deduce the required result \eqref{eqn-est}. 
\end{proof}

\subsection{Stability}
In this section, we discuss the stability of the DNN scheme. We start with the following lemma.

\begin{Lemma} \label{lem:aux-4-stabScheme}
Let $u_1$ and $u_2$ be the strong solutions of 
\begin{equation*}
\left\{ 
\begin{aligned}
& \frac{\partial u_{1}}{\partial t}-\nu \Delta u_1 +\sum_{i=1}^d u_1 \frac{\partial u_1}{\partial x_i}= f_1 \text{ in } \Omega\times (0,T), \\
& u_1=0 \text{ on }\Gamma\times (0,T), \\
& u_1(x,0)=u_{1,0}(x) \text{ in }\Omega, 
\end{aligned}\right.
\end{equation*}

and
\begin{equation*}
	\left\{ 
	\begin{aligned}
& \frac{\partial u_{2}}{\partial t}-\nu \Delta u_2 +\sum_{i=1}^d u_2 \frac{\partial u_2}{\partial x_i}= f_2 \text{ in } \Omega\times (0,T), \\
& u_2=0 \text{ on }\Gamma\times (0,T), \\
& u_2(x,0)=u_{2,0}(x) \text{ in }\Omega,
\end{aligned}\right.
\end{equation*}
respectively, where $f_1,f_2 \in L^2(0,T;L^2(\Omega))$ and $u_{1,0}, u_{2,0} \in H^1_0(\Omega).$ Then the following stability  estimate holds:
\begin{align*}
   &  \sup_{t\in [0,T]}\|u_1(t)-u_2(t)\|^2+\nu \int_0^T \|\nabla (u_1(s)-u_2(s))\|^2 ds\nonumber\\& \le C(\|u_{1,0}-u_{2,0}\|^2 + \|f_1-f_2\|_{L^2(0,T; L^2(\Omega))}^2),
\end{align*}
for some constant $C=C\left(\|\nabla u_{1,0}\|,\|f_1\|_{L^2(0,T;L^2(\Omega))} \right)>0.$
\end{Lemma}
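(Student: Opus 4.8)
The plan is to set $w:=u_1-u_2$, subtract the two Burgers systems, and derive an energy identity for $\|w(t)\|^2$. Writing the difference of the nonlinear terms as
$$\sum_{i=1}^d u_1\frac{\partial u_1}{\partial x_i}-\sum_{i=1}^d u_2\frac{\partial u_2}{\partial x_i}=\sum_{i=1}^d w\frac{\partial u_1}{\partial x_i}+\sum_{i=1}^d u_2\frac{\partial w}{\partial x_i},$$
one sees that $w$ solves a linear (in $w$) convection-diffusion problem with homogeneous boundary data, right-hand side $f_1-f_2$, and initial datum $u_{1,0}-u_{2,0}$. Since both $u_1,u_2$ are strong solutions, $w$ is regular enough ($w\in L^2(0,T;H^2\cap H^1_0)\cap H^1(0,T;L^2)$) to justify testing with $w$ and the integration-by-parts manipulations below, exactly as in Lemma~\ref{lem:Aux-Heat+NHM} and Theorem~\ref{thm:GblSolHomBurg}.

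Next I would take the $L^2(\Omega)$ inner product of the equation for $w$ with $w$. The diffusion term gives $\nu\|\nabla w\|^2$. For the nonlinear contributions, note first that $\big(\sum_i u_2\frac{\partial w}{\partial x_i},w\big)=\frac12\int_\Omega(\xi u_2)\cdot\nabla(w^2)=-\frac12\big(\sum_i\frac{\partial u_2}{\partial x_i},w^2\big)$ after integrating by parts (boundary term vanishes since $w|_\Gamma=0$), so that term combines with $\big(\sum_i w\frac{\partial u_1}{\partial x_i},w\big)$ to leave something controlled by $\|\nabla u_1\|$ (resp. $\|\nabla u_2\|$) times $\|w\|_{L^4(\Omega)}^2$. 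Then I bound $\|w\|_{L^4(\Omega)}^2$ using Ladyzhenskaya's inequality in $d=2$ ($\|w\|_{L^4}^2\le C\|w\|\|\nabla w\|$) and its three-dimensional analogue ($\|w\|_{L^4}^2\le C\|w\|^{1/2}\|\nabla w\|^{3/2}$), i.e.\ the Gagliardo--Nirenberg inequality (Theorem~\ref{thm:G-NinBdd}), and absorb the $\|\nabla w\|$ factors into $\nu\|\nabla w\|^2$ via Young's inequality. The source term is handled by $|(f_1-f_2,w)|\le C\|f_1-f_2\|^2+\frac{\nu}{4}\|\nabla w\|^2$ together with Poincar\'e. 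This yields a differential inequality of the form
$$\frac{d}{dt}\|w(t)\|^2+\nu\|\nabla w(t)\|^2\le C\big(\|\nabla u_1(t)\|^{\frac{4}{4-d}}+\|\nabla u_2(t)\|^{\frac{4}{4-d}}\big)\|w(t)\|^2+C\|f_1(t)-f_2(t)\|^2$$
for a.e.\ $t\in[0,T]$.

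I would then integrate this from $0$ to $t$ and apply the Gronwall inequality (Lemma~\ref{lem:Gronwall}). The exponential factor is $\exp\big(C\int_0^T(\|\nabla u_1\|^{\frac{4}{4-d}}+\|\nabla u_2\|^{\frac{4}{4-d}})\,dt\big)$, which is finite because the energy estimate \eqref{eqn-ener} of Theorem~\ref{thm:GblSolHomBurg} gives $u_1,u_2\in L^\infty(0,T;H^1_0(\Omega))$, hence $\|\nabla u_i\|\in L^\infty(0,T)$; this constant depends only on $\|\nabla u_{1,0}\|$, $\|\nabla u_{2,0}\|$, $\|f_1\|_{L^2(0,T;L^2(\Omega))}$, $\|f_2\|_{L^2(0,T;L^2(\Omega))}$, $\nu$ and $T$, consistent with the stated dependence. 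The conclusion $\sup_{t\in[0,T]}\|w(t)\|^2+\nu\int_0^T\|\nabla w(s)\|^2\,ds\le C(\|u_{1,0}-u_{2,0}\|^2+\|f_1-f_2\|_{L^2(0,T;L^2(\Omega))}^2)$ follows by taking the supremum over $t$ on the left (the $\nu\int_0^T\|\nabla w\|^2$ term is kept from before Gronwall is applied, or recovered by a second integration).

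The main obstacle is really just the careful bookkeeping of the nonlinear terms in the three-dimensional case: one must check that after cancellation the remaining cubic term $\big(\sum_i w\frac{\partial u_i}{\partial x_i},w\big)$ can genuinely be absorbed, i.e.\ that the exponent $\frac{4}{4-d}$ produced by Young's inequality applied to $\|\nabla u_i\|\,\|w\|^{1/2}\|\nabla w\|^{3/2}$ pairs the $\|\nabla w\|^{3/2}$ against the $\frac12$ power needed to leave $\|\nabla w\|^2$, and that the leftover power of $\|\nabla u_i\|$ is integrable in time—which it is, since $\|\nabla u_i\|\in L^\infty(0,T)$ makes any finite power integrable. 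Everything else (boundary terms vanishing, regularity to justify the computations) is routine given the results already established.
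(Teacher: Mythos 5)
Your proposal is correct and follows the same overall skeleton as the paper: subtract the two systems, test the equation for the difference with the difference itself, control the convective terms via Ladyzhenskaya/Gagliardo--Nirenberg and Young's inequality, and close with Gronwall using the a priori bound \eqref{eqn-ener}. The one substantive divergence is in the treatment of the nonlinearity. You keep the symmetric decomposition $\sum_i w\,\partial_{x_i}u_1+\sum_i u_2\,\partial_{x_i}w$ and estimate both pieces, which leaves you with a Gronwall weight $\|\nabla u_1\|^{4/(4-d)}+\|\nabla u_2\|^{4/(4-d)}$ and hence a constant depending on the data of \emph{both} solutions. The paper pushes the algebra one step further: substituting $u_2=u_1-w$ and using that $\bigl(\sum_i w\,\partial_{x_i}w,\,w\bigr)=0$ for $w\in H^1_0(\Omega)$, the entire nonlinear contribution collapses to the single term $-\bigl(\sum_i u_1\,\partial_{x_i}w,\,w\bigr)$, which is then bounded by $\frac{\nu}{4}\|\nabla w\|^2+C\|u_1\|_{L^4(\Omega)}^{8/(4-d)}\|w\|^2$. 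This is what yields the constant $C=C(\|\nabla u_{1,0}\|,\|f_1\|_{L^2(0,T;L^2(\Omega))})$ depending only on the first solution, exactly as stated in the lemma; your version proves the same inequality but with a constant that a priori also involves $\|\nabla u_{2,0}\|$ and $\|f_2\|_{L^2(0,T;L^2(\Omega))}$ (contrary to your remark that this is ``consistent with the stated dependence''). The discrepancy is harmless for the application in Theorem \ref{th:stabNStVB}, where the constant is allowed to depend on all four quantities anyway, but if you want the dependence as literally stated you should perform the paper's extra cancellation. Everything else in your argument --- the vanishing boundary terms, the exponents in Young's inequality, and the integrability of the Gronwall weight from $u_i\in L^\infty(0,T;H^1_0(\Omega))$ --- checks out.
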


\begin{proof}
Subtracting the second equation from first and setting $u=u_1-u_2$, we find
\begin{equation}\label{model2}
	\left\{ 
	\begin{aligned}
    & \frac{\partial u}{\partial t}-\nu \Delta u + \sum_{i=1}^d u_1 \frac{\partial u_1}{\partial x_i} -  \sum_{i=1}^d u_2 \frac{\partial u_2}{\partial x_i} =f_1-f_2=:f \text{ in } \Omega \times (0,T),\\
    & u=0 \text{ on }\Gamma\times (0,T), \\
    & u(x,0)=u_{1,0}(x)-u_{2,0}(x)=:u_0(x) \text{ in }\Omega.
\end{aligned}\right.
\end{equation}
Performing integration by parts, we have 
\begin{align*}
	\bigg(\sum_{i=1}^d u_1 \frac{\partial u_1}{\partial x_i} -  \sum_{i=1}^d u_2 \frac{\partial u_2}{\partial x_i},u\bigg) &=\bigg(\sum_{i=1}^d u\frac{\partial u_1}{\partial x_i} + \sum_{i=1}^d u_2 \frac{\partial u}{\partial x_i},u\bigg)  \nonumber\\&=\bigg(\sum_{i=1}^d u\frac{\partial u_1}{\partial x_i} - \sum_{i=1}^d u\frac{\partial u}{\partial x_i}+ \sum_{i=1}^d u_1\frac{\partial u}{\partial x_i},u\bigg) \nonumber\\&= -\bigg( \sum_{i=1}^d u_1\frac{\partial u}{\partial x_i},u\bigg). 
\end{align*}
Taking the inner product with $u$ in the first equation of \eqref{model2} and applying integration by parts, it holds that
\begin{align*}
   &\frac{1}{2}\frac{d}{dt}\|u(t)\|^2+\nu \|\nabla u(t)\|^2 - \Big(\sum_{i=1}^d u_1(t) \frac{\partial u(t)}{\partial x_i},u(t)\Big)=(f(t),u(t)),
\end{align*}
for a.e. $t\in[0,T]$. Now, using generalized H\"older's and Gagliardo-Nirenberg's inequalities, we calculate 
\begin{align*}
	\Big\vert \Big(\sum_{i=1}^d u_1 \frac{\partial u}{\partial x_i},u\Big) \Big\vert & \le \|\nabla u\| \|u_1\|_{L^4(\Omega)}\|u\|_{L^4(\Omega)} \\
	& \le C \|\nabla u\|^{\frac{4+d}{4}} \|u_1\|_{L^4(\Omega)}\|u\|^{\frac{4-d}{4}} \le \frac{\nu}{4}\|\nabla u\|^2 + C \|u_1\|^{\frac{8}{4-d}}_{L^4(\Omega)}\|u\|^2.
\end{align*}
Therefore, we have for a.e. $t\in[0,T]$
\begin{align*}
    \frac{d}{dt}\|u(t)\|^2 +\nu\|\nabla u(t)\|^2 \le   C\|f(t)\|_{H^{-1}(\Omega)}^2 + C \|u_1(t)\|^{\frac{8}{4-d}}_{L^4(\Omega)}\|u(t)\|^2.
\end{align*}
Integrating from $0$ to $t$ and using Gronwall's inequality, we finally deduce 
\begin{align*}
  &  \sup_{t\in [0,T]}\|u(t)\|^2+\nu \int_0^T \|\nabla u(s)\|^2 ds \nonumber\\& \le C(\|u_0\|^2 + \|f\|_{L^2(0,T; H^{-1}(\Omega))}^2) \exp\left\lbrace\int_0^T \|u_1(t)\|^{\frac{8}{4-d}}_{L^4(\Omega)} dt\right\rbrace \\
    & \le C \left(\|u_{1,0}- u_{2,0}\|^2 + \|f_1-f_2\|_{L^2(0,T; H^{-1}(\Omega))}^2\right),
\end{align*}
for some $C=C\left(\|\nabla u_{1,0}\|,\|f_1\|_{L^2(0,T;L^2(\Omega))}\right)>0.$ This concludes the proof.
\end{proof}

\begin{Theorem} \label{th:stabNStVB}
Let $u_{N_1}\in \mathcal{F}_{N_1}$ and $u_{N_2}\in \mathcal{F}_{N_2}$ be approximated solutions of 
\begin{equation*}
	\left\{ 
	\begin{aligned}
& \frac{\partial u_{1}}{\partial t}-\nu \Delta u_1 +\sum_{i=1}^d u_1 \frac{\partial u_1}{\partial x_i}= f_1 \text{ in } \Omega\times (0,T), \\
& u_1=0 \text{ on }\Gamma\times (0,T), \\
& u_1(x,0)=u_{1,0}(x) \text{ in }\Omega, 
\end{aligned}\right.
\end{equation*}
and
\begin{equation*}
	\left\{ 
	\begin{aligned}
& \frac{\partial u_{2}}{\partial t}-\nu \Delta u_2 +\sum_{i=1}^d u_2 \frac{\partial u_2}{\partial x_i}= f_2 \text{ in } \Omega\times (0,T), \\
& u_2=0 \text{ on }\Gamma\times (0,t), \\
& u_2(x,0)=u_{2,0}(x) \text{ in }\Omega,
\end{aligned}\right.
\end{equation*}
respectively, such that the hypotheses in Theorem \ref{thm:err-aux-1} hold for any given $\varepsilon>0$. Then we have 
\begin{align*}
  &  \sup_{t\in [0,T]}\|u_{N_1}(t) - u_{N_2}(t)\|^2 +\nu \int_0^T \|\nabla (u_{N_1} - u_{N_2})(t)\|^2 dt \\&
     \qquad \le C \left( \varepsilon^2 + \|u_{1,0}-u_{2,0}\|^2 + \|f_1-f_2\|_{L^2(0,T; L^2(\Omega))}^2 \right),
\end{align*}
for some $C=C\left(\|\nabla u_{1,0}\|, \|\nabla u_{2,0}\|,\|f_1\|_{L^2(0,T;L^2(\Omega))}, \|f_2\|_{L^2(0,T;L^2(\Omega))} \right)>0.$
\end{Theorem}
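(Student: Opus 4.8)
The plan is to compare each neural network approximant with the corresponding exact strong solution and then glue together the already-established a posteriori error bound and the stability estimate for the exact problem via the triangle inequality. Let $u_1$ and $u_2$ denote the unique strong solutions of the two Burgers systems with data $(f_1,u_{1,0})$ and $(f_2,u_{2,0})$ respectively; these exist by Theorem \ref{thm:GblSolHomBurg} since $f_i\in L^2(0,T;L^2(\Omega))$ and $u_{i,0}\in H^1_0(\Omega)$. Writing $u_{N_1}-u_{N_2}=(u_{N_1}-u_1)+(u_1-u_2)+(u_2-u_{N_2})$, the triangle inequality together with the elementary bound $(a+b+c)^2\le 3(a^2+b^2+c^2)$ reduces the problem to estimating the three pieces separately in the norm $\sup_{t\in[0,T]}\|\cdot\|^2+\nu\int_0^T\|\nabla\cdot\|^2\,dt$.

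First I would control the two approximation errors $u_{N_1}-u_1$ and $u_{N_2}-u_2$. By hypothesis the assumptions of Theorem \ref{thm:err-aux-1} hold for $u_{N_1}$ relative to the first system and for $u_{N_2}$ relative to the second, with the same $\varepsilon>0$; applying that theorem twice yields
\[
\sup_{t\in[0,T]}\|u_{N_i}(t)-u_i(t)\|^2+\nu\int_0^T\|\nabla(u_{N_i}(s)-u_i(s))\|^2\,ds\le C\varepsilon^2,\qquad i=1,2,
\]
with $C$ depending on $\nu,T,\|\nabla u_{i,0}\|,\|f_i\|_{L^2(0,T;L^2(\Omega))}$ and the embedding constants. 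Next I would control $u_1-u_2$: since $f_1,f_2\in L^2(0,T;L^2(\Omega))$ and $u_{1,0},u_{2,0}\in H^1_0(\Omega)$, Lemma \ref{lem:aux-4-stabScheme} applies directly and gives
\[
\sup_{t\in[0,T]}\|u_1(t)-u_2(t)\|^2+\nu\int_0^T\|\nabla(u_1(s)-u_2(s))\|^2\,ds\le C\big(\|u_{1,0}-u_{2,0}\|^2+\|f_1-f_2\|_{L^2(0,T;L^2(\Omega))}^2\big).
\]

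Finally I would assemble the pieces: taking the supremum over $t$ (the supremum of a sum is at most the sum of the suprema) and integrating the gradient contribution, the convexity inequality above gives
\[
\sup_{t\in[0,T]}\|u_{N_1}(t)-u_{N_2}(t)\|^2+\nu\int_0^T\|\nabla(u_{N_1}-u_{N_2})(s)\|^2\,ds
\le C\big(\varepsilon^2+\|u_{1,0}-u_{2,0}\|^2+\|f_1-f_2\|_{L^2(0,T;L^2(\Omega))}^2\big),
\]
with $C$ now depending on all four quantities $\|\nabla u_{1,0}\|,\|\nabla u_{2,0}\|,\|f_1\|_{L^2(0,T;L^2(\Omega))},\|f_2\|_{L^2(0,T;L^2(\Omega))}$, which is exactly the claimed bound. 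There is no genuine analytic obstacle here: the argument is a triangle-inequality bootstrapping of two previously proved results, and the only point requiring care is the bookkeeping of hypotheses — specifically that the boundary residual in the assumption of Theorem \ref{thm:err-aux-1} is measured in the $H^{\frac12,\frac14}(\Gamma\times(0,T))$ norm, so that Lemma \ref{lem:Aux-Heat+NHM} (hence Theorem \ref{thm:err-aux-1}) applies to both $u_{N_1}$ and $u_{N_2}$, and that the various generic constants from the invoked results are collected into a single constant depending symmetrically on the data of both problems.
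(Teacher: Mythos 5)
Your proposal is correct and follows essentially the same route as the paper: both insert the exact strong solutions $u_1,u_2$ via the triangle inequality and then invoke Theorem \ref{thm:err-aux-1} twice together with Lemma \ref{lem:aux-4-stabScheme}. Your version is in fact slightly more careful than the paper's, which omits the squaring step $(a+b+c)^2\le 3(a^2+b^2+c^2)$ and the bookkeeping of the constants that you spell out.
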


\begin{proof}
Let us add and subtract $u_1$ and $u_2$ to get 
\begin{align*}
\|u_{N_1} - u_{N_2}\| &= \|u_{N_1} - u_1+ u_2 - u_{N_2} + u_1 - u_2\| \\&\le \|u_{N_1} - u_1\| + \|u_2 - u_{N_2}\| + \|u_1 - u_2\|.
\end{align*}
Using Theorem \ref{thm:err-aux-1} and Lemma \ref{lem:aux-4-stabScheme}, the required result follows. 
\end{proof}

\section{Numerical Experiments}   \label{sec:NE}
For the numerical experiments, we consider the two-dimensional Burgers’ equation in both the non-stationary (Section~\ref{sec:nonstat-exp}) and stationary (Section~\ref{sec:stat-exp}) cases, with the details of the PDE provided in their respective sections.

For the PINN architecture, we employ a fully connected feed-forward neural network (FNN) to approximate the solution for both cases. The FNN consists of two hidden layers, each containing 32 neurons and uses \textrm{tanh} as activation function. The training data includes 8,000 points sampled from the interior, 500 points from the boundary, and 500 points from the initial time $t=0$. Training is performed in two stages: we first use the Adam optimizer for 3,000 epochs, followed by the L-BFGS optimizer for an additional 5,000 epochs. To further enhance accuracy, we apply the residual-based adaptive refinement (RAR) method, a widely used technique for solving the Burgers equation with PINNs \cite{lulu-rar}. Specifically, we randomly generate 100,000 points within the domain to compute the PDE residual and iteratively add points until the mean residual falls below $5\times 10^{-3}$.  As before, we train the network using Adam first and then refine it further with L-BFGS. We implement the PINN and train the network using the DeepXDE library~\cite{lu2021deepxde}. For testing, we sample 1,000 equally spaced points in the spatial domain to evaluate the error bounds.

\subsection{Stationary case}\label{sec:stat-exp}
For the stationary case, we consider the Burgers equation with Dirichlet boundary conditions on the domain $\Omega = [0,1]\times [0,1]$ with the forcing term 
\begin{align*}
	f(x_1,x_2) = - 2 \pi \sin(2 \pi x_1) \sin(2 \pi x_2) &\left[  \cos(2\pi x_1) \sin(2 \pi x_2) + \cos(2\pi x_2 )\sin(2\pi x_1) + 4 \pi \nu  \right].
\end{align*}

For this setup, equation \eqref{eqn:St-Burg} admits the exact solution
\begin{equation}\label{eq:per_true_sol_stat}
	u(x_1,x_2) =  \sin(2 \pi x_1)\, \sin(2 \pi x_2),
\end{equation}
serving as the reference  In our experiments, we use $\nu = \pi/4$.

Figure \ref{fig:truevspredict-stat} compares the true solution from equation \eqref{eq:per_true_sol_stat} with the PINN approximation. The left panel illustrates the exact solution, the middle panel shows the corresponding PINN-generated approximation and right panel illustrated the absolute error between the two. The $H^1$-error and residual error in this case are $5.4862 \times 10^{-4}$ and $3.2998 \times 10^{-3}$, resp.

\begin{figure}[h!]
	\includegraphics[width=0.2\linewidth]{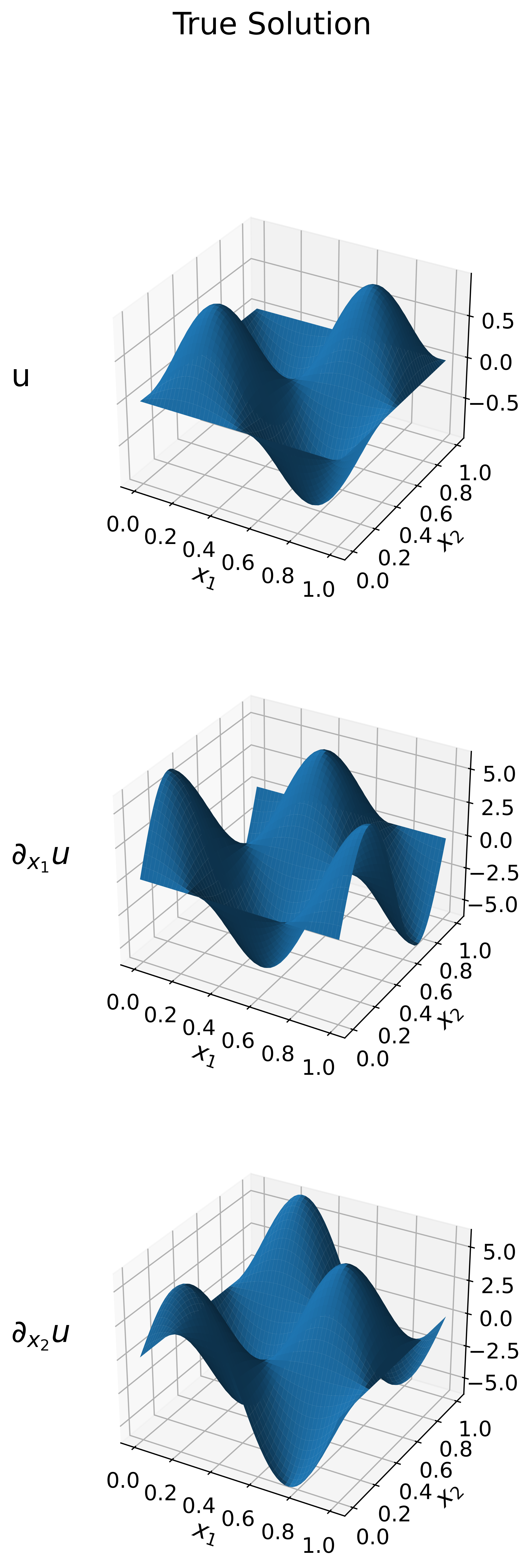}\,
	\includegraphics[width=0.2\linewidth]{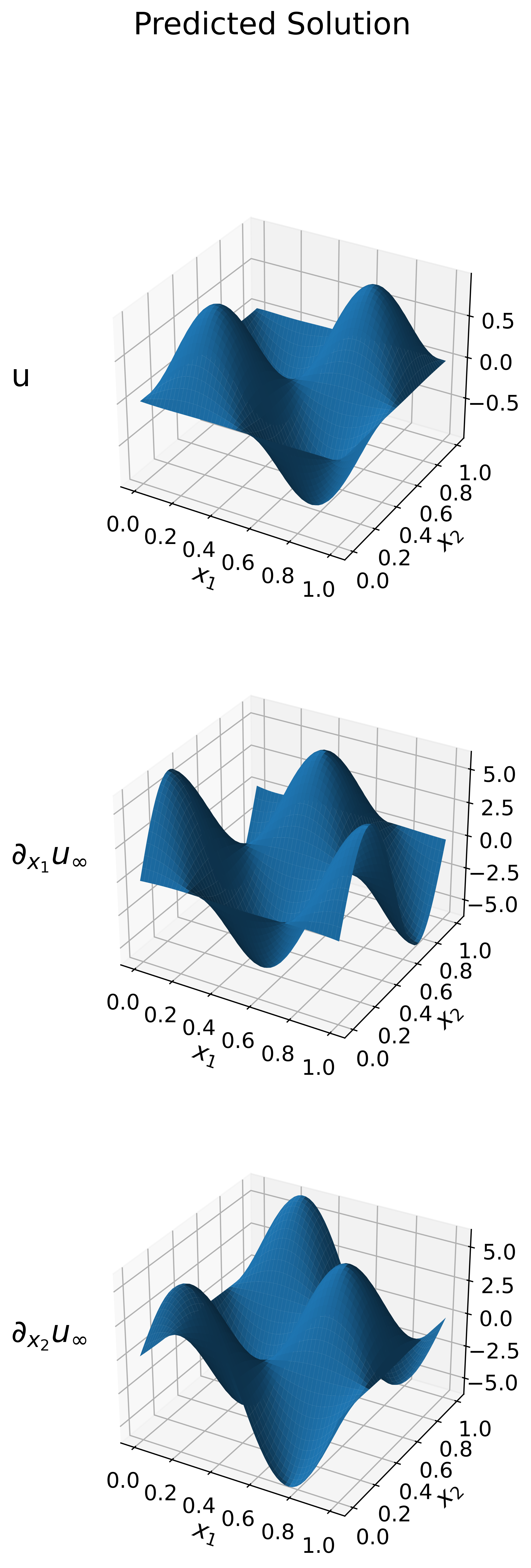}\,
	\includegraphics[width=0.43\linewidth]{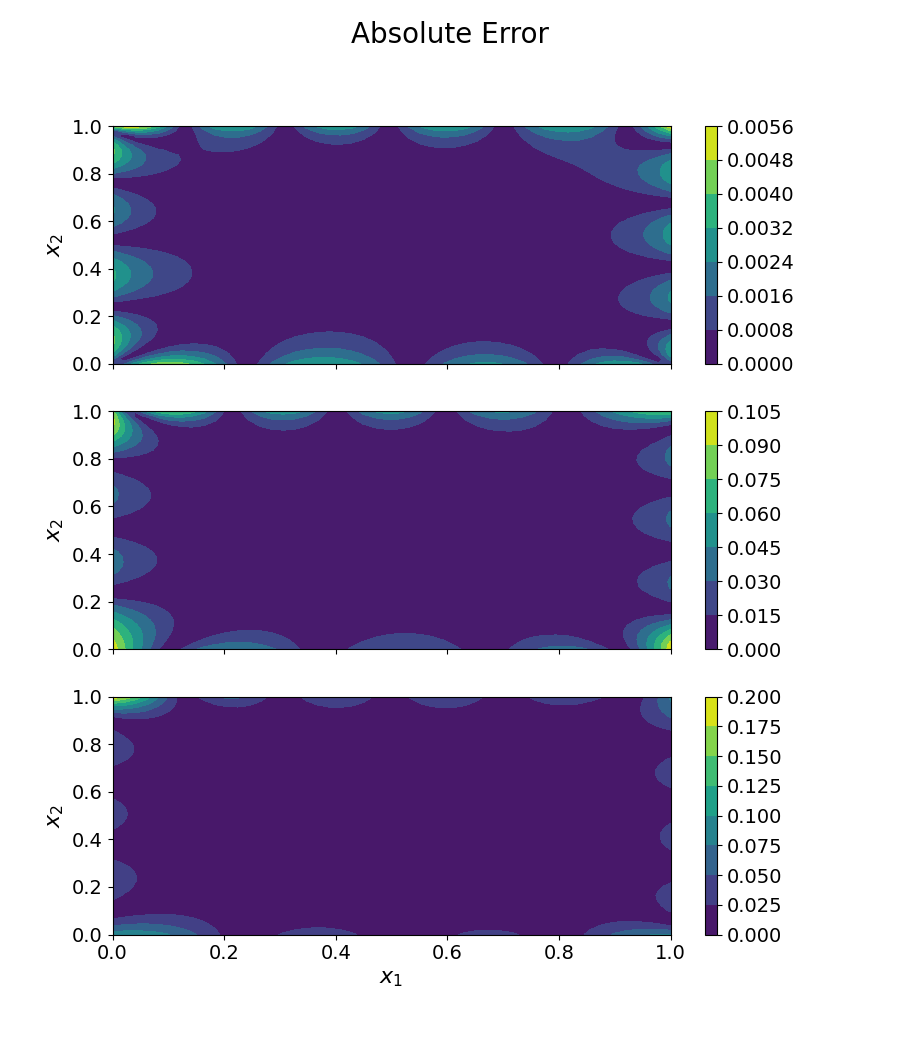}
	\caption{(Left) Comparison between the true solution from equation \eqref{eq:per_true_sol_stat} (left) and the PINN approximation (middle) along with the absolute error (right).}
	\label{fig:truevspredict-stat}
\end{figure}

\subsection{Non-stationary case}\label{sec:nonstat-exp}
We consider the Burgers equation with periodic boundary conditions on the domain $\Omega = [0,1]\times [0,1]$ with the forcing term 
\begin{align*}
	f(x_1,x_2,t) = - 2 \pi \exp(-4\pi^2 \nu t) &\left[( \cos(2\pi (x_1-t)) + \cos(2\pi (x_2-t) )  ) \right. \\ &\left. (1 - \exp(-4\pi^2 \nu t)(\sin(2 \pi (x_1-t)) + \sin(2 \pi (x_2-t))) ) \right]
\end{align*}
and initial condition $u(x_1,x_2,0) = \sin(2 \pi (x_1))\, \sin(2 \pi (x_2))$.

For this setup, equation \eqref{eq:Burg} admits the exact solution
\begin{equation}\label{eq:per_true_sol}
	u(x_1,x_2,t) = \exp(-4\pi^2 \nu t)\, \sin(2 \pi (x_1-t))\, \sin(2 \pi (x_2-t)),
\end{equation}
which serves as a reference to validate our error estimates. In our experiments, we use $\nu = 0.01$.

Figure \ref{fig:truevspredict} compares the true solution from equation \eqref{eq:per_true_sol} with the PINN approximation at three time stamps $t = 0,\,0.5$ and $1$. The left panel illustrates the exact solution, while the right panel shows the corresponding PINN-generated approximation. 

\begin{figure}[h!]
	\includegraphics[width=0.45\linewidth]{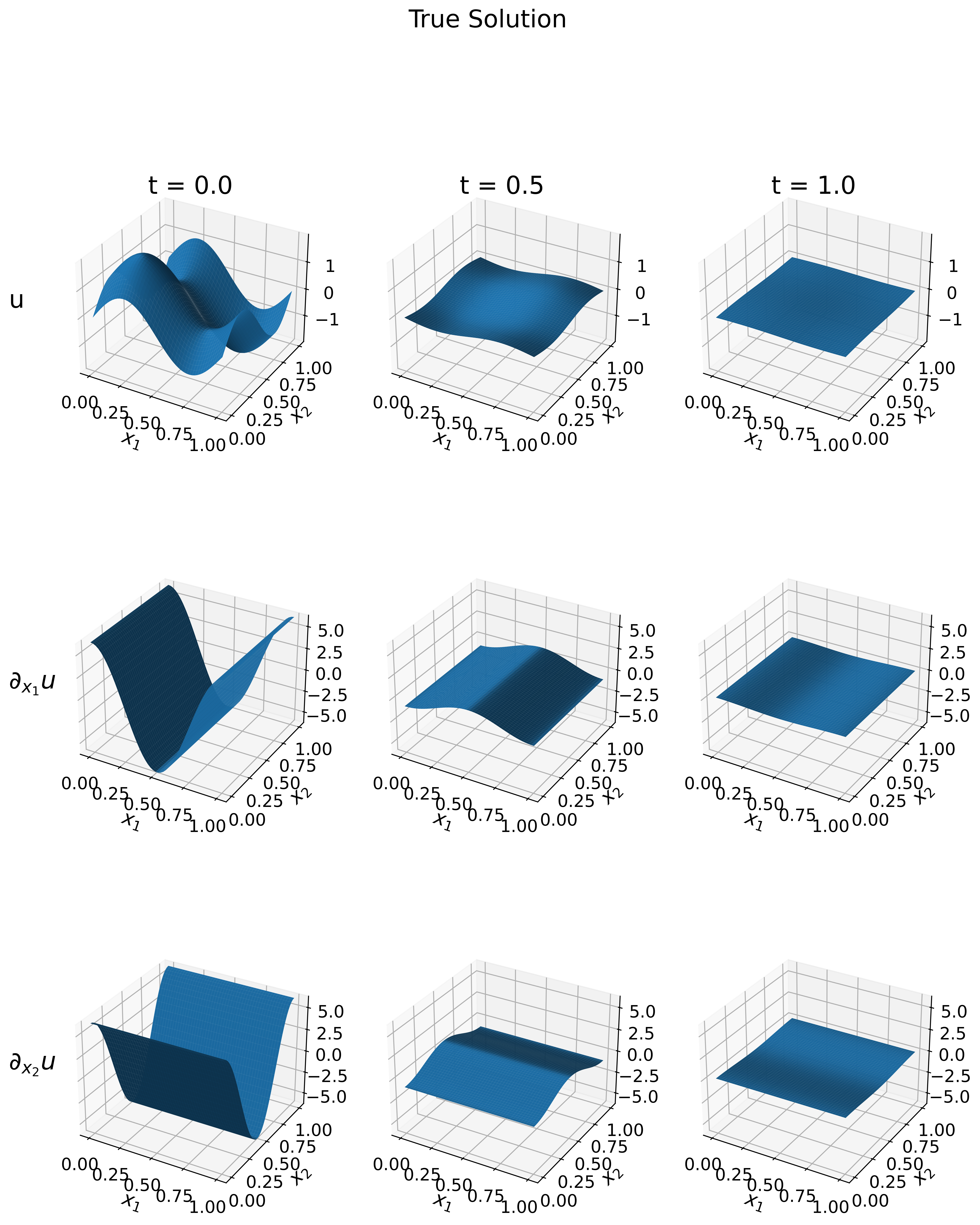}\,
	\includegraphics[width=0.45\linewidth, trim={2 0 0 0},clip=True]{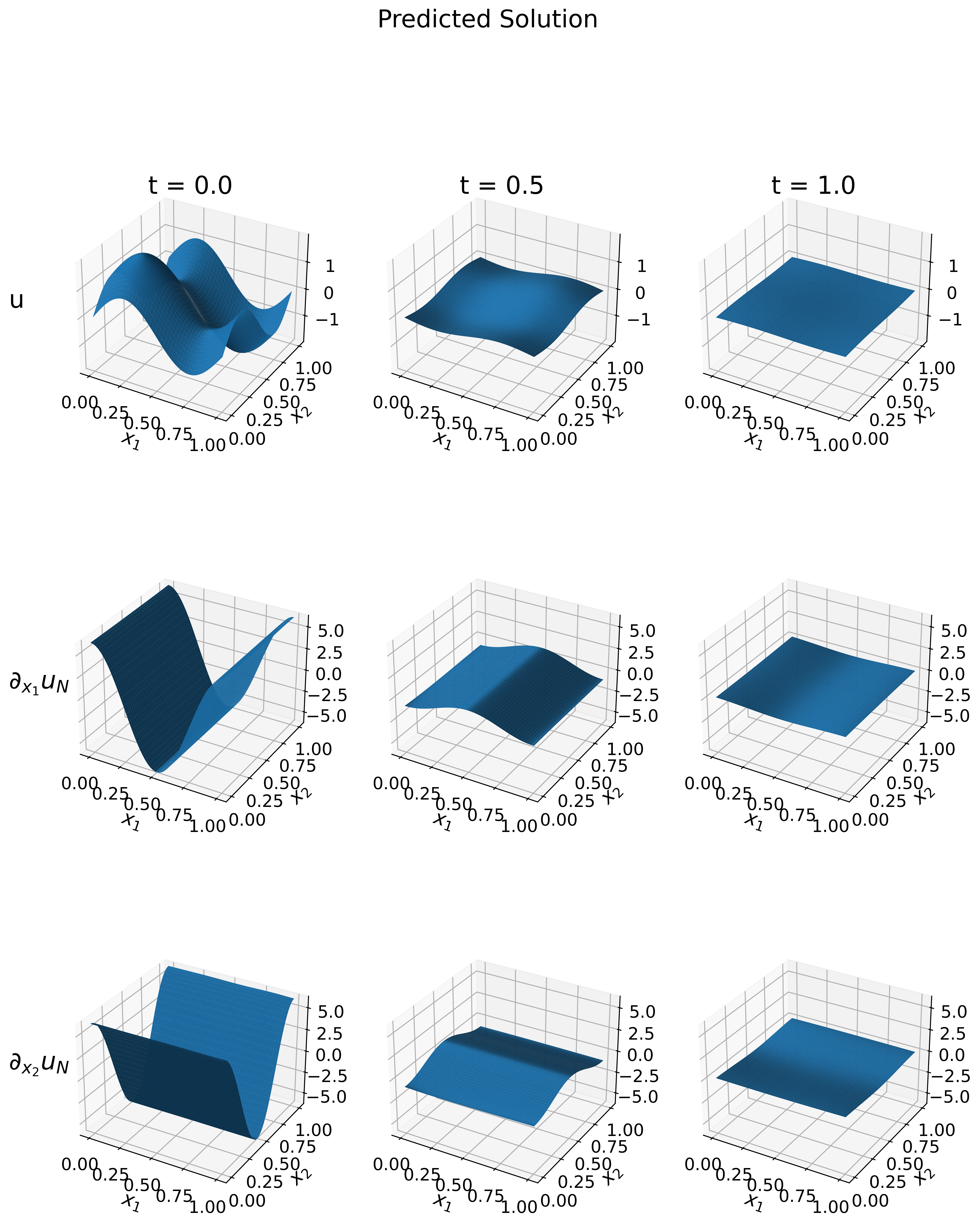}
		\caption{(Left) Comparison between the true solution from equation \eqref{eq:per_true_sol} (left) and the PINN approximation (right) at $t =  0, 0.5$ and $1$.}
		\label{fig:truevspredict}
\end{figure}

Figure \ref{fig:abserror} presents the absolute error between the true solution and the PINN approximation at $t = 0,\,0.5$ and $1$. The error distribution highlights regions where the approximation deviates from the exact solution, providing insight into the model’s accuracy.

\begin{figure}[h!]
	\includegraphics[width=0.45\linewidth]{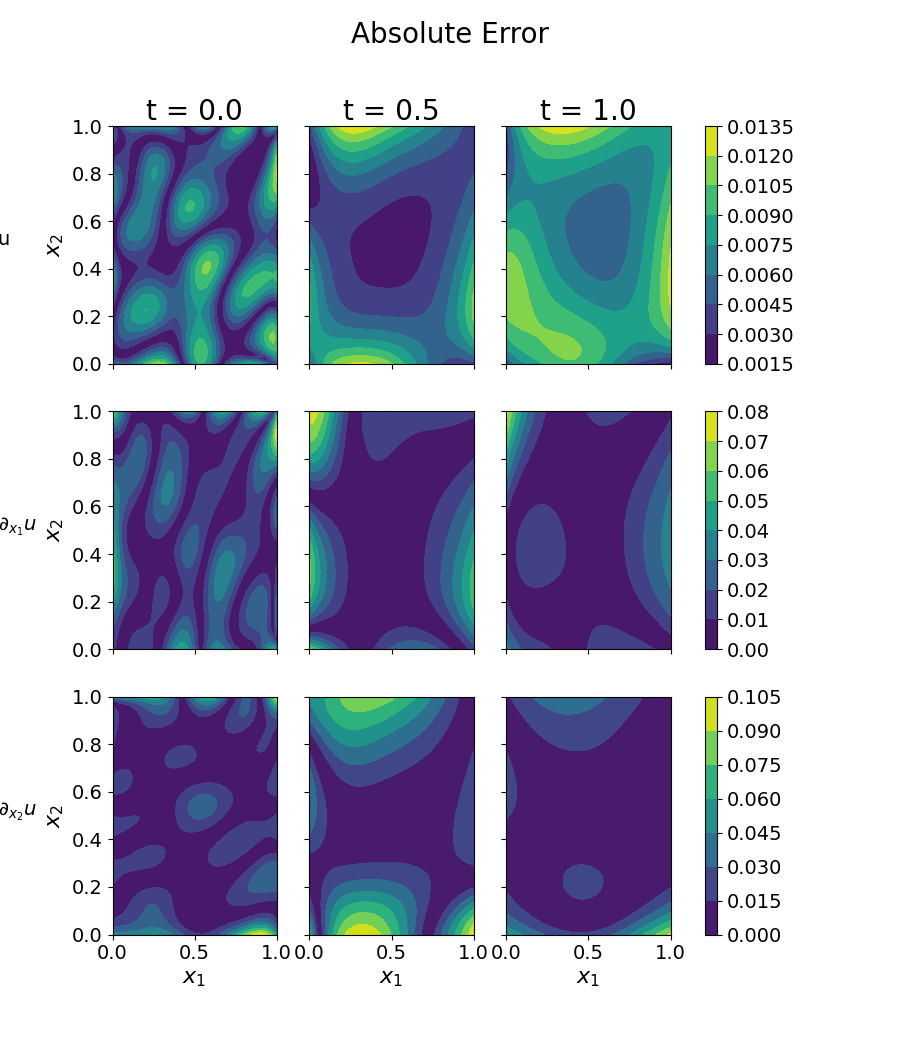}
	\caption{Absolute error between the true solution from equation \eqref{eq:per_true_sol} and the PINN approximation at $t =  0, 0.5$ and $1$.}
	\label{fig:abserror}
	\end{figure}
	
Figure \ref{fig:H1vsres} depicts the $H^1$ error (dashed line) and the residual loss (solid line) as functions of time. While the exact value of the constant $C$ in Theorem~\ref{thm:err-aux-1} is unknown, the figure suggests that the two quantities are not significantly different, aligning with the behavior described in the theorem.
\begin{figure}[h!]
	\centering
	\begin{minipage}{0.45\linewidth}
		\centering
		\pgfplotstabletypeset[
		col sep=comma,
		header=true,
		every head row/.style={before row=\hline, after row=\hline},
		every last row/.style={after row=\hline}
		]{figures/nonstat/H1_error_residual.csv}
	\end{minipage}%
	\hspace{1cm} 
	\begin{minipage}{0.45\linewidth}
		\centering
		\includegraphics[width=\linewidth]{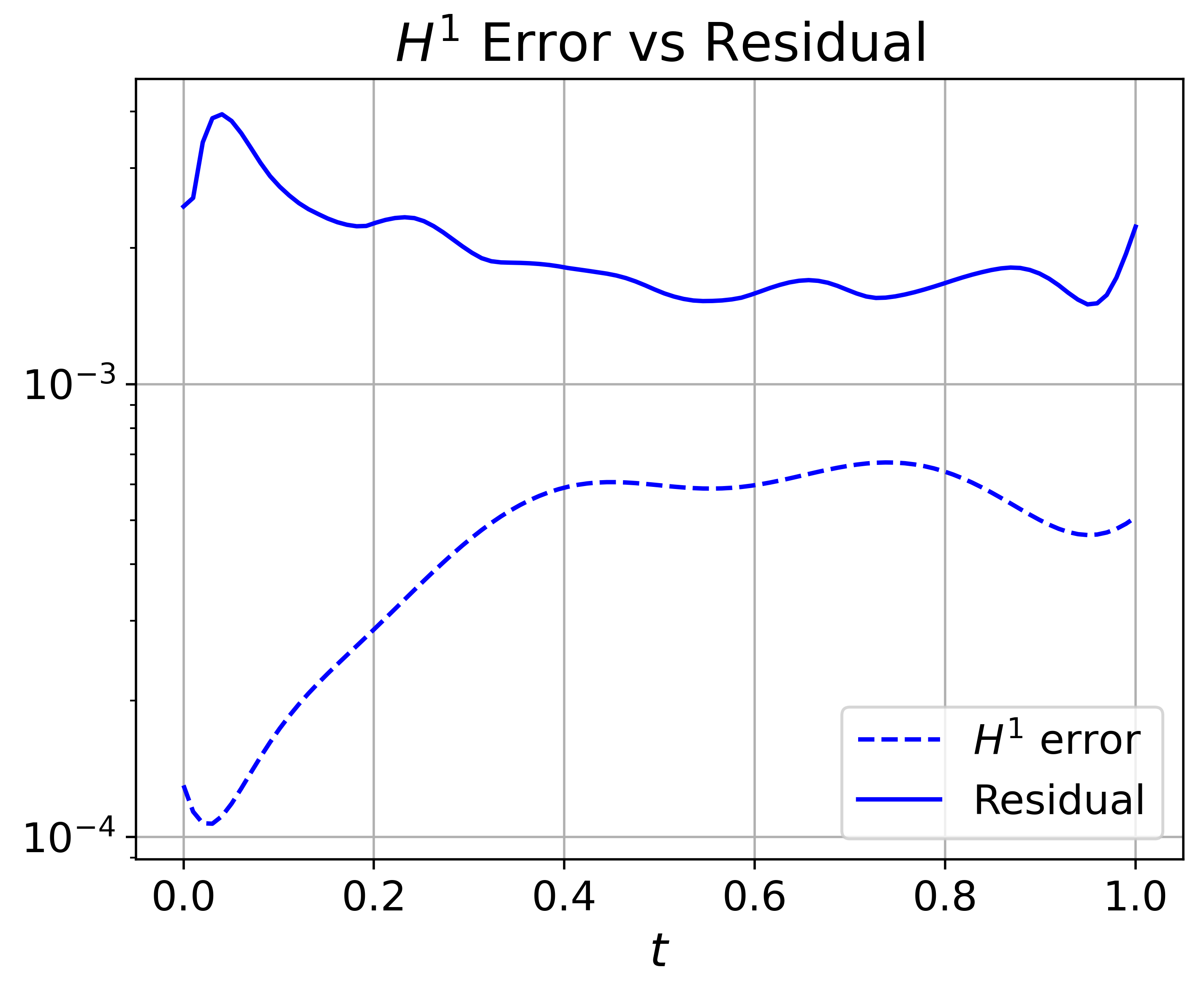}
	\end{minipage}
	\caption{$H^1$ error between the true solution from equation \eqref{eq:per_true_sol} and the PINN approximation, along with the residual loss as a function of time, presented in a table (left) and as a figure (right).}
	\label{fig:H1vsres}
\end{figure}


\bibliographystyle{amsplain}
\bibliography{References}

\providecommand{\bysame}{\leavevmode\hbox to3em{\hrulefill}\thinspace}
\providecommand{\MR}{\relax\ifhmode\unskip\space\fi MR }
\providecommand{\MRhref}[2]{%
  \href{http://www.ams.org/mathscinet-getitem?mr=#1}{#2}
}
\providecommand{\href}[2]{#2}
\begin{thebibliography}{10}

\bibitem{Agmon10}
S.~Agmon, \emph{Lectures on elliptic boundary value problems}, AMS Chelsea
  Publishing, Providence, RI, 2010, Prepared for publication by B. Frank Jones,
  Jr. with the assistance of George W. Batten, Jr., Revised edition of the 1965
  original. \MR{2589244}

\bibitem{AKR}
W.~Akram, \emph{Feedback stabilization and finite element error analysis of
  viscous burgers equation around non-constant steady state}, arXiv:2406.01553
  (2024).

\bibitem{BDDM}
A.~Bensoussan, G.~D. Prato, M.~C. Delfour, and S.~K. Mitter,
  \emph{Representation and control of infinite dimensional systems}, second
  ed., Systems \& Control: Foundations \& Applications, Birkh\"{a}user Boston,
  Inc., Boston, MA, 2007. \MR{2273323}

\bibitem{BiswasDNN-NSE}
A.~Biswas, J.~Tian, and S.~Ulusoy, \emph{Error estimates for deep learning
  methods in fluid dynamics}, Numer. Math. \textbf{151} (2022), no.~3,
  753--777. \MR{4444841}

\bibitem{BonIMA}
A.~Bonito, R.~DeVore, G.~Petrova, and J.~W. Siegel, \emph{Convergence and error
  control of consistent pinns for elliptic pdes}, IMA J. Numer. Anal.
  \textbf{00} (2025), no.~0, 1--59.

\bibitem{Cald81}
J.~Caldwell, P.~Wanless, and A.~E. Cook, \emph{A finite element approach to
  {B}urgers' equation}, Appl. Math. Modelling \textbf{5} (1981), no.~3,
  189--193. \MR{626869}

\bibitem{Canon99}
J.~R. Cannon, R.~E. Ewing, Y.~He, and Y.~Lin, \emph{A modified nonlinear
  {G}alerkin method for the viscoelastic fluid motion equations}, Internat. J.
  Engrg. Sci. \textbf{37} (1999), no.~13, 1643--1662. \MR{1711006}

\bibitem{YChen19}
Y.~Chen and T.~Zhang, \emph{A weak {G}alerkin finite element method for
  {B}urgers' equation}, J. Comput. Appl. Math. \textbf{348} (2019), 103--119.
  \MR{3886664}

\bibitem{Dogan04}
A.~Dogan, \emph{A {G}alerkin finite element approach to {B}urgers' equation},
  Appl. Math. Comput. \textbf{157} (2004), no.~2, 331--346. \MR{2088257}

\bibitem{DragSS}
S.~S. Dragomir, \emph{Some gronwall type inequalities and applications},
  Science Direct Working Paper No S1574-0358(04)70847-3, Available at SSRN:
  https://ssrn.com/abstract=3158353 \textbf{187} (2003), 1--197.

\bibitem{Eva}
L.~C. Evans, \emph{Partial differential equations}, second ed., Graduate
  Studies in Mathematics, vol.~19, American Mathematical Society, Providence,
  RI, 2010. \MR{2597943}

\bibitem{GKLSFEMPINN}
T.~G. Grossmann, U.~J. Komorowska, J.~Latz, and C.-B. Sch\"onlieb, \emph{Can
  physics-informed neural networks beat the finite element method?}, IMA J.
  Appl. Math. \textbf{89} (2024), no.~1, 143--174. \MR{4764284}

\bibitem{HighhamDNN}
C.~F. Higham and D.~J. Higham, \emph{Deep learning: an introduction for applied
  mathematicians}, SIAM Rev. \textbf{61} (2019), no.~4, 860--891. \MR{4027841}

\bibitem{KatoLpNSE}
T.~Kato, \emph{Strong {$L\sp{p}$}-solutions of the {N}avier-{S}tokes equation
  in {${\bf R}\sp{m}$}, with applications to weak solutions}, Math. Z.
  \textbf{187} (1984), no.~4, 471--480. \MR{760047}

\bibitem{Kes}
S.~Kesavan, \emph{Topics in functional analysis and applications}, John Wiley
  \& Sons, Inc., New York, 1989. \MR{990018}

\bibitem{AKMTMRRB}
A.~Khan, M.~T. Mohan, and R.~Ruiz-Baier, \emph{Conforming, nonconforming and
  {DG} methods for the stationary generalized {B}urgers-{H}uxley equation}, J.
  Sci. Comput. \textbf{88} (2021), no.~3, Paper No. 52, 26. \MR{4288303}

\bibitem{Lady68}
O.~A. Lady\v{z}enskaja, V.~A. Solonnikov, and N.~N. Ural\c{p}rime~ceva,
  \emph{Linear and quasilinear equations of parabolic type}, Translations of
  Mathematical Monographs, vol. Vol. 23, American Mathematical Society,
  Providence, RI, 1968, Translated from the Russian by S. Smith. \MR{241822}

\bibitem{LioMag-V2}
J.-L. Lions and E.~Magenes, \emph{Non-homogeneous boundary value problems and
  applications. {V}ol. {I, II}}, Die Grundlehren der mathematischen
  Wissenschaften, vol. Band 182, Springer-Verlag, New York-Heidelberg, 1972.
  \MR{350178}

\bibitem{lulu-rar}
Q.~Lou, X.~Meng, and G.~E. Karniadakis, \emph{Physics-informed neural networks
  for solving forward and inverse flow problems via the boltzmann-bgk
  formulation}, Journal of Computational Physics \textbf{447} (2021), 110676.

\bibitem{lu2021deepxde}
L.~Lu, X.~Meng, Z.~Mao, and G.~E. Karniadakis, \emph{{DeepXDE}: A deep learning
  library for solving differential equations}, SIAM Review \textbf{63} (2021),
  no.~1, 208--228.

\bibitem{AKPNN08}
P.~S. Mantri, N.~Nataraj, and A.~K. Pani, \emph{A qualocation method for
  {B}urgers' equation}, J. Comput. Appl. Math. \textbf{213} (2008), no.~1,
  1--13. \MR{2382703}

\bibitem{McLean-2000}
W.~McLean, \emph{Strongly elliptic systems and boundary integral equations},
  Cambridge University Press, Cambridge, 2000. \MR{1742312}

\bibitem{MTM21-LpSol}
M.~T. Mohan, \emph{{$\Bbb L^p$}-solutions of deterministic and stochastic
  convective {B}rinkman-{F}orchheimer equations}, Anal. Math. Phys. \textbf{11}
  (2021), no.~4, Paper No. 164, 33. \MR{4314117}

\bibitem{MTMAK21}
M.~T. Mohan and A.~Khan, \emph{On the generalized {B}urgers-{H}uxley equation:
  existence, uniqueness, regularity, global attractors and numerical studies},
  Discrete Contin. Dyn. Syst. Ser. B \textbf{26} (2021), no.~7, 3943--3988.
  \MR{4251864}

\bibitem{Nirenberg1959}
L.~Nirenberg, \emph{On elliptic partial differential equations}, Ann. Scuola
  Norm. Sup. Pisa Cl. Sci. (3) \textbf{13} (1959), 115--162. \MR{109940}

\bibitem{Edu25JCAM}
J.~Novo and E.~Terr\'es, \emph{Can neural networks learn finite elements?}, J.
  Comput. Appl. Math. \textbf{453} (2025), Paper No. 116168, 8. \MR{4781049}

\bibitem{PanyNN07}
A.~K. Pany, N.~Nataraj, and S.~Singh, \emph{A new mixed finite element method
  for {B}urgers' equation}, J. Appl. Math. Comput. \textbf{23} (2007), no.~1-2,
  43--55. \MR{2282449}

\bibitem{KDKS19}
M.~Raissi, P.~Perdikaris, and G.~E. Karniadakis, \emph{Physics-informed neural
  networks: a deep learning framework for solving forward and inverse problems
  involving nonlinear partial differential equations}, J. Comput. Phys.
  \textbf{378} (2019), 686--707. \MR{3881695}

\bibitem{RROG}
M.~Renardy and R.~C. Rogers, \emph{An introduction to partial differential
  equations}, Springer-Verlag, New York \textbf{13} (2004). \MR{2028503}

\bibitem{SMishIMA}
T.~D. Ryck, A.~D. Jagtap, and S.~Mishra, \emph{Error estimates for
  physics-informed neural networks approximating the {N}avier-{S}tokes
  equations}, IMA J. Numer. Anal. \textbf{44} (2024), no.~1, 83--119.
  \MR{4699575}

\bibitem{SMishMC}
T.~D. Ryck and S.~Mishra, \emph{Error analysis for deep neural network
  approximations of parametric hyperbolic conservation laws}, Math. Comp.
  \textbf{93} (2024), no.~350, 2643--2677. \MR{4780341}

\bibitem{SMishACT}
\bysame, \emph{Numerical analysis of physics-informed neural networks and
  related models in physics-informed machine learning}, Acta Numer. \textbf{33}
  (2024), 633--713. \MR{4793683}

\bibitem{TemamNSE01}
R.~Temam, \emph{Navier-{S}tokes equations}, AMS Chelsea Publishing, Providence,
  RI, 2001, Theory and numerical analysis, Reprint of the 1984 edition.
  \MR{1846644}

\bibitem{XieDNN_EA_CAMWA}
T.~Xie and F.~Cao, \emph{The errors of simultaneous approximation of
  multivariate functions by neural networks}, Comput. Math. Appl. \textbf{61}
  (2011), no.~10, 3146--3152. \MR{2799839}

\end{thebibliography}

\end{document}